\newtheorem{theorem}{Theorem}
\newtheorem{corollary}[theorem]{Corollary}
\newtheorem{definition}[theorem]{Definition}
\newtheorem{lemma}[theorem]{Lemma}
\newtheorem{proposition}[theorem]{Proposition}
\newtheorem{remark}[theorem]{Remark}
\subjclass[2010]{ 37H99 ;37C30; 86A10  ; 65G30	 }
\keywords{Linear response, random dynamical system, ENSO, rotation number, Arnold map}
\begin{document}
\title[Rotation number in Arnold maps with noise]{Arnold maps with noise:
Differentiability and non-monotonicity of the rotation number}
\author{L. Marangio}
\address{Femto-ST Institute, Universit\'e de Bourgogne Franche-Comt\'e ,
21000 Dijon, France}
\email{lmarangio@gmail.com}
\author{J. Sedro}
\address{Laboratoire de Math\'ematiques d'Orsay, Univ. Paris-Sud, CNRS,
Universit\'e Paris-Saclay, 91405 Orsay, France (Current address: Laboratoire
de Probabilit\'es, Statistique et Mod\'elisation (LPSM), Sorbonne
Universit\'e, Universit\'e de Paris, 4 Place Jussieu, 75005 Paris, France.)}
\email{sedro@lpsm.paris}
\author{S. Galatolo}
\address{Dipartimento di Matematica, Universit\`a di Pisa, Largo Bruno
Pontecorvo 5, 56127 Pisa, Italy}
\email{stefano.galatolo@unipi.it}
\urladdr{http://pagine.dm.unipi.it/~a080288/}
\author{A. Di Garbo}
\address{Consiglio Nazionale delle Ricerche, Istituto di Biofisica Unit\'a
Operativa di Pisa Via G. Moruzzi 1. 56124 Pisa, Italy 
${{}^\circ}$
1 Pisa Italia}
\email{angelo.digarbo@pi.ibf.cnr.it}
\author{M. Ghil }
\address{Geosciences Department and Laboratoire de M\'et\'eorologie
Dynamique (CNRS and IPSL), Ecole Normale Sup\'erieure and PSL Research
University, Paris, France, and Department of Atmospheric and Oceanic
Sciences, University of California at Los Angeles, Los Angeles, California,
USA }
\email{ghil@lmd.ens.fr}
\date{\today }

\begin{abstract}
Arnold's standard circle maps are widely used to study the quasi-periodic
route to chaos and other phenomena associated with nonlinear dynamics in the
presence of two rationally unrelated periodicities. In particular, the El
Ni\~no--Southern Oscillation (ENSO) phenomenon is a crucial component of
climate variability on interannual time scales and it is dominated by the
seasonal cycle, on the one hand, and an intrinsic oscillatory instability
with a period of a few years, on the other. The role of meteorological
phenomena on much shorter time scales, such as westerly wind bursts, has
also been recognized and modeled as additive noise.

We consider herein Arnold maps with additive, uniformly distributed noise.
When the map's nonlinear term, scaled by the parameter $\epsilon$, is
sufficiently small, i.e. {$\epsilon < 1$}, the map is known to be a
diffeomorphism and the rotation number $\rho_{\omega}$ is a differentiable
function of the driving frequency $\omega$.

We concentrate on the rotation number's behavior as the nonlinearity becomes
large, and show rigorously that $\rho _{\omega }$ is a differentiable
function of $\omega $, even for $\epsilon \geq 1$, at every point at which
the noise-perturbed map is mixing. We also provide a formula for the
derivative of the rotation number. The reasoning relies on linear-response
theory and a computer-aided proof. In the diffeomorphism case of {\ $%
\epsilon <1$,} the rotation number $\rho _{\omega }$ behaves monotonically
with respect to $\omega $. We show, using again a computer-aided proof, that
this is not the case when $\epsilon \geq 1$ and the map is not a
diffeomorphism. This lack of monotonicity for large nonlinearity could be of
interest in some applications. For instance, when the devil's staircase $%
\rho =\rho (\omega )$ loses its monotonicity, frequency locking to the same
periodicity could occur for non-contiguous parameter values that might even
lie relatively far apart from each other.
\end{abstract}

\maketitle
\tableofcontents

\section{Introduction and motivation}

The motivation of the present work is to provide further physical and
mathematical insights into the behavior of the El Ni\~no--Southern
Oscillation (ENSO) phenomenon. ENSO is a dominant component of the climate
system's variability on the time scale of several seasons to several years 
\cite{Neel+98,Phil90} and its accurate prediction for 6--12 months ahead is
of great socio-economic importance \cite{Barn12, GJ98, Latif94}.

\noindent \textit{Arnold map with noise as a climate toy model.} The model
studied herein is a highly idealized one that captures, however, two key
features of interest of the ENSO phenomenon, namely frequency locking and
high irregularity. 
Frequency-locking behavior has been observed in many fields of physics in
general \cite{Bak86, BB82, FKS82} and in several ENSO models in particular 
\cite{Chang96, GZT08, JNG94, JNG96, SG01, Tzip94, Tzip95}. For instance,
some early coupled ocean--atmosphere models that attempted to simulate and
predict ENSO were locked into a two-year or a three-year cycle. Clearly, it
is difficult to predict large El Ni\~{n}o events in the Eastern Tropical
Pacific --- which occur irregularly, every 2--7 years --- with a model that
has a persistent, stable periodicity of two or three years \cite{GR00}.

Frequency-locking, also called mode-locking, is due to nonlinear interaction
between an internal frequency $\omega_{\mathrm{i}}$ of the system and an
external frequency $\omega_{\mathrm{e}}$. In the ENSO case, the external
periodicity is the seasonal cycle, while the internal periodicity is
associated with an oscillatory instability that has been studied extensively
in the absence of the seasonal cycle \cite[and references therein]{Neel+98}.
A simple model for systems with two competing periodicities is the
well-known standard circle map \cite{Arn83}. This map is often called the
Arnold map and it is given by Eq.~\eqref{map} at the beginning of the next
section.

\noindent \textit{Strong nonlinearity.} The deterministic Arnold circle map $%
T_{\tau ,\epsilon }:S^{1}\rightarrow S^{1}$ is defined by 
\begin{equation}
T_{\tau ,\epsilon }(x):=x+\frac{2\pi }{\omega }-\dfrac{\epsilon }{2\pi }\sin
(2\pi x)\mod 1\,.  \label{Arn}
\end{equation}%
In the absence of nonlinear effects ($\epsilon =0$) the behavior of the
deterministic map in Eq.~\ref{Arn} is relatively simple: either the driving
frequency $\omega $ is rational, $\omega =p/q$ with $(p,q)$ integers, and
the dynamics is periodic with period $p$; or $\omega $ is irrational and the
iterates \{$X_{n}$\} of the map fill the whole circle $S^{1}$ densely. For
small nonlinearity, $\epsilon \ll 1$, narrow Arnold tongues develop out of
each rational-periodicity point $(\omega =p/q,\epsilon =0)$. Each such
tongue increases in width with increasing $\epsilon $, while the periodicity
within it stays equal to $p$; see Fig.~\ref{fig:tongues}. As $\epsilon $
exceeds the value $1.0$, the Arnold tongues overlap, and chaotic behavior
sets in \cite{Arn83}.

\begin{figure}[h]
\centering
\includegraphics[width=0.99\textwidth]{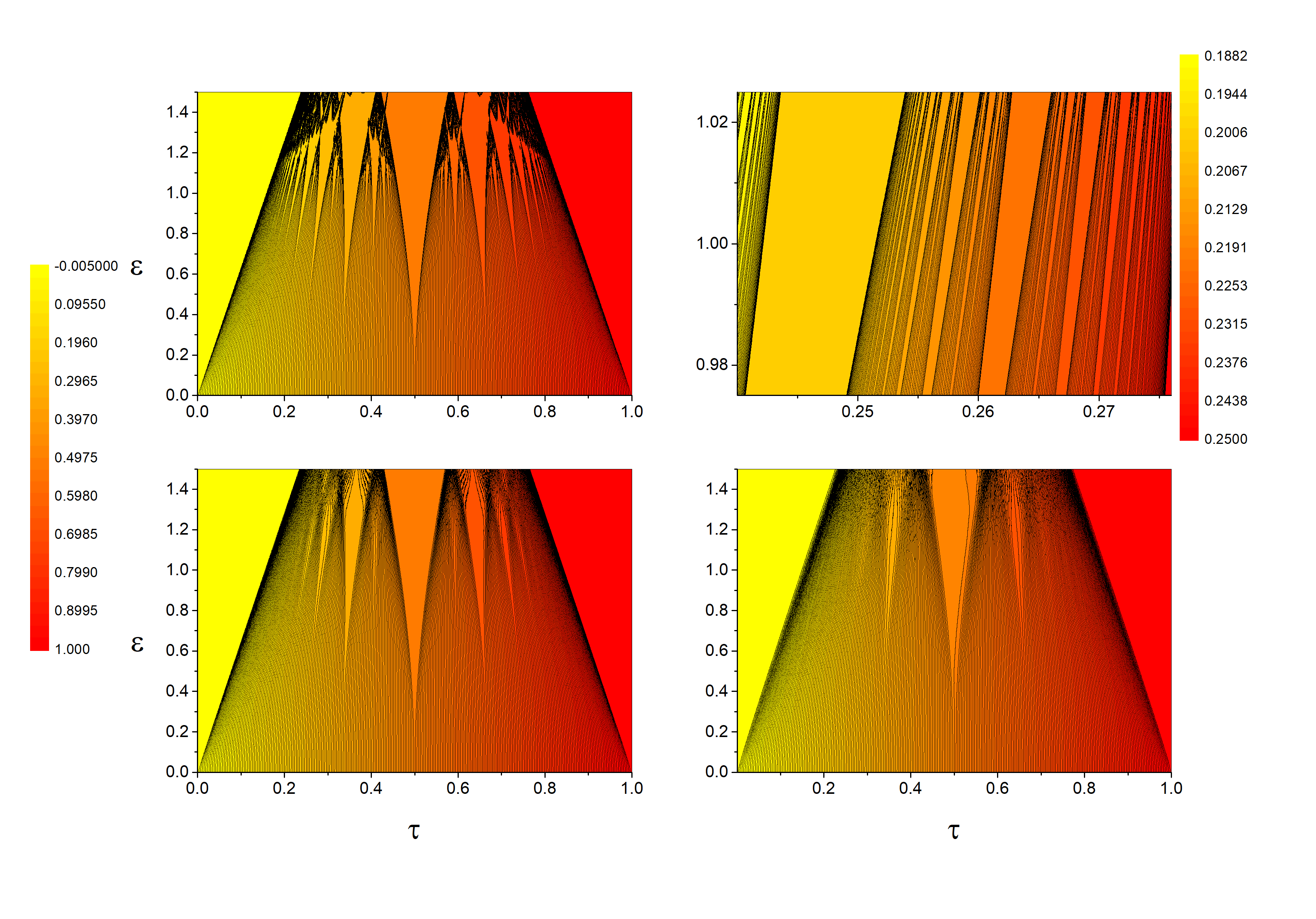}
\caption{Contour maps of the {\ rotation number $\protect\rho = \protect\rho_%
\protect\tau$ in the parameter space $(\protect\tau,\protect\epsilon)$,
where $\protect\tau = 2\protect\pi/\protect\omega$. Each $\protect\rho_%
\protect\tau$ value} was estimated by performing {\ 10~000} iterations of
the map {\ given by Eq.~\eqref{map} in Section~\protect\ref{ssec:model}
below. Top panels: noise amplitude $\protect\xi=0$, with a blow-up near the
critical line $\protect\epsilon=1$ at the right. Bottom panels: Noise
amplitude $\protect\xi=0.02$ (left) and $\protect\xi=0.05$ (right). All
numerical data were obtained for 4~000 values} of $\protect\tau$ and 200 of $%
\protect\epsilon$, uniformly distributed in the corresponding intervals.}
\label{fig:tongues}
\end{figure}

The observed irregular behavior of ENSO argues strongly for large
nonlinearities being active and giving rise to chaotic behavior \cite{GCS08}%
. One such numerical result is illustrated by the ``Devil's bleachers'' in
Fig.~\ref{fig:bleacher}. 
\begin{figure}[h]
\centering
\includegraphics[width=0.7\textwidth,scale=1]{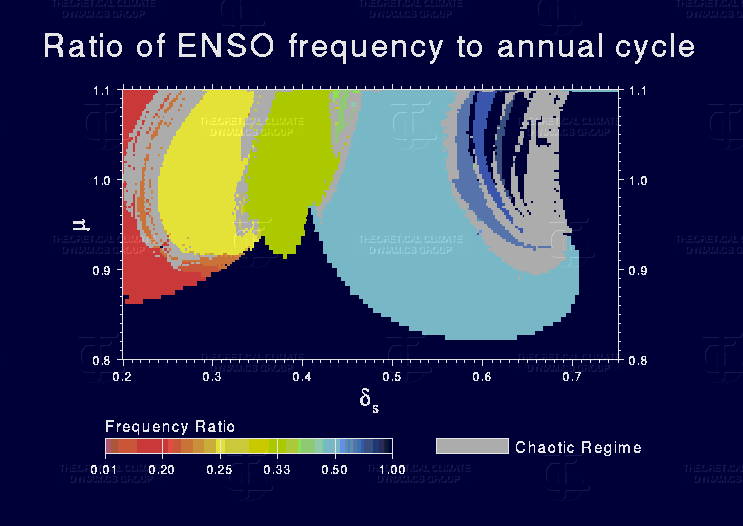}
\caption{Devil's bleachers from a relatively simple ENSO model based on the
coupling of a partial differential equation for the ocean with an integral
equation for the atmosphere \protect\cite{JNG94, JNG96}. The two parameters
are $\protect\delta_{\mathrm{s}}$ on the abscissa and $\protect\mu$ on the
ordinate: $\protect\delta_{\mathrm{s}}$ is a mixed ocean layer parameter
that determines the periodicity of the internal oscillation, while $\protect%
\mu$ corresponds to the strength of the local coupling between the
atmosphere and ocean. The colors identify large areas of ENSO locking to
1--5 years, while gray identifies chaotic behavior; see color bar. {\
Courtesy of Fei-Fei Jin and based on numerical results of \protect\cite%
{JNG96}.}}
\label{fig:bleacher}
\end{figure}

\noindent \textit{Noise sources in ENSO modeling.} In climate modeling in
general, variability on smaller scales in time and space is increasingly
modeled as random. The so-called parametrization --- i.e., large-scale
representation of net effects --- of subgrid-scale phenomena plays an
increasing role in refining the most detailed and highly resolved climate
models \cite{PalWil09}. More specifically, the role of westerly wind bursts
in the onset of El Ni\~{n}os has been studied more and more intensively \cite%
{Ver98}. These wind bursts over the western Tropical Pacific are at least
correlated with and possibly causal to warm events in the eastern Tropical
Pacific \cite{EYuTz05}.

Timmermann and Jin \cite{TJ02} have included a stochastic process meant to
represent these wind bursts into a low-order ENSO model and shown that it
contributes to the irregular occurrence of the model's warm events. This
stochastically perturbed ENSO model has been further studied in \cite{CSG11}%
, where its random attractor has been computed. Using an additive stochastic
process in the toy model studied herein seems therewith amply justified.

{\ In the present paper, we will formulate and apply techniques allowing one
to obtain results on the rotation number of the strongly nonlinear Arnold
circle map in the presence of additive noise. No claim is made that this
results obtained herein could be applied directly to high-end climate
models. It is common, though, in the climate sciences to deal with a full
hierarchy of models \cite{Ghil.2001, Held.2005, Sch.Dick.1974}--- from the
Arnol'd circle map, through low-order systems of ordinary differential
equations \cite{Lor63a},\cite{Lor63b}, \cite{Jal95}, \cite{GhilDCDS}, \cite%
{Pierinial} and on to intermediate models governed by partial differential
equations in one or two space dimensions, all the way to fully
three-dimensional and highly detailed general circulation models. Moreover,
it is more and more the case that the simulations of intermediate and
high-end models are analyzed by data-driven methods to yield
stochastic-dynamic models reproducing their main properties \cite{MCP.1996,
MSM.2015}. It is to these models that advanced mathematical techniques, like
the ones proposed in this paper, are then to be applied. }

As we shall see forthwith, a key tool of our approach is linear response
theory, which has already been applied fairly widely in the climate sciences
(\cite{Luu},\cite{GhLu},\cite{HM}). From the theoretical point of view
linear response theory was proved to apply generally in random systems in
which the presence of noise plays has a regulatizing effect (\cite{HM},\cite%
{GG}). We can expect, therefore, that several of the general ideas presented
herein can be applied to low-order and intermediate climate models, in the
presence of additive noise.


\noindent \textit{Linear response and stability of the statistical
properties.} In the following sections, we study the behavior of the
statistical properties of Arnold maps with additive noise and strong
nonlinearity. In particular, we study the behavior of their rotation number.
We are interested in the smoothness of the rotation number $\rho $ as a
function of the driving frequency $\omega $ and in its monotonicity
properties. We will see that, in the strong nonlinearity case $\epsilon \geq
1$, $\rho =\rho (\omega )$ still varies smoothly, but it is not monotonic
anymore, as it is in the weakly nonlinear case $\epsilon <1$.

Our findings rely both on general mathematical results and on rigorous
computer-aided estimates. The smoothness of statistical properties of a
family of dynamical systems is often guaranteed by the fact that its
relevant stationary or invariant measure 
varies in a smooth way {with respect to changes in a control parameter 
of the system. This property} is called a \emph{linear response} of the
invariant measure under perturbation of the system. The system's linear
response with respect to a perturbation can be described by a suitable
derivative, \emph{representing the rate of change of the relevant (physical,
stationary) invariant measure of the system with respect to the perturbation}%
.

The behavior of the stationary measure of a random or deterministic
dynamical system under perturbations may be very different from system to
system. Some classes of systems have a smooth behavior, with respect to
suitable perturbations; this smoothness was proved for several classes of
deterministic systems. Starting with the work of Ruelle (see \cite{R}) which
proved this for uniformly hyperbolic systems, similar results have been
proved in some cases for non-uniformly expanding or hyperbolic ones (see
e.g. \cite{BahSau, Ba1, BT, BaSma,BKL, D, K,zz}). On the other hand, it is
known that linear response does not always hold, due to the lack of
regularity of the system or of the perturbation or to insufficient
hyperbolicity; see \cite{Ba1, BBS, Gpre, zz}. The survey paper \cite{BB} has
an exhaustive list of classical references on the subject.

An example of linear response for small random perturbations of
deterministic systems appears in \cite{Li2}. Results for random systems were
proved in \cite{HM}, {where} {the} technical framework {was} adapted to
stochastic differential equations and in \cite{BRS}, where the authors
consider random compositions of expanding or non-uniformly expanding maps.
Rigorous numerical approaches for the computation of the linear response are
available, to some extent, both for deterministic and random systems (see 
\cite{BGNN, PV}).

In recent work \cite{GG}, it was shown that, in the presence of additive
noise, one can expect linear response, even for maps which are not expanding
or hyperbolic. The Arnold maps with noise and strong nonlinearity and the
kinds of perturbations we consider herein fit into this framework. In this
paper, we will adapt the results of \cite{GG} to prove linear response for
this class of systems and perturbations, along with smoothness of the
rotation number. We remark that the results of \cite{GG} are not directly
applicable to the kind of perturbations we need to consider here because
these perturbations change the critical values of the deterministic part of
the dynamics. In Section \ref{sec33} we show how to deal with these
perturbations.

\noindent \textit{Computer-aided estimates.} Some of the results we present
have been obtained with the help of computer-aided estimates. These
estimates are obtained using suitable numerical software that tracks the
possible truncation and numerical errors during the computation. The output
of the computation is then an interval containing in a certified way the
result that was meant to be estimated, e.g. \textquotedblleft the rotation
number $\rho $ of the given system is contained in the interval $%
[0.556,0.566]$". Such rigorous computations can be implemented by suitable
numerical methods and libraries, the results can be considered as statements
proved by a computer-aided estimate; see, for instance, the book \cite{Tuc}
for an introduction to the subject.

In this paper we will make use of the software and the methods developed in 
\cite{GMN} for dynamical systems on the interval with additive noise.%
\footnote{\label{notacode}The code used in this paper is at %
\url{https://bitbucket.org/luigimarangio/arnold_map/} .}

The software will be used in this work for two purposes:

\begin{enumerate}
\item computing the stationary measure of a given Arnold map with noise to
within a small, explicit error in the $L^{1}$-norm.

\item computing such a{\ system's mixing rate. The mixing rate} is measured
by the norm of the iterates of the transfer operator associated with the
system, restricted to the space of measures having zero average on the phase
space $S^{1}$.
\end{enumerate}

Both of these computations are made possible by a kind of finite-element
approximation of the transfer operator, used in combination with
quantitative functional analytic stability statements that estimate
explicitly, and not asymptotically the approximation errors made in the
finite element reduction (as explained in \cite{GMN}). Further details on
this matter will be given in Sections \ref{exp1} and \ref{exp2}, where we
also show the results of the computer-aided estimates we use herein.

\noindent \textit{Plan of the paper. } The paper is structured as follows:

In Section \ref{sec2} we introduce the Arnold maps with noise that we study
and the questions which are investigated in the paper. We also state
informally the {\ paper's main results.} 

In Section \ref{sec33} we outline a general linear-response statement, and
adapt it to the kind of systems and perturbations we investigate in the
paper. This will be the core tool to show that the rotation number varies
smoothly even in the strong nonlinear case $\epsilon \geq 1$.

The application of the theory built in Section \ref{sec33} requires some
quantitative estimate on the {system's mixing rate.} In Section \ref%
{comptool}, we prove the quantitative stability results that are required to
support a computer-aided estimation of the {\ system's mixing rate} and show
the result of such computer-aided estimates.

{\ Further computer-aided estimates in Section ~\ref{exp2} yield rigorous
non-monotonicity results for} the rotation number in the strong nonlinearity
case.

\section{Mode locking in the presence of noise and our main results\label%
{sec2}}

In this section we introduce more precisely the systems and the problems
being studied in the paper. We also state informally the main results of the
paper and summarize the findings of the paper \cite{ZH07}, which could be
used to prove the smoothness of the rotation number in the weak nonlinearity
case $\epsilon <1$, but cannot be used in the case $\epsilon \geq 1$.

\subsection{Model formulation and questions investigated\label{ssec:model}}

The system we study is the stochastically perturbed Arnold circle map, where
the usual, deterministic circle map $T_{\tau ,\epsilon }:S^{1}\rightarrow
S^{1}$ is defined by 
\begin{equation}
T_{\tau ,\epsilon }(x):=x+\tau -\dfrac{\epsilon }{2\pi }\sin (2\pi x)\mod 1%
\,.  \label{map}
\end{equation}%
Here $\tau :=2\pi /\omega $ and $\omega :=\omega _{\mathrm{i}}/\omega _{%
\mathrm{e}}$ {\ is the driving frequency,} 
while $\epsilon \geq 0$ parameterizes the magnitude of nonlinear effects. In
other situations, where external driving is replaced by genuine coupling
between two oscillators, one also refers to $\epsilon $ as the coupling
parameter.

By Arnold map with additive noise we mean the stochastic process $%
\{X_{n}\}_{n\in \mathbb{N}}$ on $S^{1}$ defined by 
\begin{equation}
X_{n+1}=T_{\tau ,\epsilon }(X_{n})+\Omega _{n}\mod 1\,.  \label{proc}
\end{equation}%
At each iterate of \ $T_{\tau ,\epsilon }(x)$, an independent identically
distributed (i.i.d.) noise $\Omega _{n},$ that is uniformly distributed on $%
[-\xi /2,\xi /2],$ is added to the deterministic term on the right-hand side
of \eqref{proc}; in particular, the noise is independent of the point $%
X_{n}\in S^{1}$.

In the case $\epsilon =0,\xi =0$, the system is simply a rotation of the
circle. In the deterministic case, where $\xi =0$, and $\epsilon \in (0,1)$
we get the classical Arnold circle map, which is one of the simplest models
of coupled oscillators \cite%
{arnold1965,keener1984global,arnold1991cardiac,glass1991cardiac,GCS08}.
Outside climate science, in the context of cardiac dynamics the circle map
was employed as a model for cardiac arrhythmias \cite%
{arnold1991cardiac,glass1991cardiac} in which the irregular dynamics of
heart pumping is interpreted as arising from the competition of two
pacemakers. Similar studies were carried out in neurophysiology to
investigate the dynamical behavior of a neuron subject to periodic
stimulation \cite{keener1984global}. In addition, the Arnold circle map was
recently used as a model of the sleep-wake regulation cycle \cite%
{bailey2018circle}. In another study a chain of coupled Arnold circle maps
was employed to study the emergence of phase-locking patterns as a function
of the coupling \cite{batista2003mode}.

In the deterministic case, one of the most striking feature of this model is
the \emph{mode-locking} phenomenon: let us consider the rotation number. The
rotation number $\rho =\rho _{\tau }$ measures the average rotation per
iterate of \eqref{map} on $S^{1}$ and is defined as%
\begin{equation*}
\rho _{\tau }:=\lim_{n\rightarrow \infty }\frac{\tilde{T}_{\tau ,\epsilon
}^{n}(x)}{n}
\end{equation*}%
where $\tilde{T}_{\tau ,\epsilon }^{n}:\mathbb{R\rightarrow R}$ is the lift
of $T_{\tau ,\epsilon }$ to $\mathbb{R}$, defined by 
\begin{equation}
\tilde{T}_{\tau ,\epsilon }(x):=x+\tau -\frac{\epsilon }{2\pi }\sin (2\pi x).
\end{equation}

In this case we consider a system with additive noise as in $($\ref{proc}$)$
we define the rotation number by considering a stochastic process $\tilde{X} 
$ on $\mathbb{R}$ defined by $\tilde{X}_{n+1}=\tilde{T}_{\tau ,\epsilon }(%
\tilde{X}_{n})+\Omega _{n}$ and $\rho _{\tau }:=\lim_{n\rightarrow \infty }%
\frac{\tilde{X}_{n}}{n}$.

There are some general classical results concerning the properties of the
rotation number for deterministic and orientation-preserving diffeomorphisms
of the circle that is useful to recall here (further details can be found in 
\cite{herman1977,katok1995}): it is well known that the rotation number is
independent of $x\in S^{1}.$ A very important result is the Denjoy theorem
that states that an orientation-preserving diffeomorphism (for our maps $%
T_{\tau ,\epsilon }$ this corresponds to the weakly nonlinear case $\epsilon
<1$) having an irrational rotation number is topologically conjugate to an
irrational rotation in $S^{1}$ (see \cite{katok1995,wiggins2003}). The mode
locking corresponds to the fact that the rotation number is locally constant
around rational values of the driving frequency $\tau $: the map "rotation
number vs driving frequency" is a devil's staircase (see Fig.~\ref{Fig1} for
one example). Concerning the existence of the derivative of $\rho _{\tau }$,
with respect to the parameter $\tau $ of an orientation-preserving
diffeomorphism, a classical result asserts that this derivative is defined
in a point $\tau _{0}$ when the corresponding rotation number is irrational (%
$\rho _{\tau _{0}}\not\in \mathbb{Q}$) \cite{herman1977}. Moreover, as shown
recently in \cite{matsumoto2012}, when $\tau _{0}$ is the endpoint of an
interval for which $\rho _{\tau _{0}}\in \mathbb{Q}$ then the following
result holds $\lim_{\tau \rightarrow \tau _{0}}\sup \frac{\rho _{\tau }-\rho
_{\tau _{0}}}{\tau -\tau _{0}}=\infty $. Hence in the diffeomorphism case, $%
\rho _{\tau }$ cannot increase differentiably as $\tau $ grows. Other
results in this direction appear in \cite{villanueva2008}.

In Fig.~\ref{Fig1}, we plot the behavior or the rotation number of the
system with additive noise at each iterate like in \eqref{proc}. It is easy
to notice that in this case, the graph of the rotation number seems to go
through a "smoothing" process, i.e the map $\tau \mapsto \rho _{\tau }$
becomes smooth. In the case of weak nonlinearity, $\epsilon \in (0,1),$ this
was rigorously proved by the work of \cite{ZH07}. \ To the best of our
knowledge, both in the deterministic case and in the case with additive
noise, no rigorous results about the differentiability of $\rho _{\tau }$
are present in the literature for the case where\ $\epsilon \geq 1$.\newline
In the present paper we focus to this case, where the methods of \cite{ZH07}
cannot work; from the physical point of view this case corresponds to strong
nonlinear behavior and from the mathematical point of view it corresponds to
the fact that the map $T_{\tau ,\epsilon }$ is not a diffeomorphism anymore. 
\begin{figure}[h]
\centering
\includegraphics[scale=0.5]{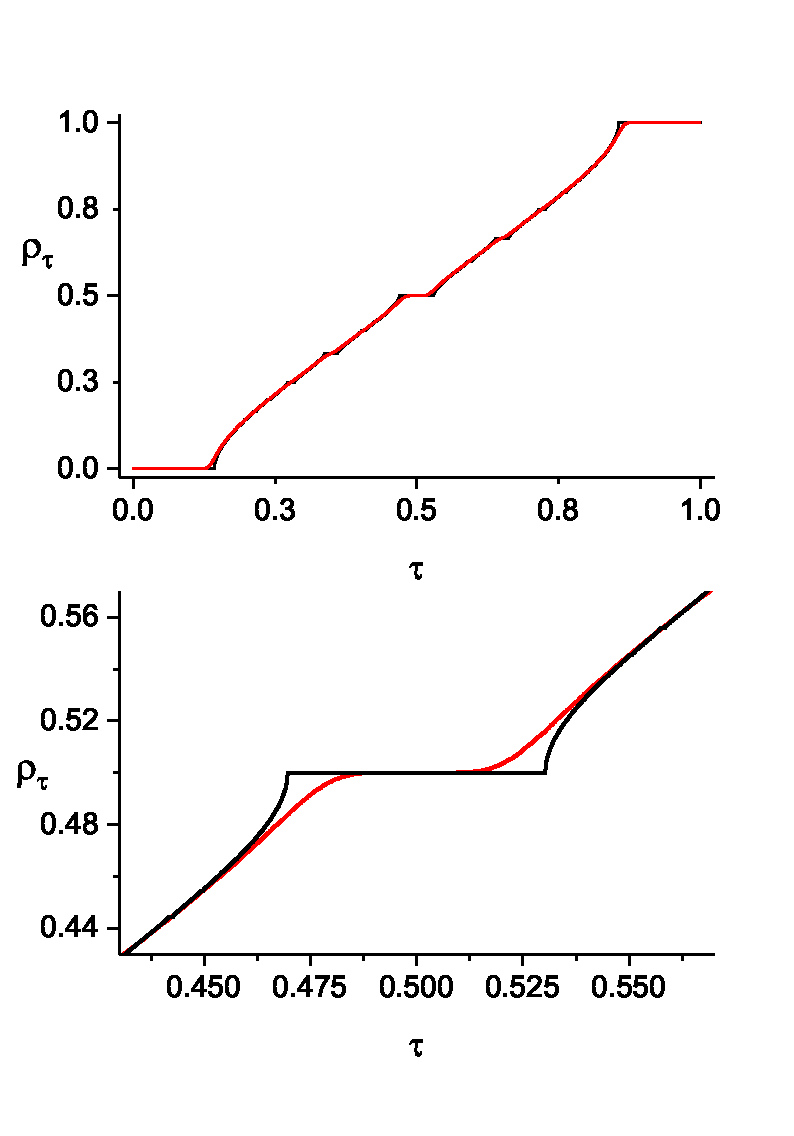}
\caption{ Plot of the rotation number $\protect\rho =\protect\rho _{\protect%
\tau }$ for $\protect\epsilon =0.9$. The black line corresponds to the
absence of uniformly distributed noise, while the red line shows this
dependence in the presence of such noise, with amplitude $\protect\xi =0.05$%
. Recall that $\protect\tau =2\protect\pi /\protect\omega $, where $\protect%
\omega $ is the driving frequency. The lower panel is a blow-up of the
dependence near the value $\protect\tau =0.5$.}
\label{Fig1}
\end{figure}

\begin{figure}[h!]
\caption{Same as Fig.~\protect\ref{Fig1} but for $\protect\epsilon=1.4$ and
for noise amplitude $\protect\xi=0.01$. Here the lower panel is a blow-up of
the dependence near $\protect\tau = 0.7$.}
\label{Fig2}\centering
\includegraphics[scale=0.55]{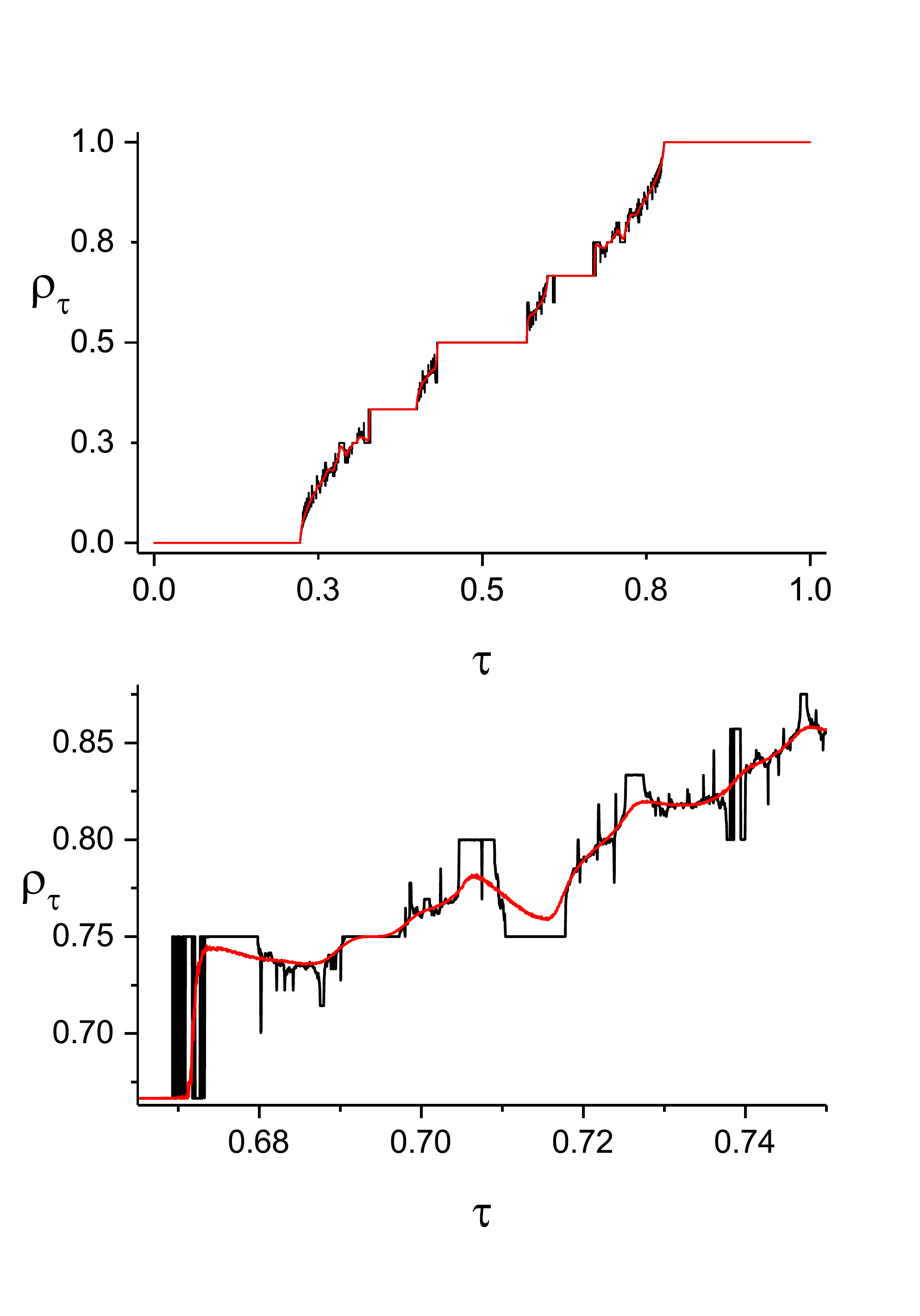}
\end{figure}

For $T_{\tau ,\epsilon }$, when $\epsilon \geq 1$ a mode locking phenomenon
can still be observed, but the behavior of the rotation number as a function
of the driving frequency is more complicated. In Fig.~\ref{Fig2} we show a
plot of the rotation number both in the case with noise and without noise.
We can observe that the action of the noise makes the rotation number
smoother, as in the diffeomorphism case. Furthermore we can observe that the
rotation number is not monotonic anymore as $\tau $ increases. This happens
both with noise and without noise. The goal of this paper is to propose an
explanation for those observations in the presence of noise, at the
crossroads of linear response theory and computer-assisted proofs.

More precisely, we prove that :

\begin{description}
\item[a)] The rotation number is differentiable even in the case $\epsilon
\geq 1$ at every value for the parameter $\tau $ for which the system is
mixing and we provide examples of intervals for which this assumption is
satisfied (see Section \ref{exp1}). We also provide an explicit formula to
compute the derivative.

This will be done adapting some general linear response results for systems
with noise coming from \cite{GG}. In those results the mixing assumption is
needed. We verify the assumption with some certified computer aided
estimates, establishing that the system is mixing when $\tau $ is in certain
intervals.

\item[b)] The rotation number is not always monotonic. In particular we show
intervals for which there is a decreasing of the rotation number. (see
Section \ref{exp2}).

This will be done by a certified approximation of the rotation number for a
certain values of the parameters. The certified estimate on the rotation
number will come from a certified approximation of the stationary measure of
the system with a small error in $L^{1}$, using the framework developed in 
\cite{GMN}.
\end{description}

We emphasize that the tools we develop here are only suitable to study this
problem \emph{in the presence of noise}. More precisely, we can study the
influence of changes in the driving frequency $\tau $, or in the noise
amplitude $\xi $ (although we do not write it here, see \cite[\S 4.2]{GG}),
as long as it does not go to $0$. The approach here used is adapted to
systems which are fastly converging to equilibrium and this is granted in
our examples by the presence of noise.

\section{Differentiability of the rotation number {\ for strong nonlinearity}
\label{sec33}}

In this section, we show that {when the system is mixing, cf. Assumption LR1
of Theorem \ref{th:linearresponse},} there is linear response and the
rotation number is differentiable with respect to changes in the parameter $%
\tau$. {\ This important inference} will be obtained by adapting general
linear response results for systems with additive noise to our case. In the
following section, we start to introduce these results and the preliminary
functional analytic work that is necessary.

The theoretical work will lead to linear response statements for families of
Arnold maps with noise, in which the forcing parameter $\tau $ is modulated.
The linear response will, in turn, lead to the differentiability of the
associated rotation number; see Proposition~\ref{proplinresp} and Corollary~%
\ref{corfin}.

\subsection{Linear response for {\ mixing systems with additive noise}}

\label{ssec:LR_mix} 
Let $BS(S^{1})$ be the set of Borel signed measures on $S^{1}$. This is a
normed vector space if we consider the Wasserstein-Kantorovich norm defined
on $BS$ as%
\begin{equation}
\Vert \mu \Vert _{W}=\underset{\|g\|_{\infty }\leq 1,~Lip(g)\leq 1}{\sup }%
\int_{S^{1}}g(x)d\mu .  \label{Wass}
\end{equation}%
where $Lip(g)$ is the best Lipschitz constant of $g$. We remark that $BS$ is
not complete with this norm. The completion leads to a distributions space
that is the dual of the space of Lipschitz functions. \newline
We recall the following classic fact: the subspace spanned by Dirac masses
in $BS$ is dense for the weak-$\star$ topology. Notice also that in this
topology, one may approximate Dirac masses by multiples of indicators
functions (e.g $\dfrac{1}{2\epsilon}\mathds{1}_{(a-\epsilon,a+\epsilon)}$
goes to $\delta_a$, the Dirac mass at $a\in S^1$, when $\epsilon\to0$, in
the weak-$\star$ topology). Thus, one may approximate any Borel signed
measure by a $L^1$ function in the weak-$\star$ topology. As the unit ball
of the space of Lipschitz functions is compact in $C^0(S^1)$, it follows
that one may approximate any Borel signed measure by a $L^1$ function in the
Wasserstein norm.

\medskip

We remark that since $S^{1}=\mathbb{R}/\mathbb{Z}$ is an additive group
endowed with a Haar-Lebesgue measure (here denoted by $m$), there is a well
defined notion of convolution. More precisely, given $f,g\in L^{1}((S^{1},m),%
\mathbb{R)}$ let us define the convolution $f\ast g$ as 
\begin{equation*}
f\ast g(t)=\int_{\mathbb{S}^{1}}f(t-\tau )g(\tau )d\tau .
\end{equation*}%
It is easy to see that $f\ast g$ is a $L^{1}((S^{1},m),\mathbb{R})$
function, and that the convolution $\ast $ is commutative. In this context,
it is also possible to define the convolution of two signed measures $f,g\in
BS(S^{1})$, $f\ast g\in BS(S^{1})$ as the Borel signed measure 
\begin{equation*}
f\ast g(A):=\int_{S^{1}}\int_{S^{1}}\mathds{1}_{A}(x+y)df(x)dg(y).
\end{equation*}%
There is a particularly useful special case of the convolution of two
measures in $BS(S^{1})$: suppose $f$ is absolutely continuous with respect
to $m$. In this case we have 
\begin{equation*}
f\ast g(t)=\int_{\mathbb{S}^{1}}f(t-\tau )dg(\tau ).
\end{equation*}%
It is noteworthy that in this case the convolution $f\ast g$ is a $L^{1}(m)$
function: this is a first instance of the \emph{regularization} properties
of the convolution, which we highlight and develop in the next section.

\begin{remark}
When dealing with measures which are absolutely continuous with respect to
the Lebesgue measure we will often identify the measure itself with his $%
L^{1}$density, to simplify notations.
\end{remark}

We end by recalling the definition of variation for a signed measure and a
function.

\begin{definition}
\label{def:spaces} {\ Let $g$ be} a finite Borel measure with sign on $S^{1}$%
. Given its decomposition as the difference of two positive measures $%
g=g^{+}+g^{-}$, we define its total variation as 
\begin{equation*}
\Vert g\Vert =g^{+}(S^{1})+g^{-}(S^{1}).
\end{equation*}%
Let $f\in L^{1}(S^{1},m)$ be the density of an absolutely continuous measure
and $P=P(x_{1},x_{2},\ldots ,x_{n})$ be the set of endpoints of a finite
partition of $S^{1}$. Let us denote $x_{0}:=x_{n}$. We define the variation
of $f$ with respect to $P$ as 
\begin{equation}
Var_{P}(f):=\sum_{k=1}^{n}|f(x_{k})-f(x_{k-1})|.
\end{equation}%
If there exists $M$ such that $Var(f):=\underset{P}{\sup }\ Var_{P}\leq M$,
with $P\in \{finite~partitions\}$, then $f$ is said to be of Bounded
Variation. Let the Banach space of Borel measures having a bounded variation
density be denoted as 
\begin{equation*}
BV(S^{1})=\{f\in L^{1},Var(f)<\infty \}
\end{equation*}%
with the norm $\Vert f\Vert _{BV}=\Vert f||_{L^{1}}+Var(f)$. We will always
use $BV$ for $BV(S^{1})$ unless $BV(\cdot )$ specifies an argument for the
space.
\end{definition}

\subsubsection{Regularization estimates\label{Reg}}

{The next lemmas} provide regularization properties of the convolution that
will be used later.

\begin{lemma}
\label{lemma1}Let $f\in L^{1}(m)$ and $g\in BS(S^{1})$. {\ The following
inequality holds:} 
\begin{equation*}
\Vert f\ast g||_{L^{1}}\leq \Vert g\Vert \cdot \Vert f||_{L^{1}}.
\end{equation*}
\end{lemma}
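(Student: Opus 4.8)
The plan is to prove the convolution inequality $\Vert f\ast g\Vert_{L^1}\leq \Vert g\Vert\cdot\Vert f\Vert_{L^1}$ by a direct estimate using the special-case formula for convolution when the first factor is absolutely continuous. Since $f\in L^1(m)$, we are in exactly the situation described before the statement, where $f\ast g(t)=\int_{S^1}f(t-\tau)\,dg(\tau)$ and the convolution is itself an $L^1(m)$ function. So I would begin by writing out the $L^1$-norm of $f\ast g$ explicitly as an iterated integral:
\[
\Vert f\ast g\Vert_{L^1}=\int_{S^1}\left|\int_{S^1}f(t-\tau)\,dg(\tau)\right|\,dm(t).
\]

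The key step is to move the absolute value inside the $t$-integral and then swap the order of integration via Tonelli/Fubini. First I would bound the inner expression by the total variation of $g$:
\[
\Vert f\ast g\Vert_{L^1}\leq \int_{S^1}\int_{S^1}|f(t-\tau)|\,d|g|(\tau)\,dm(t),
\]
where $|g|=g^{+}+g^{-}$ is the total-variation measure, whose total mass is precisely $\Vert g\Vert$ as in Definition~\ref{def:spaces}. Since the integrand is nonnegative, Tonelli's theorem lets me interchange the integrals freely even though $m$ and $|g|$ are different measures. Exchanging and using translation invariance of the Haar measure $m$ on the group $S^1=\mathbb{R}/\mathbb{Z}$, the inner integral over $t$ becomes $\int_{S^1}|f(t-\tau)|\,dm(t)=\Vert f\Vert_{L^1}$ for every fixed $\tau$, so the bound reduces to $\int_{S^1}\Vert f\Vert_{L^1}\,d|g|(\tau)=\Vert f\Vert_{L^1}\cdot\Vert g\Vert$, which is the claim.

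I would also want to address the decomposition notation used in the paper: the statement of Definition~\ref{def:spaces} writes $g=g^{+}+g^{-}$ with $\Vert g\Vert=g^{+}(S^1)+g^{-}(S^1)$, so the total-variation measure is $|g|=g^{+}+g^{-}$ and I should reference $|g|(S^1)=\Vert g\Vert$ consistently with that convention. The only point requiring care, and the main (mild) obstacle, is justifying the interchange of integration with respect to two genuinely different measures, $dm(t)$ and $dg(\tau)$ — this is why I pass to the positive measure $|g|$ first, so that Tonelli applies to a nonnegative integrand on the product space $(S^1\times S^1, m\otimes|g|)$ without any integrability worry. Everything else is routine, and the translation invariance of $m$ is exactly the structural fact recalled just above the lemma, so no new machinery is needed.
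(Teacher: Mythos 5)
Your proof is correct and follows essentially the same route as the paper's: both reduce to an interchange of the order of integration followed by the translation invariance of $m$, which gives $\int_{S^1}|f(t-\tau)|\,dm(t)=\Vert f\Vert_{L^1}$. The only cosmetic difference is that the paper tests $f\ast g$ against bounded functions $h$ and splits $g=g^{+}-g^{-}$ at the very end, whereas you estimate directly against the total variation measure $|g|$ and invoke Tonelli, which disposes of the sign issue upfront.
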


\begin{proof}
Suppose first that $g$ is positive 
\begin{eqnarray*}
\Vert f\ast g||_{L^{1}} &\leq &\underset{\Vert h\Vert _{\infty }\leq 1}{\sup 
}\left\vert \int_{\mathbb{S}^{1}}h(t)~(f\ast g)(t)dt\right\vert  \\
&=&\underset{\Vert h\Vert _{\infty }\leq 1}{\sup }\left\vert \int_{\mathbb{S}%
^{1}}h(t)\int_{\mathbb{S}^{1}}f(t-\tau )dg(\tau )\,dt\right\vert  \\
&\leq &\underset{\Vert h\Vert _{\infty }\leq 1}{\sup }\left\vert \int_{%
\mathbb{S}^{1}}\int_{\mathbb{S}^{1}}h(t)f(t-\tau )dt~dg\right\vert \, \\
&\leq &\int_{\mathbb{S}^{1}}\Vert f\Vert _{L^{1}}~dg(\tau )\leq \Vert f\Vert
_{L^{1}}\cdot g(S^{1}).
\end{eqnarray*}

The general case follows by the linearity of the convolution considering $%
g=g^{+}-g^{-}$ and the fact that $g^{-},g^{+}$ are positive measures.
\end{proof}

\begin{lemma}
\label{convoo copy(1)} Let $f\in BS[S^{1}]$ and $g\in L^{1}(S^{1})$. We have 
\begin{equation}
\Vert f\ast g\Vert _{W}\leq \Vert f\Vert _{W}\cdot \Vert g||_{L^{1}}.
\label{conv1}
\end{equation}
\end{lemma}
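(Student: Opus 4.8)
The plan is to dualize the statement: I would bound the integral of an arbitrary admissible test function against $f\ast g$ and reduce everything to a translation-invariance property of the Wasserstein norm of $f$. First I would fix a function $h$ with $\Vert h\Vert _{\infty }\leq 1$ and $Lip(h)\leq 1$ and unfold the definition of the convolution of the signed measure $f$ with $g$ (viewed, via $dg(y)=g(y)\,dy$, as an absolutely continuous measure), using Fubini to write
\[
\int_{S^{1}}h(t)\,d(f\ast g)(t)=\int_{S^{1}}\int_{S^{1}}h(x+y)\,df(x)\,g(y)\,dy .
\]
The Fubini interchange is legitimate because $g\in L^{1}$, $f$ has finite total variation, and $h$ is bounded, so the double integral of $h(x+y)$ converges absolutely.

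The central observation is that for each fixed $y$ the shifted function $x\mapsto h(x+y)$ is again admissible: on $S^{1}=\mathbb{R}/\mathbb{Z}$ the metric is translation-invariant, so translation preserves both $\Vert \cdot \Vert _{\infty }$ and the Lipschitz seminorm. Since the admissible class is symmetric under $h\mapsto -h$, the quantity $\Vert f\Vert _{W}$ bounds $\int h\,df$ in absolute value for \emph{every} admissible $h$; applied to the translate this yields
\[
\left\vert \int_{S^{1}}h(x+y)\,df(x)\right\vert \leq \Vert f\Vert _{W}
\]
uniformly in $y$. I would then insert this into the inner integral, use the triangle inequality in $y$, and take the supremum over $h$:
\[
\left\vert \int_{S^{1}}h(t)\,d(f\ast g)(t)\right\vert \leq \int_{S^{1}}\Vert f\Vert _{W}\,\vert g(y)\vert \,dy=\Vert f\Vert _{W}\cdot \Vert g\Vert _{L^{1}},
\]
and passing to the supremum over admissible $h$ on the left gives the claimed inequality \eqref{conv1}.

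The only points requiring genuine care are the two just flagged — the absolute convergence justifying Fubini and the verification that translation is an isometry of the admissibility constraints — and both are routine. The one conceptually essential step, which is what makes the estimate clean, is the translation-invariance remark: it is precisely what lets me pull $\Vert f\Vert _{W}$ out of the inner integral \emph{uniformly in} $y$ before integrating against $g$, so that the $L^{1}$ mass of $g$ appears as the multiplicative factor. Everything past that point is bookkeeping, exactly parallel in spirit to the dual estimate already carried out in Lemma~\ref{lemma1}.
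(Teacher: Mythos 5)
Your argument is correct and is essentially identical to the paper's own proof: both dualize against an admissible test function $h$, apply Fubini to write $\int h\,d(f\ast g)=\int\!\!\int h(x+y)\,df(x)\,g(y)\,dy$, observe that the translate $h_y(x)=h(x+y)$ remains admissible for the Wasserstein norm, and integrate the resulting uniform bound $\bigl\vert\int h_y\,df\bigr\vert\leq\Vert f\Vert_{W}$ against $\vert g(y)\vert\,dy$. No meaningful difference in route or in the points requiring care.
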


\begin{proof}
Let us consider $f\in BS(S^{1})$ and $g\in L^{1}(S^{1})$. Let $h\in
Lip(S^{1})$ be such that $\Vert h\Vert _{\infty }+Lip(h)\leq 1$. \newline
To prove \eqref{conv1}, we estimate the quantity 
\begin{align*}
\left\vert \int_{S^{1}}h(z)d(f\ast g)(z)\right\vert & =\left\vert
\int_{S^{1}}\int_{S^{1}}h_{y}(x)df(x)g(y)dy\right\vert  \\
& \leq \int_{S^{1}}\left\vert \int_{S^{1}}h_{y}(x)df(x)\right\vert |g(y)|dy
\\
& \leq \Vert f\Vert _{W}\Vert g||_{L^{1}}
\end{align*}%
where $h_{y}(x):=h(x+y)$ is also a Lipschitz function satisfying $\Vert
h_{y}\Vert _{\infty }+Lip(h_{y})\leq 1$. \newline
Thus we have 
\begin{equation*}
\Vert f\ast g\Vert _{W}:=\sup_{\QATOP{h\in Lip(S^{1})}{\Vert h\Vert
_{Lip}\leq 1}}\left\vert \int_{S^{1}}h(z)d(f\ast g)(z)\right\vert \leq \Vert
f\Vert _{W}\Vert g||_{L^{1}}
\end{equation*}%
\end{proof}

The next definition introduces the space of zero average measures.

\begin{definition}
\label{v}Let us define the space of zero average measures $V\subset L^{1}$
as 
\begin{equation}
V:=\{f\in L^{1}(S^{1})~s.t.~\int_{S^{1}}f~dm=0\}.  \label{d1}
\end{equation}
\end{definition}

\begin{lemma}
\label{lemmconv2}Let $f\in BS(S^{1})$ such that $f(S^{1})=0$ and $g\in BV$.
Then the convolution $f\ast g\in L^{1}(S^{1})$ and 
\begin{equation}
\Vert f\ast g||_{L^{1}}\leq 2\Vert f\Vert _{W}\cdot \Vert g\Vert _{BV}.
\label{conv2}
\end{equation}
\end{lemma}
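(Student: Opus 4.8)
The plan is to estimate the $L^1$ norm by duality and reduce everything to a single test function on which the Wasserstein norm of $f$ can act. Concretely, I would write
\[
\|f\ast g\|_{L^1}=\sup_{\|h\|_\infty\le 1}\int_{S^1} h(t)\,(f\ast g)(t)\,dt ,
\]
and, since $g\in BV\subset L^1$, use the absolutely continuous form of the convolution $f\ast g(t)=\int_{S^1}g(t-\tau)\,df(\tau)$ (valid by commutativity, as $g$ is $L^1$) together with Fubini to obtain
\[
\int_{S^1} h(t)\,(f\ast g)(t)\,dt=\int_{S^1}\phi(\tau)\,df(\tau),\qquad \phi(\tau):=\int_{S^1}h(t)\,g(t-\tau)\,dt .
\]
The whole problem then becomes: show that $\phi$ is a test function of controlled size to which $\|f\|_W$ applies.

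First I would record the two elementary bounds on $\phi$. The sup bound is immediate, $\|\phi\|_\infty\le\|h\|_\infty\|g\|_{L^1}\le\|g\|_{L^1}$, by translation invariance of $m$. The Lipschitz bound is the crux: I claim $Lip(\phi)\le Var(g)$. For this I would represent the distributional derivative of $g$ as a finite signed measure $\mu$ with $\|\mu\|\le Var(g)$, write $g(t-\tau)-g(t-\tau')$ as the $\mu$-mass of the short arc between the two shifted arguments, and swap the order of integration:
\[
\int_{S^1}\big|g(\cdot-\tau)-g(\cdot-\tau')\big|\,dm\le \int_{S^1}\Big(\int_{S^1}\mathds{1}_{\mathrm{arc}}(v)\,dm\Big)\,d|\mu|(v)=d(\tau,\tau')\,\|\mu\| .
\]
Combined with $\|h\|_\infty\le1$ this gives $|\phi(\tau)-\phi(\tau')|\le d(\tau,\tau')\,Var(g)$, i.e. $Lip(\phi)\le Var(g)$.

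Now the zero-mass hypothesis enters. Because $f(S^1)=0$, for any constant $c$ we have $\int\phi\,df=\int(\phi-c)\,df$; choosing $c=\phi(\tau_0)$ at a fixed base point and using $Lip(\phi)\le Var(g)$ together with $\mathrm{diam}(S^1)=\tfrac12$ yields $\|\phi-c\|_\infty\le\tfrac12 Var(g)$. Hence $\phi-c$ is Lipschitz with both $\|\phi-c\|_\infty$ and $Lip(\phi-c)$ at most $Var(g)\le\|g\|_{BV}$, so by the very definition of $\|\cdot\|_W$,
\[
\Big|\int_{S^1}\phi\,df\Big|=\Big|\int_{S^1}(\phi-c)\,df\Big|\le \|g\|_{BV}\,\|f\|_W .
\]
Taking the supremum over $h$ gives $\|f\ast g\|_{L^1}\le \|g\|_{BV}\|f\|_W$, which is in fact stronger than the stated inequality, so the factor $2$ is a comfortable over-estimate; the $L^1$-membership of $f\ast g$ is guaranteed by Lemma~\ref{lemma1} applied to $g\ast f$.

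The main obstacle is the Lipschitz estimate $Lip(\phi)\le Var(g)$, that is, the $L^1$-modulus-of-continuity bound for $BV$ functions on the circle. The delicate points there are the correct handling of the cyclic variation in the definition of $Var$ and the Fubini interchange, both of which must respect the circle geometry (the short-arc convention and $\mathrm{diam}(S^1)=\tfrac12$). Everything else is routine duality and the use of $f(S^1)=0$ to pass from the sup-norm of $\phi$ to its Lipschitz seminorm, which is precisely what makes the weak (Wasserstein) norm of $f$ --- rather than its total variation --- the quantity that governs $\|f\ast g\|_{L^1}$.
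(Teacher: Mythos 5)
Your proof is correct, and it takes a genuinely different route from the paper's. The paper regularizes both factors (replacing $g$ by a $C^{1}$ function $g_{\varepsilon}$ with the same $BV$ norm and $f$ by a nearby zero-average $L^{1}$ function $f_{\epsilon}$), takes the sign function of $f_{\epsilon}\ast g_{\varepsilon}$, centers it to have zero average --- which is legitimate precisely because $f\ast g$ has zero mass --- and integrates by parts against its periodic primitive $k$; since $k$ is only controlled as a $2$-Lipschitz function bounded by $2$, this is where the factor $2$ in \eqref{conv2} comes from. You instead argue by duality, transferring the convolution onto the test function to form $\phi(\tau)=\int h(t)g(t-\tau)\,dt$, proving $Lip(\phi)\leq Var(g)$ via the $L^{1}$-modulus-of-continuity bound for $BV$ functions (the integrated form of the same fact, that $Dg$ is a measure of mass at most $Var(g)$, that the paper exploits through $g_{\varepsilon}'$), and then using $f(S^{1})=0$ to subtract the constant $\phi(\tau_{0})$ and bound $\Vert\phi-\phi(\tau_{0})\Vert_{\infty}\leq\tfrac{1}{2}Var(g)$ via $\mathrm{diam}(S^{1})=\tfrac12$. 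The two uses of the zero-mass hypothesis are exactly parallel (the paper needs it to make the primitive of the centered sign function one-periodic; you need it to trade the sup-norm of $\phi$ for its Lipschitz seminorm), but your argument avoids all approximation steps and yields the sharper bound $\Vert f\ast g\Vert_{L^{1}}\leq Var(g)\,\Vert f\Vert_{W}\leq\Vert g\Vert_{BV}\Vert f\Vert_{W}$, of which the stated inequality with the factor $2$ is a weakening. The only points requiring care, which you correctly flag, are choosing the good (e.g.\ right-continuous) representative of $g$ so that its distributional derivative has mass at most the cyclic variation of Definition~\ref{def:spaces}, and the short-arc convention in the Fubini interchange.
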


\begin{proof}
Consider a $C^{1}$ function $g_{\varepsilon }$ such that $\Vert
g_{\varepsilon }-g||_{L^{1}}\leq \varepsilon $ and $\Vert g_{\varepsilon
}\Vert _{BV}=\Vert g\Vert _{BV}$, then by Lemma \ref{lemma1} 
\begin{equation*}
\Vert f\ast g-f\ast g_{\varepsilon }||_{L^{1}}\leq \varepsilon 
\end{equation*}%
thus we can replace $g$ with $g_{\varepsilon }$ up to an error which is as
small as wanted in the estimate we consider. We now consider $\Vert f\ast
g_{\varepsilon }||_{L^{1}}$. Since $g_{\varepsilon }$ is $C^{1}$ with
compact support, it is bounded and has bounded derivative. Hence there is $C$
such that for every $f_{\epsilon }\in V$ satisfying $\Vert f_{\epsilon
}-f\Vert _{W}\leq $ $C\epsilon $, it holds $|f\ast g_{\varepsilon
}(x)-f_{\epsilon }\ast g_{\varepsilon }(x)|\leq \epsilon $ for every $x\in {%
\mathbb{R}}$, by which 
\begin{equation*}
\Vert f_{\epsilon }\ast g_{\epsilon }-f\ast g_{\varepsilon }||_{L^{1}}\leq
\varepsilon 
\end{equation*}%
Thus we may also replace $f$ with $f_{\epsilon }$ in our main estimate.
Remark that $f_{\epsilon }\ast g_{\varepsilon }(t)$ is a $C^{1}$ function,
with $(f\ast g_{\varepsilon })^{\prime }=f\ast g_{\epsilon }^{\prime }$. 
\newline
Let us consider 
\begin{equation*}
\hat{h}(t)=\left\{ 
\begin{array}{c}
\frac{|f_{\epsilon }\ast g_{\varepsilon }(t)|}{f_{\epsilon }\ast
g_{\varepsilon }(t)}~\mathrm{if}~(f_{\epsilon }\ast g_{\varepsilon })(t)\neq
0 \\ 
0~\mathrm{if}~(f_{\epsilon }\ast g_{\varepsilon })(t)=0%
\end{array}%
\right. 
\end{equation*}%
and 
\begin{equation*}
h(t)=\hat{h}(t)-\int \hat{h}(t)dt.
\end{equation*}%
$\hat{h}$ is simply the sign function of $f_{\epsilon }\ast g_{\epsilon }$.
Thus by definition of $\hat{h}$, 
\begin{equation*}
\Vert f_{\epsilon }\ast g_{\varepsilon }||_{L^{1}}=\int_{\mathbb{S}%
^{1}}\left\vert (f_{\epsilon }\ast g_{\varepsilon })(t)\right\vert
dt=\int_{S^{1}}h(t)(f_{\epsilon }\ast g_{\varepsilon })(t)dt.
\end{equation*}%
Indeed, recalling that $f_{\epsilon }\ast g_{\varepsilon }\in V$ 
\begin{eqnarray*}
\int_{S^{1}}[\hat{h}(t)-\int_{S^{1}}\hat{h}dt](f_{\epsilon }\ast
g_{\varepsilon })(t)dt &=&\int_{S^{1}}\hat{h}(t)(f_{\epsilon }\ast
g_{\varepsilon })(t)dt \\
&=&\int_{\mathbb{S}^{1}}\left\vert (f_{\epsilon }\ast g_{\varepsilon
})(t)\right\vert dt.
\end{eqnarray*}

Taking a one-periodic representative of $f_{\epsilon }\ast g_{\epsilon }$,
we can interpret the integral on $S^{1}$ as an integral on $[0,1]$. Since $h$
has zero average, its primitive $\int_{0}^{t}h(s)ds=k(t)$ is also a
one-periodic map. Applying integration by parts thus yields: 
\begin{equation*}
\Vert f_{\epsilon }\ast g_{\varepsilon
}||_{L^{1}}=\int_{0}^{1}\int_{0}^{t}h(r)dr~(f_{\epsilon }\ast g_{\varepsilon
})^{\prime }(t)dt=\int_{0}^{1}k(t)(f_{\epsilon }\ast g_{\varepsilon
}^{\prime })(t)dt.
\end{equation*}%
Note that since its derivative $h$ is bounded by $2$, $k$ is a $2-$Lipschitz
function on $S^{1}$. \newline
The last integral can be rewritten as 
\begin{align*}
\int_{0}^{1}k(t)\left( \int_{S^{1}}g_{\varepsilon }^{\prime }(t)f_{\epsilon
}(t-s)ds\right) dt& =\int_{S^{1}}\int_{S^{1}}k(t)g_{\varepsilon }^{\prime
}(s)f_{\epsilon }(t-s)dsdt \\
& =\int_{S^{1}}\int_{S^{1}}k(t)f_{\epsilon }(t-s)g_{\varepsilon }^{\prime
}(s)dtds \\
& =\int_{S^{1}}\left( \int_{S^{1}}k(t+s)f_{\epsilon }(t)dt\right)
g_{\epsilon }^{\prime }(s)ds
\end{align*}%
by applying Fubini-Tonelli theorem. Thus we obtain 
\begin{equation*}
\Vert f_{\epsilon }\ast g_{\varepsilon }||_{L^{1}}\leq
\int_{S^{1}}\left\vert \int_{S^{1}}k_{s}(t)f_{\epsilon }(t)dt\right\vert
|g_{\epsilon }^{\prime }(s)|ds\leq 2\Vert f_{\epsilon }\Vert _{W}\Vert
g_{\epsilon }\Vert _{BV}
\end{equation*}%
since $k_{s}:t\mapsto k(t+s)$ is a $2$-Lipschitz function satisfying $\Vert
k_{s}\Vert _{\infty }\leq 2$ for all $s\in S^{1}$. This last estimate being
valid for each $\varepsilon $, the proposition is established.
\end{proof}

\begin{lemma}
\label{lemmaconv4}Let $f\in L^{1},$ $g\in BV$ 
\begin{equation}
\Vert f\ast g\Vert _{BV}\leq \Vert f||_{L^{1}}\cdot \Vert g\Vert _{BV}.
\label{conv3}
\end{equation}
\end{lemma}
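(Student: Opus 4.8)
The plan is to split the $BV$-norm into its two constituent pieces, $\|f\ast g\|_{BV}=\|f\ast g\|_{L^1}+Var(f\ast g)$, and to bound each by $\|f\|_{L^1}$ times the corresponding piece of $\|g\|_{BV}=\|g\|_{L^1}+Var(g)$. The $L^1$ part is immediate: since $g\in BV\subset L^1$, its total variation as a signed measure coincides with $\|g\|_{L^1}$, so Lemma~\ref{lemma1} gives directly $\|f\ast g\|_{L^1}\le \|g\|_{L^1}\,\|f\|_{L^1}$. Everything therefore reduces to showing
\[
Var(f\ast g)\le \|f\|_{L^1}\cdot Var(g),
\]
after which adding the two inequalities yields $\|f\ast g\|_{BV}\le \|f\|_{L^1}(\|g\|_{L^1}+Var(g))=\|f\|_{L^1}\,\|g\|_{BV}$.

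For the variation bound I would argue directly from Definition~\ref{def:spaces}. Fix a finite partition $P=\{x_0,\dots,x_n\}$ of $S^1$ (with $x_0:=x_n$) and write, for every $t$, $(f\ast g)(t)=\int_{S^1}f(y)\,g(t-y)\,dy$, a genuinely pointwise-defined quantity since altering $g$ on a null set does not affect the integral. Then for each consecutive pair,
\[
(f\ast g)(x_k)-(f\ast g)(x_{k-1})=\int_{S^1}f(y)\,[\,g(x_k-y)-g(x_{k-1}-y)\,]\,dy,
\]
so summing absolute values and exchanging the finite sum with the integral gives
\[
Var_P(f\ast g)\le \int_{S^1}|f(y)|\Big(\sum_{k=1}^n|g(x_k-y)-g(x_{k-1}-y)|\Big)\,dy.
\]
The key observation is that, for each fixed $y$, the inner sum is exactly $Var_{P-y}(g)$ for the shifted partition $P-y$, hence is bounded by $Var(g)$. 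This yields $Var_P(f\ast g)\le \|f\|_{L^1}\,Var(g)$ uniformly in $P$, and taking the supremum over $P$ gives the desired inequality.

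The one point requiring care — and what I regard as the main obstacle — is that the values $g(x_k-y)$ appearing above depend on the chosen representative of the $L^1$-class of $g$, whereas the convolution integral does not. I would resolve this by fixing once and for all a good representative $\bar g$ whose pointwise variation equals $Var(g)$; replacing $g$ by $\bar g$ leaves $(f\ast g)(t)$ unchanged for every $t$ while turning the inner sum into a legitimate partition-variation of $\bar g$, bounded by $Var(\bar g)=Var(g)$.

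Alternatively, and in keeping with the approximation style used in Lemma~\ref{lemmconv2}, one can sidestep representatives entirely: approximate $g$ in $L^1$ by $C^1$ functions $g_\varepsilon$ with $Var(g_\varepsilon)\le Var(g)$, use the differentiation identity $(f\ast g_\varepsilon)'=f\ast g_\varepsilon'$ so that $Var(f\ast g_\varepsilon)=\|f\ast g_\varepsilon'\|_{L^1}\le \|f\|_{L^1}\,\|g_\varepsilon'\|_{L^1}=\|f\|_{L^1}\,Var(g_\varepsilon)$ by Lemma~\ref{lemma1}, observe that $f\ast g_\varepsilon\to f\ast g$ in $L^1$ (again by Lemma~\ref{lemma1}), and conclude by lower semicontinuity of the variation under $L^1$-convergence. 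Either route delivers the variation estimate, and the rest is the elementary recombination described above.
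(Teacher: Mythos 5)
Your proposal is correct, and your \emph{alternative} route is essentially the paper's own proof: the paper also reduces to smooth functions, uses the identity $(f\ast g_\varepsilon)'=f\ast g_\varepsilon'$ together with Lemma~\ref{lemma1}, and passes to the limit --- though it does so by a two-step approximation (first $g$ by $g_\varepsilon\in C^1$ with $\Vert g_\varepsilon\Vert_{BV}=\Vert g\Vert_{BV}$, keeping $f\in C^1$, and only then $f\in L^1$ by $f_\varepsilon\in C^1$), with the limit handled by triangle-inequality bookkeeping such as $Var(f\ast(g-g_\varepsilon))\leq Var(f)\,\Vert g-g_\varepsilon\Vert_{L^1}$ rather than by lower semicontinuity of the variation. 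Your streamlined version (approximate only $g$, exploit that $(f\ast g_\varepsilon)'=f\ast g_\varepsilon'$ already holds for $f\in L^1$, then invoke lower semicontinuity) is cleaner, at the modest cost of importing that semicontinuity as a known fact. Your \emph{primary} route --- bounding $Var_P(f\ast g)$ directly by exchanging the finite sum with the convolution integral and recognizing the inner sum as $Var_{P-y}(g)$ --- does not appear in the paper and is genuinely different: it avoids approximation and the differentiation identity altogether, making translation-invariance of the variation the visible mechanism, in exchange for having to fix a good pointwise representative of $g$; you flag and resolve that issue correctly (and, for what it is worth, more carefully than the paper, whose Definition~\ref{def:spaces} leaves the same representative-dependence implicit). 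Both routes deliver the stated inequality; note also that, like you, the paper handles the $L^1$ part of the $BV$-norm via Lemma~\ref{lemma1}, though it leaves this step implicit in the phrase ``from which we get directly the statement.''
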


\begin{proof}
Similar estimates are well known for the convolution on $\mathbb{R}$. We
prove the estimate on $S^{1}.$ Let us suppose first that $f,g\in C^{1}(S^{1})
$. In this case $f\ast g\in C^{1}$and 
\begin{equation*}
Var(f\ast g)=\int_{\mathbb{S}^{1}}\left\vert (f\ast g)^{\prime
}(t)\right\vert dt=\int_{\mathbb{S}^{1}}\left\vert f\ast g^{\prime
}(t)\right\vert dt
\end{equation*}%
and by Lemma \ref{lemma1} 
\begin{equation*}
Var(f\ast g)\leq \Vert f||_{L^{1}}\Vert g^{\prime }||_{L^{1}}
\end{equation*}%
from which we get directly the statement.

Now suppose $f\in C^{1}$ and $g\in BV$, let us consider as before $%
g_{\varepsilon }\in C^{1}$ such that $\Vert g_{\varepsilon }-g||_{L^{1}}\leq
\varepsilon $ and $\Vert g_{\varepsilon }\Vert _{BV}=\Vert g\Vert _{BV}.$

Now 
\begin{eqnarray*}
Var(f\ast g) &\leq &Var(f\ast g-g_{\epsilon })+Var(f\ast g_{\epsilon }) \\
&\leq &Var(f\ast g-g_{\epsilon })+\Vert f||_{L^{1}}\Vert g_{\epsilon
}^{^{\prime }}||_{L^{1}}
\end{eqnarray*}%
but $Var(f\ast g-g_{\epsilon })\leq Var(f)\Vert g-g_{\epsilon }\Vert \leq
\epsilon Var(f)$ and can be made as small as wanted, allowing to prove the
statement in the case $f\in C^{1}$ and $g\in BV.$

Now by approximating $f\in L^{1}$ by a $f_{\varepsilon }\in C^{1}$ such that 
$\Vert f_{\varepsilon }-f||_{L^{1}}\leq \varepsilon $ and using again Lemma %
\ref{lemma1} we get the full statement.
\end{proof}

\subsubsection{Linear response, a general statement.}

In \cite{GG}, systems with additive noise are considered and a linear
response theorem is proved for a general class of Markov operators including
dynamical systems with additive noise. We state the theorem and apply it to
the random Arnold maps and perturbations we mean to consider. \newline
We will consider a normed vector space $(B_{w},\Vert \cdot \Vert _{w})$,
with $BS\supseteq B_{w}\supseteq L^{1}$ and $\Vert \cdot \Vert _{w}\leq
\Vert \cdot ||_{L^{1}}$, as well as its space of zero average measure $V_{w}$%
. 
\begin{equation}
V_{w}:=\{\mu \in B_{w}~s.t.~~\mu (S^{1})=0\}.  \label{d2}
\end{equation}%
Let us consider a family of Markov operators $L_{\delta
}:BS(S^{1})\rightarrow BS(S^{1})$, where $\delta \in \lbrack 0,\overline{%
\delta })$.

Recall that a Markov operator $L$ is positive and preserves probability
measures: if $f\geq 0$ then $Lf\geq 0$ and $f(S^{1})=Lf(S^{1})$ for each $%
f\in BS(S^{1})$. Let us denote by ${Id}$ the identity operator, and by $(z{Id%
}-L)^{-1}$ the resolvent related to an operator $L$, formally defined as%
\begin{equation}
(z{Id}-L)^{-1}=\sum_{n=0}^{\infty }\dfrac{1}{z^{n+1}}L^{n}.  \label{d3}
\end{equation}%
which is rigorously defined on suitable spaces whenever the infinite series
converges. Let us suppose that each operator $L_{\delta }$ has a fixed
probability measure in $BV(S^{1})$. We now show that under mild further
assumptions these fixed points vary smoothly in the weaker norm $\Vert \cdot
\Vert _{w}$. The following general result on linear response for \ system
with additive noise is proved in \cite{GG}.

\begin{theorem}
\label{th:linearresponse} Suppose that the family of operators $L_{\delta}$
satisfies the following {\ four conditions:}

\begin{itemize}
\item[(LR0)] $f_{\delta }\in BV(S^{1})$ is a probability measure such that $%
L_{\delta }f_{\delta }=f_{\delta }$ for each $\delta \in \left[ 0,\overline{%
\delta }\right) $. Moreover there is $M\geq 0$ such that $\Vert f_{\delta
}\Vert _{BV}\leq M$ for each $\delta \in \left[ 0,\overline{\delta }\right) $%
.

\item[(LR1)] (mixing for the unperturbed operator) For each $g\in BV(S^{1})$
with ${\int g\,dm=0}$ 
\begin{equation*}
\lim_{n\rightarrow \infty }\Vert L_{0}^{n}g||_{L^{1}}=0.
\end{equation*}

\item[(LR2)] (regularization of the unperturbed operator) $L_{0}$ is
regularizing from $B_{w}$ to $L^{1}$ and from $L^{1}$ to Bounded Variation
i.e. $L_{0}:(B_{w},\Vert \cdot \Vert _{w})\rightarrow L^{1}$ , ${\
L_{0}:L^{1}\rightarrow BV}$ are continuous.

\item[(LR3)] (small perturbation and derivative operator) There is $K\geq 0$
such that\footnote{\textbf{Notation:} If $A,B$ are two normed vector spaces
and $T:A\rightarrow B$ we write $\Vert T\Vert _{A\rightarrow B}:=\sup_{f\in
A,\Vert f\Vert _{A}\leq 1}\Vert Tf\Vert _{B}$} $\left\vert |L_{0}-L_{\delta
}|\right\vert _{L^{1}\rightarrow (B_{w},\Vert \cdot \Vert _{w})}\leq K\delta
,$ and $\left\vert |L_{0}-L_{\delta }|\right\vert _{BV\rightarrow V}\leq
K\delta $. There is ${\dot{L}f_{0}\in V_{w}}$ such that 
\begin{equation}
\underset{\delta \rightarrow 0}{\lim }\left\Vert \frac{(L_{0}-L_{\delta })}{
\delta }f_{0}-\dot{L}f_{0}\right\Vert _{w}=0.  \label{derivativeoperator}
\end{equation}
\end{itemize}

Then $({Id}-L_{0})^{-1}:V_{w}\rightarrow $ $V_{w}$ is a continuous operator
and we have the following linear response formula 
\begin{equation}
\lim_{\delta \rightarrow 0}\left\Vert \frac{f_{\delta }-f_{0}}{\delta }-({Id}%
-L_{0})^{-1}\dot{L}f_{0}\right\Vert _{w}=0.  \label{linresp}
\end{equation}%
Thus $({Id}-L_{0})^{-1}\dot{L}f_{0}$ represents the first-order term in the
change of equilibrium measure for the family of systems $L_{\delta }$.%
\newline
\end{theorem}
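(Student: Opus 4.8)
The plan is to prove the two conclusions in three stages: first show that $(Id-L_0)^{-1}=\sum_{n\ge0}L_0^n$ is a well-defined bounded operator on $V_w$; then establish an exact algebraic identity for $f_\delta-f_0$ in terms of the perturbation $(L_\delta-L_0)f_\delta$; and finally pass to the limit $\delta\to0$ using (LR3).

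\emph{Stage 1: boundedness of the resolvent on $V_w$.} The crux here is to upgrade the merely qualitative mixing (LR1) to a quantitative geometric contraction of $L_0^n$ on zero-average densities. I would first recall that every Markov operator acts as an $L^1$-contraction on densities: writing $g=g^+-g^-$ and using positivity together with mass preservation gives $\Vert L_0g\Vert_{L^1}\le\Vert g\Vert_{L^1}$, so $\{L_0^n\}$ is equibounded on $L^1$. Since $BV(S^1)$ embeds compactly into $L^1$, the set $\{g\in V:\Vert g\Vert_{BV}\le1\}$ is relatively compact in $L^1$; combining this compactness, the equiboundedness, and the pointwise convergence $\Vert L_0^ng\Vert_{L^1}\to0$ of (LR1) yields \emph{uniform} convergence, i.e. $a_n:=\sup\{\Vert L_0^ng\Vert_{L^1}:g\in V,\ \Vert g\Vert_{BV}\le1\}\to0$. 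Using the regularization $L_0:L^1\to BV$ from (LR2), with norm $C_1$, I then pick $N$ with $a_NC_1<\tfrac12$; since for $h\in V\cap L^1$ one has $L_0h\in V\cap BV$ with $\Vert L_0h\Vert_{BV}\le C_1\Vert h\Vert_{L^1}$, this gives $\Vert L_0^{N+1}h\Vert_{L^1}\le\tfrac12\Vert h\Vert_{L^1}$. Iterating and interpolating with the contraction property produces geometric decay $\Vert L_0^nh\Vert_{L^1}\le C\rho^n\Vert h\Vert_{L^1}$ with $\rho<1$. Finally, for $g\in V_w$ the regularization $L_0:B_w\to L^1$ places $L_0g$ in $V\cap L^1$, so the tail $\sum_{n\ge1}L_0^ng=\sum_{n\ge0}L_0^n(L_0g)$ converges in $L^1$, hence in $\Vert\cdot\Vert_w$ since $\Vert\cdot\Vert_w\le\Vert\cdot\Vert_{L^1}$; thus $(Id-L_0)^{-1}$ is well defined and bounded from $V_w$ to $V_w$.

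\emph{Stage 2: the response identity and its limit.} Subtracting $f_0=L_0f_0$ from $f_\delta=L_\delta f_\delta$ and inserting $\pm L_0f_\delta$ gives the exact identity $(Id-L_0)(f_\delta-f_0)=(L_\delta-L_0)f_\delta$. As $f_\delta,f_0$ are probability measures in $BV$, their difference lies in $V\cap BV\subset V_w$, and the $BV\to V$ bound of (LR3) puts the right-hand side in $V\subset V_w$; applying the bounded resolvent from Stage 1 yields $\tfrac{f_\delta-f_0}{\delta}=(Id-L_0)^{-1}\tfrac{(L_\delta-L_0)f_\delta}{\delta}$. To conclude I first show $\Vert f_\delta-f_0\Vert_{L^1}\to0$: the $L^1\to(B_w,\Vert\cdot\Vert_w)$ estimate of (LR3) gives $\Vert(L_\delta-L_0)f_\delta\Vert_w\le K\delta\Vert f_\delta\Vert_{L^1}\le KM\delta$, whence $\Vert f_\delta-f_0\Vert_w\to0$ by the identity; rewriting $f_\delta-f_0=(L_\delta-L_0)f_\delta+L_0(f_\delta-f_0)$ and bounding the first term by the $BV\to V$ estimate and the second by the regularization $L_0:B_w\to L^1$ upgrades this to $\Vert f_\delta-f_0\Vert_{L^1}\to0$. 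Then I split $\tfrac{(L_\delta-L_0)f_\delta}{\delta}=\tfrac{(L_\delta-L_0)f_0}{\delta}+\tfrac{(L_\delta-L_0)(f_\delta-f_0)}{\delta}$: the first term converges, by (LR3) and its sign convention, to $\dot Lf_0$, while the second is bounded in $\Vert\cdot\Vert_w$ by $K\Vert f_\delta-f_0\Vert_{L^1}\to0$. Applying the continuous operator $(Id-L_0)^{-1}$ to this convergence gives the linear-response formula \eqref{linresp}.

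\emph{Main obstacle.} I expect the principal difficulty to be Stage 1, namely converting the qualitative mixing of (LR1) into a \emph{uniform} geometric rate. The compact embedding $BV\hookrightarrow L^1$ and the $L^1$-contraction property are precisely what make the uniformization argument succeed, and the one-step regularization $L^1\to BV$ is what transports the resulting rate from $BV$-bounded data to general $L^1$ and thence $B_w$ data. A secondary subtlety, easy to overlook, is the upgrade from $\Vert\cdot\Vert_w$- to $L^1$-convergence of $f_\delta$ in Stage 2, which is exactly what is needed to annihilate the cross term $\tfrac{(L_\delta-L_0)(f_\delta-f_0)}{\delta}$ in the limit.
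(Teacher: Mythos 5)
Your proof is correct, and it is essentially the argument of the cited reference: the paper itself gives no proof of this theorem, deferring entirely to [GG], and your three stages (uniformizing the qualitative mixing of (LR1) over the compact set $\{g\in V:\Vert g\Vert_{BV}\le 1\}\subset L^{1}$ to get a geometric rate, then transporting that rate via the regularizations $B_{w}\to L^{1}\to BV$ to bound the Neumann series on $V_{w}$, then passing to the limit in the exact identity $(Id-L_{0})(f_{\delta}-f_{0})=(L_{\delta}-L_{0})f_{\delta}$ after upgrading $\Vert f_{\delta}-f_{0}\Vert_{w}\to 0$ to $L^{1}$-convergence) reproduce the structure of the proof in [GG]. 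One small point to be aware of: as literally written, \eqref{derivativeoperator} defines $\dot{L}f_{0}$ as the limit of $\frac{(L_{0}-L_{\delta})}{\delta}f_{0}$, whereas your identity produces $\lim_{\delta\to 0}\frac{(L_{\delta}-L_{0})}{\delta}f_{0}$, so strictly speaking you would obtain $-(Id-L_{0})^{-1}\dot{L}f_{0}$ rather than \eqref{linresp}; this is a sign inconsistency in the paper's own statement (compare the two displays inside the proof of Lemma \ref{012}, which exhibit the same flip), and your reading is the one that makes \eqref{linresp} and the application in Corollary \ref{corfin} come out right, but you should say so explicitly rather than appeal to ``the sign convention'' of (LR3).
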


\begin{remark}
{Condition} LR0 is always satisfied by systems with additive noise.
Furthermore, the stationary measure $f_{0}$ has a density of bounded
variation; see \cite{GG}, Lemma 23.
\end{remark}

\begin{remark}
By Conditions LR1 and LR2, $f_{0}$ is the unique fixed probability measure
of $L_{0}$ in $BS(S^{1})$.
\end{remark}

\begin{remark}
The regularization property called for by Condition LR2 is required only for
the \emph{unperturbed} operator $L_{0}$. In our systems this regularization
is provided by the estimates shown in Section \ref{Reg}.
\end{remark}

\begin{remark}
The mixing assumptions {\ listed in Condition LR1 are} required only for the 
\emph{unperturbed} operator $L_{0}$. This {\ statement will be proved for
certain} classes of systems by a computer-aided proof.
\end{remark}

\begin{remark}
In Theorem \ref{th:linearresponse} the weak norm $\Vert \cdot \Vert _{w}$
could be the $L^{1}$ norm itself. In our case we can prove the existence of
the limit in $(\ref{derivativeoperator})$ with a convergence in the
Wasserstein $\Vert ~\Vert _{W}$ norm, this will be the choice of the weak
norm that will be used in this paper.
\end{remark}

\subsection{\ Linear response for Arnold maps with noise}

\label{ssec:LR_Arnold} {\ We will now apply} the above theorem to a family
of operators $L_{\delta },$ $\delta \in \lbrack 0,\overline{\delta }]$,
which are the annealed transfer operators associated with the Arnold maps
with additive noise defined in $($\ref{proc}$)$. {Recall that the transfer
operator associated to a deterministic transformation $T_{\delta }$ is
defined, as usual by the pushforward map also denoted by $(T_{\delta
})_{\ast },$ by 
\begin{equation}
\lbrack L_{T_{\delta }}(\mu )](A):=[(T_{\delta })_{\ast }\mu ](A):=\mu
(T_{\delta }^{-1}(A))  \label{pish}
\end{equation}%
for each measure with sign $\mu $ and Borel set $A$. When $T$ is nonsingular%
\footnote{%
A Borel map $T:X\rightarrow X$ is a said to be nonsingular with respect to
the Lebesgue measure $m$ if for any measurable $N$ $m(T^{-1}(N))=0\iff
m(N)=0 $ .} $L_{T}$ preserves the space of absolutely continuous finite
measures and can be considered as an operator $L^{1}\rightarrow L^{1}$. }The
transfer operator associated to the Arnold map with additive noise (see \cite%
{LM} or \cite{Viana} for basic facts on transfer operators associated to
random systems) will be the composition of the transfer operator $%
L_{T_{\delta }}$ related to the map 
\begin{equation*}
T_{\delta }:=T_{\tau +\delta ,\epsilon }(x)=x+\tau +\delta -\frac{\epsilon }{%
2\pi }\sin (2\pi x)
\end{equation*}%
and the action of te noise which is given by a convolution. The transfer
operator associated to the system with additive noise $L_{\delta
}:L^{1}\rightarrow L^{1}$ is then given by%
\begin{equation}
\lbrack L_{\delta }f](t)=[\rho _{\xi }\ast L_{T_{\delta }}(f)](t).
\label{1111}
\end{equation}

As said before, to apply Theorem \ref{th:linearresponse} to our case we
consider $\Vert \Vert _{w}$ as the $W$ norm. We need to prove that the
assumptions are satisfied. The most complicated one is the assumption $LR3$,
the existence of the derivative operator

\begin{lemma}
\label{012}The limit defined at $($\ref{derivativeoperator}$)$ exists in $BS$
and the limit converges in the W-norm.%
\begin{equation*}
\underset{\delta \rightarrow 0}{\lim }\left\Vert \frac{(L_{0}-L_{\delta })}{%
\delta }f_{0}-\frac{[\delta _{-\xi }-\delta _{\xi }]}{2\xi }\ast
L_{T_{0}}(f_{0})\right\Vert _{W}=0.
\end{equation*}
\end{lemma}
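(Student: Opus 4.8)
The plan is to exploit the fact that the parameter $\delta$ enters the deterministic map $T_\delta = T_{\tau+\delta,\epsilon}$ only through the additive constant, so that $T_\delta(x) = T_0(x) + \delta \bmod 1$. Consequently the pushforward is a pure translation: the change-of-variables (sum-over-preimages) formula for the transfer operator, which is insensitive to the non-invertibility of $T_0$ occurring when $\epsilon\geq 1$, gives $L_{T_\delta}(f_0)(t) = L_{T_0}(f_0)(t-\delta)$, i.e. $L_{T_\delta}(f_0) = \delta_\delta \ast L_{T_0}(f_0)$. Substituting into \eqref{1111} and using bilinearity and associativity of the convolution, the whole $\delta$-dependence factors through $\rho_\xi$:
\[
\frac{(L_0-L_\delta)}{\delta}f_0 = \frac{1}{\delta}\big[\rho_\xi - \rho_\xi(\cdot-\delta)\big] \ast L_{T_0}(f_0) =: q_\delta \ast L_{T_0}(f_0),
\]
whereas the claimed limit is $D\ast L_{T_0}(f_0)$ with $D := \tfrac{1}{2\xi}(\delta_{-\xi}-\delta_\xi)$, which is nothing but the distributional derivative $\rho_\xi'$ of the piecewise-constant noise density. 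Note that $D\ast L_{T_0}(f_0)$ is a difference of translates of a probability density, hence lies in $L^1\subset BS(S^1)$ and has zero average, consistent with the requirement $\dot{L}f_0\in V_w$ of Condition LR3.

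The key simplification is to peel off the convolution with $L_{T_0}(f_0)$ using the Wasserstein convolution estimate of Lemma \ref{convoo copy(1)}. Since $T_0$ is nonsingular and transfer operators preserve positivity and total mass, $L_{T_0}(f_0)$ is an absolutely continuous probability density, so $L_{T_0}(f_0)\in L^1$ with $\|L_{T_0}(f_0)\|_{L^1}=1$; I deliberately avoid claiming any $BV$ or boundedness control on $L_{T_0}(f_0)$, which could genuinely fail at the fold points present when $\epsilon\geq 1$. Applying Lemma \ref{convoo copy(1)} with $f=q_\delta-D\in BS(S^1)$ and $g=L_{T_0}(f_0)\in L^1$ yields
\[
\Big\| q_\delta\ast L_{T_0}(f_0) - D\ast L_{T_0}(f_0)\Big\|_W \leq \|q_\delta-D\|_W\,\|L_{T_0}(f_0)\|_{L^1} = \|q_\delta-D\|_W .
\]
This decouples the statement entirely from the dynamics: it remains only to show that the difference quotient of the \emph{fixed} function $\rho_\xi$ converges to its distributional derivative in the Wasserstein norm.

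That final estimate is elementary. Writing $\rho_\xi=\tfrac{1}{2\xi}\mathds{1}_{[-\xi,\xi]}$, the function $q_\delta = \tfrac{1}{\delta}\big[\rho_\xi-\rho_\xi(\cdot-\delta)\big]$ equals $+\tfrac{1}{2\xi\delta}$ on an interval of length $\delta$ at the left jump point $-\xi$ and $-\tfrac{1}{2\xi\delta}$ on an interval of length $\delta$ at the right jump point $\xi$, and vanishes elsewhere; thus $q_\delta$ is a pair of rectangular bumps of masses $\pm\tfrac{1}{2\xi}$ approximating the Diracs $\tfrac{1}{2\xi}\delta_{-\xi}$ and $-\tfrac{1}{2\xi}\delta_{\xi}$ that constitute $D$. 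A direct transport bound — each bump's mass must be moved a distance at most $\delta$ to reach the corresponding Dirac — gives $\|q_\delta - D\|_W \leq \tfrac{\delta}{\xi}\to 0$, establishing both that the limit exists in $BS(S^1)$ and that the convergence holds in the $W$-norm. The main conceptual step is the first paragraph's observation that $\delta$ acts purely by translation, so the derivative operator is convolution by $\rho_\xi'$; once this is seen and the convolution is stripped off via Lemma \ref{convoo copy(1)}, no genuine difficulty remains, the only point requiring care being to demand no regularity of $L_{T_0}(f_0)$ beyond $L^1$, since stronger control need not hold for $\epsilon\geq 1$.
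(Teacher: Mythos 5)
Your proof is correct and follows essentially the same route as the paper's: you observe that $\delta$ acts by pure translation so that $\frac{L_0-L_\delta}{\delta}f_0=\frac{\rho_\xi-R_\delta\rho_\xi}{\delta}\ast L_{T_0}(f_0)$, strip off the convolution with $L_{T_0}(f_0)\in L^1$ via Lemma~\ref{convoo copy(1)}, and reduce to the convergence of the difference quotient of $\rho_\xi$ to $\frac{1}{2\xi}(\delta_{-\xi}-\delta_\xi)$ in the $W$-norm. Your explicit transport bound $\Vert q_\delta-D\Vert_W\leq\delta/\xi$ actually supplies a detail the paper's proof only asserts, so no issues here.
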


\begin{proof}
We remark that $L_{T_{0}}(f_{0})\in L^{1}(S^{1})$ furthermore%
\begin{equation*}
L_{0}f_{0}(t)=[\rho _{\xi }\ast L_{T_{0}}(f_{0})](t)=\int_{\mathbb{S}%
^{1}}\rho _{\xi }(t-\tau )[L_{T_{0}}(f_{0})](\tau )d\tau
\end{equation*}%
and

\begin{eqnarray*}
L_{\delta }f_{0}(t) &=&[\rho _{\xi }\ast L_{T_{\delta }}(f_{0})](t)=\int_{%
\mathbb{S}^{1}}\rho _{\xi }(t-\tau )[L_{T_{0}}(f_{0})](\tau -\delta )d\tau \\
&=&\int_{\mathbb{S}^{1}}\rho _{\xi }(t-c-\delta )[L_{T_{0}}(f_{0})](c)dc
\end{eqnarray*}

hence%
\begin{equation}
\left[ \frac{L_{\delta }-L_{0}}{\delta }f_{0}\right] (x)=\int_{\mathbb{S}%
^{1}}\frac{\rho _{\xi }(t-\delta -\tau )-\rho _{\xi }(t-\tau )}{\delta }%
[L_{T_{0}}(f_{0})](\tau )d\tau .  \label{key}
\end{equation}

Denoting by $R_{\delta }:L^{1}\rightarrow L^{1}$ the translation operator
given by $[R_{\delta }f](x)=f(x-\delta )$. It holds that%
\begin{equation}
\frac{L_{\delta }-L_{0}}{\delta }f_{0}=\frac{[R_{-\delta }\rho _{\xi }-\rho
_{\xi }]}{\delta }\ast \lbrack L_{T_{0}}(f_{0})]  \label{(key2)}
\end{equation}%
but 
\begin{equation*}
\lim_{\delta \rightarrow 0}\left\Vert \frac{[R_{-\delta }\rho _{\xi }-\rho
_{\xi }]}{\delta }-\frac{[\delta _{-\xi }-\delta _{\xi }]}{2\xi }\right\Vert
_{W}=0
\end{equation*}%
where $\delta _{-\xi }$ and $\delta _{\xi }$ are the delta measures placed
on $\pm \xi $.

Then by (\ref{conv1}) 
\begin{equation*}
\lim_{\delta \rightarrow 0}\frac{L_{0}-L_{\delta }}{\delta }f_{0}=\frac{%
[\delta _{-\xi }-\delta _{\xi }]}{2\xi }\ast L_{T_{0}}(f_{0})
\end{equation*}%
(recall that $T_{0}$ is nonsingular and then $L_{T_{0}}(f_{0})\in L^{1}$ )
with convergence in the $||~||_{W}$ norm.
\end{proof}

\begin{lemma}
\label{123}The remaining assumptions of Item $LR3$ of Theorem \ref%
{th:linearresponse} are satisfied: 
\begin{equation*}
\left\vert |L_{0}-L_{\delta }|\right\vert _{L^{1}\rightarrow (BS,\Vert \cdot
\Vert _{W})}\leq K\delta ,~and~\left\vert |L_{0}-L_{\delta }|\right\vert
_{BV\rightarrow V}\leq K\delta .
\end{equation*}
\end{lemma}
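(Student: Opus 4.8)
The plan is to reduce both bounds to the convolution estimates of Section~\ref{Reg}, arranged so that the entire factor of $\delta$ is produced by the noise kernel, while the deterministic transfer operator $L_{T_0}$ enters only through its $L^{1}$-contraction property. First I would observe that the computation in the proof of Lemma~\ref{012} never used the specific choice $f=f_{0}$: for any $f\in L^{1}$ the translation structure $T_{\delta}=T_{0}+\delta$ gives $L_{T_{\delta}}(f)=R_{-\delta}L_{T_{0}}(f)$, so that exactly as in \eqref{(key2)} one has
\begin{equation*}
(L_{0}-L_{\delta})f=\mu_{\delta}\ast L_{T_{0}}(f),\qquad \mu_{\delta}:=\rho_{\xi}-R_{-\delta}\rho_{\xi}.
\end{equation*}
Thus everything rests on estimating the single signed measure $\mu_{\delta}$, the discrete increment of the (fixed) noise density under a $\delta$-translation.

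Next I would bound $\mu_{\delta}$ in the two norms that the two target spaces require. Since $\mu_{\delta}$ is the difference of a probability density and its $\delta$-translate, testing against $g$ with $\|g\|_{\infty}\leq1$ and $Lip(g)\leq1$ gives $\int g\,d\mu_{\delta}=\int(g(x)-g(x-\delta))\rho_{\xi}(x)\,dx$, whose modulus is at most $\delta\int\rho_{\xi}=\delta$; hence $\|\mu_{\delta}\|_{W}\leq\delta$. On the other hand $\rho_{\xi}=\frac{1}{\xi}\mathds{1}_{[-\xi/2,\xi/2]}$, so $\mu_{\delta}$ is supported on two intervals of length $|\delta|$ on which it equals $\pm\frac1\xi$, giving the total-variation (equivalently $L^{1}$) bound $\|\mu_{\delta}\|=\|\mu_{\delta}\|_{L^{1}}\leq\frac{2\delta}{\xi}$.

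With these two estimates the conclusions follow immediately. For the first inequality, $(L_{0}-L_{\delta})f\in BS$ and Lemma~\ref{convoo copy(1)} (applied with the $BS$-factor $\mu_{\delta}$ and the $L^{1}$-factor $L_{T_{0}}(f)$) gives $\|(L_{0}-L_{\delta})f\|_{W}\leq\|\mu_{\delta}\|_{W}\,\|L_{T_{0}}(f)\|_{L^{1}}\leq\delta\,\|f\|_{L^{1}}$, using that $L_{T_{0}}$ is an $L^{1}$-contraction. For the second inequality I would first note that $(L_{0}-L_{\delta})f$ lies in $V$: it is an $L^{1}$ function (convolution of two $L^{1}$ objects by Young's inequality) with zero average, since $L_{0}$ and $L_{\delta}$ are Markov operators and hence preserve total mass. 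Then Lemma~\ref{lemma1}, applied with the $L^{1}$-factor $L_{T_{0}}(f)$ and the signed measure $\mu_{\delta}$, yields $\|(L_{0}-L_{\delta})f\|_{L^{1}}\leq\|\mu_{\delta}\|\,\|L_{T_{0}}(f)\|_{L^{1}}\leq\frac{2\delta}{\xi}\|f\|_{L^{1}}\leq\frac{2\delta}{\xi}\|f\|_{BV}$. Taking $K=\max\{1,2/\xi\}$ establishes both claims.

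The step I expect to be the main conceptual obstacle is precisely the routing of the second estimate through Lemma~\ref{lemma1} rather than through Lemma~\ref{lemmconv2}: the latter would force control of $\|L_{T_{0}}(f)\|_{BV}$, but for $\epsilon\geq1$ the map $T_{0}$ has critical points, so $L_{T_{0}}$ does not send $L^{1}$ (nor even $BV$) boundedly into $BV$ — the variation can blow up near critical values. The resolution is that all the smoothing needed is supplied by the noise kernel $\mu_{\delta}$, so one must keep $L_{T_{0}}(f)$ on the ``$L^{1}$ side'' of every convolution inequality and let $\mu_{\delta}$ carry both the factor $\delta$ and the regularity. The only remaining care is bookkeeping: verifying the zero-average property so that the output genuinely lands in $V$, and matching each convolution lemma's hypotheses to the correct factor.
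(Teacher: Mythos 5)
Your proposal is correct and follows essentially the same route as the paper: both factor the difference as $(L_{0}-L_{\delta})f=[\rho_{\xi}-R_{-\delta}\rho_{\xi}]\ast L_{T_{0}}(f)$ and then apply Lemma~\ref{convoo copy(1)} for the $W$-norm bound and Lemma~\ref{lemma1} for the $L^{1}$ bound, with the $O(\delta)$ factor carried entirely by the translated noise kernel. Your version merely adds explicit constants, the zero-average check, and the (correct) remark that one must keep $L_{T_{0}}(f)$ on the $L^{1}$ side since it need not be of bounded variation when $\epsilon\geq 1$.
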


\begin{proof}
Let $f\in L^{1}$. By (\ref{(key2)}) it holds%
\begin{equation*}
\lbrack L_{\delta }-L_{0}]f=[R_{-\delta }\rho _{\xi }-\rho _{\xi }]\ast
\lbrack L_{T_{0}}(f)].
\end{equation*}

Since there is a $K$ such that $||[R_{-\delta }\rho _{\xi }-\rho _{\xi
}]||_{L^{1}}\leq K\delta $ \ by Lemmas \ref{lemma1} and \ref{convoo copy(1)}
\ we directly get the statement.
\end{proof}

All the estimates of this section lead to the following linear response
statement for the systems and perturbations we consider in this work.

\begin{proposition}
\label{proplinresp}Let $T_{0}:S^{1}\rightarrow S^{1}$ be a nonsingular map.
Let $T_{\delta }$ defined as $T_{\delta }(x)=T_{0}(x)+\delta $, let $%
L_{\delta }:L^{1}\rightarrow L^{1}$ be the transfer operator defined as in $(%
\ref{1111})$. Let $f_{\delta }\in L^{1}$ be such that $L_{\delta }f_{\delta
}=f_{\delta }$ (a stationary measure for the system $L_{\delta }$).

Suppose $L_{0}$ is mixing: for every $g\in BV[0,1]$ with $\int_{I}g\,dm=0$,
then%
\begin{equation*}
\lim_{n\rightarrow \infty }\Vert L_{0}^{n}g||_{L^{1}}=0.
\end{equation*}%
(see Assumption LR1 of Theorem \ref{th:linearresponse}) Then $({Id}%
-L_{0})^{-1}$ is a continuous operator on the space of zero average Borel
measures equipped with the $||~||_{W}$ norm and 
\begin{equation}
\lim_{\delta \rightarrow 0}\left\Vert \frac{f_{\delta }-f_{0}}{\delta }-({Id}%
-L_{0})^{-1}[\frac{[\delta _{-\xi }-\delta _{\xi }]}{2\xi }\ast
L_{T_{0}}(f_{0})]\right\Vert _{W}=0.
\end{equation}
\end{proposition}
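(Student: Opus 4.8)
The plan is to realize this Proposition as a direct application of Theorem \ref{th:linearresponse}, taking the weak space $(B_w,\|\cdot\|_w)$ to be $BS(S^1)$ equipped with the Wasserstein norm $\|\cdot\|_W$ (which satisfies $\|\cdot\|_W \leq \|\cdot\|_{L^1}$, so the inclusions $BS \supseteq B_w \supseteq L^1$ required by the theorem hold) and identifying the derivative operator as $\dot{L}f_0 = \frac{[\delta_{-\xi}-\delta_\xi]}{2\xi}\ast L_{T_0}(f_0)$. Thus the entire proof reduces to checking the four hypotheses LR0--LR3 for the family $L_\delta = \rho_\xi \ast L_{T_\delta}(\cdot)$ and then quoting the conclusion \eqref{linresp} of the theorem verbatim.

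First I would dispose of the two cheap conditions. Condition LR0 holds for any system with additive noise: the stationary density exists and lies in $BV$ with a uniformly bounded $BV$-norm, as recorded in the first remark following Theorem \ref{th:linearresponse} (see \cite{GG}, Lemma 23); the uniformity in $\delta$ is natural because the $BV$-regularity is produced entirely by the fixed convolution kernel $\rho_\xi$, independently of $\delta$. Condition LR1 is exactly the mixing property imposed as a standing hypothesis of the Proposition, so there is nothing to prove.

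The substantive verification is LR2, the two-step regularization of $L_0$, and I expect this to be the main obstacle, since it is precisely what forces $({Id}-L_0)^{-1}$ to be continuous on $V_w$. I would factor $L_0 f = \rho_\xi \ast L_{T_0}(f)$ and treat the two required continuities separately. For $L_0:L^1 \to BV$, I use that $T_0$ is nonsingular, so $L_{T_0}$ is a weak contraction on $L^1$, and then Lemma \ref{lemmaconv4} applied with $L_{T_0}(f) \in L^1$ and $\rho_\xi \in BV$ gives $\|L_0 f\|_{BV} \leq \|\rho_\xi\|_{BV}\|f\|_{L^1}$. For $L_0:(BS,\|\cdot\|_W)\to L^1$, the key observation is that the pushforward by the Lipschitz map $T_0$ is bounded in the $W$-norm --- composing a test function $g$ with $T_0$ multiplies its Lipschitz constant by at most $Lip(T_0)$, so $\|L_{T_0}f\|_W \leq \max(1,Lip(T_0))\|f\|_W$; since $L_{T_0}$ preserves total mass, $L_{T_0}f$ has zero average whenever $f$ does, and Lemma \ref{lemmconv2} then yields $\|L_0 f\|_{L^1} \leq 2\|\rho_\xi\|_{BV}\|L_{T_0}f\|_W$. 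This is exactly the $W \to L^1$ smoothing that the general theorem demands.

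Finally, Condition LR3 is already in hand: Lemma \ref{123} supplies the two operator-norm bounds $\|L_0-L_\delta\|_{L^1 \to (BS,\|\cdot\|_W)} \leq K\delta$ and $\|L_0-L_\delta\|_{BV \to V} \leq K\delta$, while Lemma \ref{012} produces the $W$-limit identifying $\dot{L}f_0$. One need only remark that $\dot{L}f_0 \in V_w$, which is immediate because $\delta_{-\xi}-\delta_\xi$ has zero total mass, so its convolution with the density $L_{T_0}(f_0)$ is a zero-average $L^1$ function. With LR0--LR3 verified, Theorem \ref{th:linearresponse} applies and delivers both the continuity of $({Id}-L_0)^{-1}$ on the zero-average Borel measures in the $W$-norm and the stated linear response formula.
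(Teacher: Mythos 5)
Your proposal follows essentially the same route as the paper: a direct application of Theorem \ref{th:linearresponse} with the Wasserstein norm as weak norm, verifying LR2 via Lemmas \ref{lemmconv2} and \ref{lemmaconv4} and LR3 via Lemmas \ref{012} and \ref{123}. The extra details you supply (the Lipschitz pushforward bound in the $W$-norm and the preservation of zero average) are correct and merely flesh out what the paper leaves implicit.
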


\begin{proof}
The proof is a direct application of Theorem \ref{th:linearresponse}. The
assumption $LR2$ of Theorem \ref{th:linearresponse} is a direct consequence
of Lemma \ref{lemmconv2} and Lemma \ref{lemmaconv4}. The assumption $LR3$
and the formula for the derivative operator is proved in Lemmas \ref{012} \
and \ref{123}.
\end{proof}

The linear response in $\Vert ~\Vert _{W}$ is sufficient to deduce the
smoothness of the rotation number because the observable associated to the
rotation number is Lipschitz. As already done in \cite{ZH07} we exploit the
fact that the rotation number can be computed as the integral of a suitable
observable with respect to the stationary measure. \newline
To formulate this precisely, we introduce a few notations:

\begin{enumerate}
\item Denoting $[-\xi/2,\xi/2]^{\mathbb{N}}=\Omega$, we introduce the
one-sided shift $\sigma:\Omega\to\Omega$, classically defined for $%
\boldsymbol{\omega}=(\omega_n)_{n\geq 0}$ by $\sigma(\boldsymbol{\omega}%
)=(\omega_n)_{n\geq 1}$. \newline
We let $\nu=\dfrac{1}{\xi}Leb_{[-\xi/2,\xi/2]}$, and let $\mathbb{P}$ be the
product measure on $\Omega$. It is an invariant probability measure for $%
\sigma$.

\item Let $\phi _{\tau }:\Omega \times \mathbb{R}\rightarrow \mathbb{R}$ be
the map defined by 
\begin{equation*}
\phi _{\tau }(\boldsymbol{\omega },x)=\phi _{\tau }(\omega _{0},x)=\tau
+\omega _{0}-\dfrac{\epsilon }{2\pi }\sin (2x\pi ).
\end{equation*}%
Notice that it is 1-periodic in $x$: thus it induces an observable of the
circle $\mathbb{S}^{1}$. Furthermore, it is the lift of $X_{1}-Id$ to $%
\mathbb{R}$. \newline
We also let $\varphi _{\tau }$ be the lift of $T_{\tau ,\epsilon }-Id$ to $%
\mathbb{R}$. For the same reasons, it induces an observable on $\mathbb{S}%
^{1}$.

\item Finally, we let $F_{\tau }:\Omega \times \mathbb{S}^{1}\rightarrow
\Omega \times \mathbb{S}^{1}$ be the skew-product map 
\begin{equation*}
F_{\tau }(\boldsymbol{\omega },x)=(\sigma (\boldsymbol{\omega }),T_{\tau
,\epsilon }(x)+\omega _{0})=(\sigma (\boldsymbol{\omega }),X_{1}(\omega
_{0},x))
\end{equation*}%
for $\boldsymbol{\omega }=(\omega _{n})_{n\geq 0}\in \Omega $. The product
measure $\mathbb{P}\otimes \mu _{\tau }$ (where $\mu _{\tau }$ is the
stationary measure of the Arnold map $T_{\tau ,\epsilon }$ with uniformly
distributed additive noise of size $\xi $) is invariant, and in the case
where the system satisfies the mixing assumption LR1 is also ergodic for $%
F_{\tau }$ (see \cite{Viana}, Section 5) by this we can now formulate:
\end{enumerate}

\begin{proposition}
\label{integral} Let $T_{\tau ,\epsilon }$ be the {\ Arnold map} with
parameters $(\tau ,\epsilon )$ and uniformly distributed noise of size $\xi $%
, suppose the system satisfies the assumption $LR1$, let $\mu _{\tau }$ be
the corresponding stationary measure and $\rho _{\tau }$ be the associated
rotation number. Then 
\begin{equation}
\rho _{\tau }(\boldsymbol{\omega },x)=\int_{\mathbb{S}^{1}}\varphi _{\tau
}d\mu _{\tau }.  \label{eq:rotnumbasLipobservable}
\end{equation}%
In particular, $\rho _{\tau }$ is $\mathbb{P}\otimes \mu _{\tau }$ almost
surely constant.
\end{proposition}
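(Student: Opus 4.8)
The plan is to recognize the defining limit for $\rho_\tau$ as a Birkhoff average for the skew-product $F_\tau$ and then invoke the ergodicity of $\mathbb{P}\otimes\mu_\tau$ recorded just above the statement. First I would write the lifted process as a telescoping sum: starting from $\tilde{X}_0=x$, one has
\begin{equation*}
\frac{\tilde{X}_n}{n}=\frac{\tilde{X}_0}{n}+\frac{1}{n}\sum_{k=0}^{n-1}\bigl(\tilde{X}_{k+1}-\tilde{X}_k\bigr).
\end{equation*}
The essential bookkeeping step is to identify each increment. Since
\begin{equation*}
\tilde{X}_{k+1}-\tilde{X}_k=\tilde{T}_{\tau,\epsilon}(\tilde{X}_k)-\tilde{X}_k+\omega_k=\tau-\frac{\epsilon}{2\pi}\sin(2\pi\tilde{X}_k)+\omega_k,
\end{equation*}
and since this expression is $1$-periodic in $\tilde{X}_k$, it equals $\phi_\tau(\sigma^k\boldsymbol{\omega},X_k)$, where $X_k$ is the reduction of $\tilde{X}_k$ modulo $1$. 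Writing $(\sigma^k\boldsymbol{\omega},X_k)=F_\tau^k(\boldsymbol{\omega},x)$, the sum becomes exactly the Birkhoff sum $\sum_{k=0}^{n-1}\phi_\tau\circ F_\tau^k$ of the observable $\phi_\tau$ along the orbit of $F_\tau$.

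Next I would apply Birkhoff's ergodic theorem. The observable $\phi_\tau$ is bounded, hence in $L^1(\mathbb{P}\otimes\mu_\tau)$, and under assumption $LR1$ the measure $\mathbb{P}\otimes\mu_\tau$ is ergodic for $F_\tau$ by the result of \cite{Viana} invoked in the construction above. Thus for $\mathbb{P}\otimes\mu_\tau$-almost every $(\boldsymbol{\omega},x)$ the Birkhoff average converges to the space average $\int_{\Omega\times\mathbb{S}^1}\phi_\tau\,d(\mathbb{P}\otimes\mu_\tau)$, a constant. Since $\tilde{X}_0/n\to0$, this shows that the defining limit for $\rho_\tau$ exists almost surely, is independent of $(\boldsymbol{\omega},x)$, and equals this space average; this is precisely the asserted almost-sure constancy.

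Finally I would simplify the integral. Because the noise is symmetric, $\int_\Omega\omega_0\,d\mathbb{P}=\frac{1}{\xi}\int_{-\xi/2}^{\xi/2}\omega_0\,d\omega_0=0$, so the $\omega_0$-term in the decomposition $\phi_\tau=\varphi_\tau+\omega_0$ integrates to zero. Using the product structure of $\mathbb{P}\otimes\mu_\tau$ and the fact that $\varphi_\tau$ depends only on the circle coordinate, one obtains
\begin{equation*}
\int_{\Omega\times\mathbb{S}^1}\phi_\tau\,d(\mathbb{P}\otimes\mu_\tau)=\int_{\mathbb{S}^1}\varphi_\tau\,d\mu_\tau,
\end{equation*}
which is the formula \eqref{eq:rotnumbasLipobservable}.

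The main obstacle is not a single estimate but the two structural inputs on which the argument rests. The first is the passage from the process $\tilde{X}_n$ on $\mathbb{R}$ to an honest Birkhoff sum for the compact skew-product, which relies crucially on the $1$-periodicity of $\phi_\tau$ so that it descends to an observable on $\Omega\times\mathbb{S}^1$. The second is the ergodicity of $\mathbb{P}\otimes\mu_\tau$, and this is exactly where the mixing hypothesis $LR1$ is needed. Once both are in place, the remainder is a routine application of Birkhoff's theorem together with the elementary vanishing of the noise mean.
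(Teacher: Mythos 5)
Your proof is correct and follows essentially the same route as the paper's: identify the rotation-number limit with the Birkhoff average of $\phi_\tau$ along the skew product $F_\tau$, invoke Birkhoff's theorem under the ergodicity of $\mathbb{P}\otimes\mu_\tau$ granted by LR1, and integrate out the zero-mean noise to reduce $\phi_\tau$ to $\varphi_\tau$. Your telescoping decomposition is just a more explicit writing of the paper's identity $\frac{1}{N}\sum_{n=0}^{N-1}\phi_\tau\circ F^n=(\hat{X}_N-\mathrm{Id})/N$.
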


\begin{proof}
With the notation of \eqref{proc} one can write the $n$-th iterate of
the skew-product map as 
\begin{equation*}
F^{n}(\boldsymbol{\omega },x)=(\sigma ^{n}(\boldsymbol{\omega }%
),X_{n}(\omega _{n-1},\dots ,\omega _{0},x)).
\end{equation*}%
Considering the Birkhoff sum associated to this system and the observable $%
\phi _{\tau }$, one has: 
\begin{equation*}
\dfrac{1}{N}\sum_{n=0}^{N-1}\phi _{\tau }\circ F^{n}=\dfrac{\hat{X}_{N}-Id}{N%
},
\end{equation*}%
where $\hat{X}_{N}$ as before is the lift of $X_{N}$ to $\mathbb{R}$. Here
we commit a slight abuse of notation, as $x\in \mathbb{S}^{1}$. Note however
that this abuse is justified by the fact that $\hat{X}_{N}-Id$ is a
one-periodic map. This Birkhoff sum is the lift of $\dfrac{1}{N}(X_{N}-Id)$:
by definition of the rotation number, this right-hand side converges, as $%
N\rightarrow \infty $, to $\rho _{\tau }(x,\boldsymbol{\omega })$. \newline
But by Birkhoff theorem, the left-hand side $\phi _{\tau }$ converges to 
\begin{equation*}
\int_{\mathbb{S}^{1}}\int_{\Omega }\phi _{\tau }(\boldsymbol{\omega },x)d%
\mathbb{P}(\boldsymbol{\omega })d\mu _{\tau }(x)=\int_{\mathbb{S}%
^{1}}\int_{[-\xi /2,\xi /2]}\phi _{\tau }(\omega _{0},x)d\nu (\omega
_{0})d\mu _{\tau }(x).
\end{equation*}%
Now, it is easy to see that for fixed $x\in \mathbb{S}^{1}$, one has: 
\begin{equation*}
\int_{\lbrack -\xi /2,\xi /2]}\phi _{\tau }(\omega _{0},x)d\nu =\tau -\dfrac{%
\epsilon }{2\pi }\sin (2\pi x)=\varphi _{\tau }(x).
\end{equation*}%
Thus one obtains \eqref{eq:rotnumbasLipobservable}, as announced.

\end{proof}

\begin{corollary}
\label{corfin} The rotation number of the {\ Arnold maps} with uniformly
distributed additive noise is differentiable at each value of the parameter $%
\tau $ for which the associated system is mixing (in the sense stated in
assumption LR1 and Proposition \ref{proplinresp}). Furthermore if $\tau _{0}$
is such a parameter we get the following formula for the derivative of the
rotation number computed at $\tau _{0}$%
\begin{equation}
\lbrack \frac{d}{d\tau }\rho _{\tau }](\tau _{0})=1+\int_{\mathbb{S}%
^{1}}\varphi _{\tau _{0}}~d[({Id}-L_{\tau _{0}})^{-1}\frac{[\delta _{-\xi
}-\delta _{\xi }]}{2\xi }\ast L_{T_{\tau _{0}}}(\mu _{\tau _{0}})]
\label{formfin}
\end{equation}%
where $L_{T_{\tau _{0}},\epsilon }$ is the pushforward operator of the map $%
T_{\tau _{0},\epsilon }$.
\end{corollary}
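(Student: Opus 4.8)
The plan is to combine the two previously established ingredients: Proposition~\ref{integral}, which expresses the rotation number as the integral of the Lipschitz observable $\varphi_\tau$ against the stationary measure $\mu_\tau$, and Proposition~\ref{proplinresp}, which gives linear response for $\mu_\tau$ in the Wasserstein norm. The key point is that differentiating $\rho_\tau = \int_{\mathbb{S}^1}\varphi_\tau\,d\mu_\tau$ with respect to $\tau$ produces two contributions, one from the $\tau$-dependence of the observable $\varphi_\tau$ and one from the $\tau$-dependence of the measure $\mu_\tau$.

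First I would isolate the two terms. Write, for the increment from $\tau_0$ to $\tau_0+\delta$,
\begin{equation*}
\frac{\rho_{\tau_0+\delta}-\rho_{\tau_0}}{\delta}
=\int_{\mathbb{S}^1}\frac{\varphi_{\tau_0+\delta}-\varphi_{\tau_0}}{\delta}\,d\mu_{\tau_0+\delta}
+\int_{\mathbb{S}^1}\varphi_{\tau_0}\,d\Bigl(\frac{\mu_{\tau_0+\delta}-\mu_{\tau_0}}{\delta}\Bigr).
\end{equation*}
For the first term, I would use that $\varphi_\tau(x)=\tau-\frac{\epsilon}{2\pi}\sin(2\pi x)$ depends on $\tau$ only through the additive constant $\tau$, so $\frac{\varphi_{\tau_0+\delta}-\varphi_{\tau_0}}{\delta}\equiv 1$; since $\mu_{\tau_0+\delta}$ is a probability measure, this term equals $1$ exactly for every $\delta$, which accounts for the leading $1$ in formula~\eqref{formfin}.

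For the second term, the idea is that $\varphi_{\tau_0}$ is a bounded Lipschitz observable on $\mathbb{S}^1$, so it is (up to the normalization in the definition of $\|\cdot\|_W$) an admissible test function for the Wasserstein-Kantorovich norm. Hence the integral $\int \varphi_{\tau_0}\,d\nu$ is a continuous linear functional on $(BS(S^1),\|\cdot\|_W)$. Applying this functional to the convergence in~\eqref{linresp} of Proposition~\ref{proplinresp}, which is precisely convergence of $\frac{\mu_{\tau_0+\delta}-\mu_{\tau_0}}{\delta}$ to $({Id}-L_{\tau_0})^{-1}\bigl[\frac{[\delta_{-\xi}-\delta_\xi]}{2\xi}\ast L_{T_{\tau_0}}(\mu_{\tau_0})\bigr]$ in $\|\cdot\|_W$, I would pass to the limit and obtain exactly the integral term in~\eqref{formfin}. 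The mixing assumption LR1 is what licenses the use of Proposition~\ref{proplinresp} at the parameter value $\tau_0$, and also guarantees (via the ergodicity noted before Proposition~\ref{integral}) that $\rho_{\tau_0}$ is well defined as the single number given by the integral formula.

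The main obstacle, which is really the only nontrivial analytic step, is justifying the interchange of limit and integral in the second term: the measure against which $\varphi_{\tau_0}$ is integrated is $\mu_{\tau_0+\delta}$ at finite $\delta$, not the fixed limiting distribution. I would handle this by noting that $\varphi_{\tau_0}$ is bounded and Lipschitz, so testing against it is $\|\cdot\|_W$-continuous, and that $\frac{\mu_{\tau_0+\delta}-\mu_{\tau_0}}{\delta}$ is a zero-average signed measure converging in $\|\cdot\|_W$; continuity of the linear functional then gives the interchange directly, with no need to control $\mu_{\tau_0+\delta}$ separately beyond the linear-response convergence already in hand. This is exactly the reason, emphasized in the text, why establishing linear response in the \emph{Wasserstein} norm (rather than a stronger norm) suffices: the observable relevant to the rotation number is Lipschitz.
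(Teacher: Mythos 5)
Your proposal is correct and follows essentially the same route as the paper's own proof: the same splitting of the increment into an observable-variation term (identically $1$) and a measure-variation term, with the latter handled by the $\Vert\cdot\Vert_{W}$ linear response of Proposition~\ref{proplinresp}. The only difference is that you make explicit the continuity of integration against a Lipschitz observable in the Wasserstein norm, a step the paper leaves implicit in ``the statement directly follows.''
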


\begin{proof}
By Proposition \ref{integral} the rotation number is the integral of a
Lipschitz observable. Considering the increment of $\rho _{\tau }$we get%
\begin{eqnarray*}
\frac{\rho _{\tau }(\tau _{0}-h)-\rho _{\tau }(\tau _{0})}{h} &=&\frac{1}{h}%
\left[ \int_{\mathbb{S}^{1}}\varphi _{\tau _{0}+h}~d\mu _{\tau _{0}+h}-\int_{%
\mathbb{S}^{1}}\varphi _{\tau _{0}}d\mu _{\tau _{0}}\right] \\
&=&\frac{1}{h}\left[ \int_{\mathbb{S}^{1}}\varphi _{\tau _{0}+h}~d\mu _{\tau
_{0}+h}-\int_{\mathbb{S}^{1}}\varphi _{\tau _{0}}d\mu _{\tau _{0+h}}\right]
\\
&&+\frac{1}{h}\left[ \int_{\mathbb{S}^{1}}\varphi _{\tau _{0}}~d\mu _{\tau
_{0}+h}-\int_{\mathbb{S}^{1}}\varphi _{\tau _{0}}d\mu _{\tau _{0}}\right] .
\end{eqnarray*}%
Here%
\begin{equation*}
\frac{1}{h}\left[ \int_{\mathbb{S}^{1}}\varphi _{\tau _{0}+h}~d\mu _{\tau
_{0}+h}-\int_{\mathbb{S}^{1}}\varphi _{\tau _{0}}d\mu _{\tau _{0+h}}\right]
=\int_{\mathbb{S}^{1}}\frac{\varphi _{\tau _{0}+h}-\varphi _{\tau _{0}}}{h}%
d\mu _{\tau _{0+h}}=1
\end{equation*}%
and%
\begin{equation*}
\frac{1}{h}\left[ \int_{\mathbb{S}^{1}}\varphi _{\tau _{0}}~d\mu _{\tau
_{0}+h}-\int_{\mathbb{S}^{1}}\varphi _{\tau _{0}}d\mu _{\tau _{0}}\right]
=\int_{\mathbb{S}^{1}}\varphi _{\tau _{0}}d[\frac{\mu _{\tau _{0}+h}-\mu
_{\tau _{0}}}{h}]
\end{equation*}%
and the statement directly follows from Proposition \ref{proplinresp}.
\end{proof}

In next section we will show explicit examples of cases in which the system
is mixing and $($\ref{formfin}$)$ holds.

\section{{\ Mixing rate properties}\label{comptool}}

In this section we show families of Arnold maps with uniformly distributed
additive noise which are mixing systems in the sense of Assumption $LR1$ of
Theorem \ref{th:linearresponse}. By applying Corollary \ref{corfin} we then
get differentiability of the rotation number in these sets of examples. The
method used in the verification of the mixing assumption is based on a
computer aided estimate and a further "stability of mixing" estimate.

In \cite{GMN}, Section 4 it is shown how to use a computer aided estimate to
prove that a given system with additive noise is mixing. The algorithm is
based on the approximation of the system's transfer operator with a finite
rank operator (a finite element approach). In this approximation strategy it
is possible to get explicit bounds to the various approximation errors. The
system is then approximated by a finite Markov chain whose behavior can be
rigorously investigated by the computer (again with rigorous bounds on the
numerical errors provided by a suitable implementation using interval
arithmetics). Putting together the information coming from the certified
estimates done by the computer and the explicit functional analytic\
estimates on the approximation we can extract information on the behavior of
the original system. Given a system made by a map of the interval with
additive noise of range $\xi $ and associated transfer operator $L$, the
algorithm\footnote{%
The algorithm and the code used in this work (see Note \ref{notacode}) is
almost identical to the one used in \cite{GMN}. The only important
difference is the fact that in our code the convolution on $\mathbb{S}^{1}$
is implemented, while in the original work of \cite{GMN} a reflecting
boundaries convolution is considered.} can certify an $\alpha <1$ and $n\in 
\mathbb{N}$ such that $||L^{n}||_{V\rightarrow L^{1}}\leq \alpha $ implying
exponential contraction of the zero average space.

Since we want to obtain that this assumption is satisfied in some large set
of examples, we have to perform a slightly more complicated construction. We
first show in Subsection \ref{allarga} that if a system with noise is mixing
then also an open set of nearby systems are mixing, this is done also giving
an explicit estimate for the radius of this open neighborhood. Then, in
Subsection \ref{exp1} we apply the computer aided estimates to a certain
suitable finite family of systems (a kind of $\epsilon $-grid in the space
of parameters). We then get that these systems satisfy $LR1$ with related
neighborhoods covering a large set in the parameters spaces. Putting these
two steps together we hence have a large set of parameters on which $LR1$
applies.

\subsection{Rate of mixing and perturbations\label{allarga}}

Suppose a given system with additive noise is proved to be mixing. In this
section we provide the theoretical tools to extend the mixing to nearby
systems, showing that mixing is indeed stable when the system is suitably
perturbed. We provide quantitative estimates on this stability. Another
application of these estimates is to provide mixing and mixing rate of a
system when the noise distribution is changed. For example getting mixing
rate for the gaussian noise once the mixing rate for a suitable uniform
noise is established.

\subsubsection{Perturbing the map}

In this subsection we start considering perturbations of the map. We compare
the mixing rate of a given system with the one of a system where the
deterministic part of the dynamics gets a small perturbation in $%
||~||_{\infty }$ norm.

\begin{definition}
A piecewise continuous map $T$ on $[0,1]$ is a function $T:[0,1]\rightarrow
\lbrack 0,1]$ such that there is partition $\{I_{i}\}_{1\leq i\leq k}$ of $%
[0,1]$ made of intervals $I_{i}$ such that $T$ has a continuous extension to
the closure $\bar{I}_{i}$ of each interval.
\end{definition}

\begin{definition}
(notations) Let us define the convolution operator $N:L^{1}\rightarrow BV$
defined by%
\begin{equation}
Nf=\rho \ast f  \label{NNN}
\end{equation}%
and $L_{T_{1}},L_{T_{2}}$ be the the transfer operators associated to the
maps $T_{1},T_{2}$. Let $V$ be the set of zero average measures in $L^{1}$
as defined in $($\ref{d1}$).$
\end{definition}

In this framework we prove the following

\begin{proposition}
\label{111}Let $T_{1}$ and $T_{2}:[0,1]\rightarrow \lbrack 0,1]$ be
piecewise continuous nonsingular maps and $\rho \in BV$. With the notations
introduced above, for any $f\in L^{1}$it holds%
\begin{equation*}
||(NL_{T_{1}})^{n}f-(NL_{T_{2}})^{n}f||_{L^{1}}\leq 2n\Vert T_{1}-T_{2}\Vert
_{L^{\infty }}\cdot \Vert \rho \Vert _{BV}\cdot \Vert f\Vert _{{L}^{1}}.
\end{equation*}
\end{proposition}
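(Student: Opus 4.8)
The plan is to reduce the $n$-step comparison to a one-step comparison by a telescoping identity, and then to control the one-step error by combining the regularization estimate of Lemma~\ref{lemmconv2} with a Wasserstein bound on the difference of the two pushforwards. Writing $A:=NL_{T_{1}}$ and $B:=NL_{T_{2}}$, I would start from the algebraic identity
\begin{equation*}
A^{n}-B^{n}=\sum_{k=0}^{n-1}A^{\,n-1-k}\,(A-B)\,B^{k},
\end{equation*}
apply both sides to $f$ and take $L^{1}$-norms, so that by the triangle inequality it suffices to bound each of the $n$ summands $\Vert A^{\,n-1-k}(A-B)B^{k}f\Vert_{L^{1}}$ by $2\Vert T_{1}-T_{2}\Vert_{L^{\infty}}\Vert\rho\Vert_{BV}\Vert f\Vert_{L^{1}}$. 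The factor $n$ in the final estimate will come precisely from the number of terms in this sum.

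For each summand I would first record that, since in our application $\rho$ is the probability density of the noise and each $T_{i}$ is nonsingular, the factors $A=NL_{T_{1}}$ and $B=NL_{T_{2}}$ are Markov operators: $N$ is convolution against $\rho$, hence an $L^{1}$-contraction by Lemma~\ref{lemma1} (with $\Vert\rho\Vert=1$), while the transfer operators $L_{T_{i}}$ are $L^{1}$-contractions as well. Consequently $\Vert A^{\,n-1-k}h\Vert_{L^{1}}\leq\Vert h\Vert_{L^{1}}$ and $\Vert B^{k}f\Vert_{L^{1}}\leq\Vert f\Vert_{L^{1}}$, so it remains to estimate the single middle step $\Vert(A-B)g\Vert_{L^{1}}$ with $g:=B^{k}f$, where $\Vert g\Vert_{L^{1}}\leq\Vert f\Vert_{L^{1}}$. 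Since $A-B=N(L_{T_{1}}-L_{T_{2}})$, I would write $(A-B)g=\rho\ast(L_{T_{1}}g-L_{T_{2}}g)$ and observe that $L_{T_{1}}g-L_{T_{2}}g$ has zero average, because transfer operators preserve the integral; it therefore lies in the zero-average space to which Lemma~\ref{lemmconv2} applies (using commutativity of the convolution), giving
\begin{equation*}
\Vert\rho\ast(L_{T_{1}}g-L_{T_{2}}g)\Vert_{L^{1}}\leq 2\,\Vert L_{T_{1}}g-L_{T_{2}}g\Vert_{W}\,\Vert\rho\Vert_{BV}.
\end{equation*}

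The crux of the argument, and the step I expect to be the main obstacle, is the Wasserstein bound on the difference of the two pushforwards, namely $\Vert L_{T_{1}}g-L_{T_{2}}g\Vert_{W}\leq\Vert T_{1}-T_{2}\Vert_{L^{\infty}}\Vert g\Vert_{L^{1}}$. Here I would test against an arbitrary $h$ with $Lip(h)\leq 1$ and $\Vert h\Vert_{\infty}\leq 1$, use the change of variables for the pushforward to write $\int h\,d(L_{T_{1}}g-L_{T_{2}}g)=\int_{S^{1}}\bigl(h(T_{1}x)-h(T_{2}x)\bigr)g(x)\,dx$, and bound the integrand pointwise by $Lip(h)\,d_{S^{1}}(T_{1}x,T_{2}x)\leq\Vert T_{1}-T_{2}\Vert_{L^{\infty}}$; taking the supremum over admissible $h$ then yields the claim (note that only the $Lip(h)\leq 1$ constraint is used). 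Substituting this into the previous display gives $\Vert(A-B)g\Vert_{L^{1}}\leq 2\Vert T_{1}-T_{2}\Vert_{L^{\infty}}\Vert\rho\Vert_{BV}\Vert f\Vert_{L^{1}}$ for each of the $n$ summands, and summing completes the proof. The points that require care are the zero-average property that licenses the use of Lemma~\ref{lemmconv2} (this is what produces the decisive gain from the weak $W$-norm back to the $L^{1}$-norm, exploiting the smoothing by $\rho$), the $L^{1}$-contractivity of the Markov factors, and the circle-metric interpretation of $\Vert T_{1}-T_{2}\Vert_{L^{\infty}}$ in the Lipschitz estimate.
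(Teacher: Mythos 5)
Your proposal is correct and follows essentially the same route as the paper: the telescoping identity reduces everything to the one-step bound, which the paper isolates as Lemma~\ref{prt copy(1)} and proves exactly as you do, via Lemma~\ref{lemmconv2} applied to the zero-average difference $L_{T_1}g-L_{T_2}g$ convolved with $\rho$, combined with the Wasserstein estimate $\Vert L_{T_1}g-L_{T_2}g\Vert_W\leq\Vert T_1-T_2\Vert_{L^\infty}\Vert g\Vert_{L^1}$. Your explicit duality argument for that Wasserstein estimate is in fact more detailed than the paper's, which merely asserts it is straightforward by approximating $f$ with combinations of Dirac masses.
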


Before the proof of Proposition \ref{111} we need some preliminary lemmas.

\begin{lemma}
Let $T_{1}$ and $T_{2}:[0,1]\rightarrow \lbrack 0,1]$ be piecewise
continuous nonsingular maps and let $L_{1},L_{2}$ the associated
deterministic transfer operators, let $f\in L^{1}$. Then%
\begin{equation*}
\Vert L_{T_{1}}(f)-L_{T_{2}}(f)\Vert _{W}\leq \Vert T_{1}-T_{2}\Vert
_{L^{\infty }}\cdot \Vert f||_{L^{1}}.
\end{equation*}
\end{lemma}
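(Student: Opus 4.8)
The plan is to reduce everything to the Kantorovich duality defining $\Vert \cdot \Vert _{W}$ and to the basic change-of-variables identity relating the pushforward operator to composition. First I would recall that, for a nonsingular map $T$ and a density $f\in L^{1}$, the pushforward $L_{T}f$ satisfies
\begin{equation*}
\int_{[0,1]}g\,d(L_{T}f)=\int_{[0,1]}(g\circ T)\,f\,dm
\end{equation*}
for every bounded measurable $g$. This is just the definition $[(T)_{\ast }\mu ](A)=\mu (T^{-1}(A))$ read through the standard approximation of $g$ by simple functions, and it is exactly where the nonsingularity of $T$ is used, so that $L_{T}f$ is a genuine $L^{1}$ density rather than a singular measure. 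Applying this to $T_{1}$ and $T_{2}$ and subtracting gives
\begin{equation*}
\int_{[0,1]}g\,d(L_{T_{1}}f-L_{T_{2}}f)=\int_{[0,1]}\big(g\circ T_{1}-g\circ T_{2}\big)\,f\,dm.
\end{equation*}

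Next I would take any test function $g$ admissible in \eqref{Wass}, i.e. with $\Vert g\Vert _{\infty }\leq 1$ and $Lip(g)\leq 1$, and use the Lipschitz bound pointwise:
\begin{equation*}
\big|g(T_{1}(x))-g(T_{2}(x))\big|\leq Lip(g)\,\big|T_{1}(x)-T_{2}(x)\big|\leq \Vert T_{1}-T_{2}\Vert _{L^{\infty }}
\end{equation*}
for almost every $x$. Substituting this into the integral above and bounding the remaining factor by $\int |f|\,dm$ yields
\begin{equation*}
\left\vert \int_{[0,1]}g\,d(L_{T_{1}}f-L_{T_{2}}f)\right\vert \leq \Vert T_{1}-T_{2}\Vert _{L^{\infty }}\cdot \Vert f\Vert _{L^{1}}.
\end{equation*}
Taking the supremum over all such $g$ then gives precisely $\Vert L_{T_{1}}(f)-L_{T_{2}}(f)\Vert _{W}\leq \Vert T_{1}-T_{2}\Vert _{L^{\infty }}\cdot \Vert f\Vert _{L^{1}}$, which is the claim.

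The only genuinely delicate point is the duality identity between pushforward and composition, and in particular making sure it remains valid for merely piecewise continuous nonsingular maps; once that is in hand, the rest is a direct application of the Lipschitz estimate and the definition of the Wasserstein norm. I would also add the brief remark that, since the ambient phase space is really $S^{1}=\mathbb{R}/\mathbb{Z}$, the Lipschitz constant in \eqref{Wass} is taken with respect to the circle metric, for which $d_{S^{1}}(T_{1}(x),T_{2}(x))\leq |T_{1}(x)-T_{2}(x)|$; hence the $L^{\infty }$ bound on the ambient difference still controls the increment of $g$ and the argument goes through unchanged.
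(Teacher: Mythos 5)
Your proof is correct, but it takes a genuinely different route from the paper's. The paper disposes of this lemma in one line: it approximates $f$ by a finite combination of Dirac masses (using the density of such combinations in the $W$-norm, recalled in Section~\ref{ssec:LR_mix}), observes that $L_{T_i}\delta_x=\delta_{T_i(x)}$ so that $\Vert \delta_{T_1(x)}-\delta_{T_2(x)}\Vert_W\leq |T_1(x)-T_2(x)|$, and invokes the uniform continuity of each branch of $T_1,T_2$ to control the error committed in passing from $f$ to its discrete approximation under the pushforward. You instead work directly through the duality identity $\int g\,d(L_Tf)=\int (g\circ T)\,f\,dm$ and the pointwise Lipschitz bound $|g(T_1(x))-g(T_2(x))|\leq \Vert T_1-T_2\Vert_{L^\infty}$ for admissible test functions $g$. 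Your argument is cleaner and slightly more general: it needs no approximation step and no continuity of the branches at all — only measurability of $T_1,T_2$ (nonsingularity enters, as you say, only to keep $L_{T_i}f$ in $L^1$, and is not even needed for the $W$-norm estimate itself, which makes sense for arbitrary pushforward measures in $BS(S^1)$). The paper's route, by contrast, leans on the density of Dirac combinations already set up earlier in the text, which is why it can be stated as "straightforward" there. Your closing remark on the circle metric versus the ambient difference $|T_1(x)-T_2(x)|$ is a worthwhile precision that the paper glosses over, since the $W$-norm is defined on $BS(S^1)$ while the lemma is phrased for maps of $[0,1]$.
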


\begin{proof}
The proof of the statement is straightforward, using the uniform continuity
of each branch of $T_{1}$ and $T_{2}$ to suitably approximate $f$ with a
combination of delta measures.
\end{proof}

\begin{lemma}
\label{prt copy(1)}Let $T_{1}$ and $T_{2}:[0,1]\rightarrow \lbrack 0,1]$ be
piecewise continuous nonsingular maps and $\rho \in BV.$ Let the associated
transfer operators with additive noise diven by the kernel $\rho $ be $%
NL_{T_{1}}$, $NL_{T_{2}}$, then for any $f\in L^{1}$ it holds%
\begin{equation*}
\Vert NL_{T_{1}}(f)-NL_{T_{2}}(f)||_{L^{1}}\leq 2\Vert T_{1}-T_{2}\Vert
_{L^{\infty }}\cdot \Vert \rho ||_{BV}\cdot \Vert f||_{L^{1}}.
\end{equation*}
\end{lemma}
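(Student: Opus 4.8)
The plan is to combine the two preceding lemmas with the regularization estimate from Lemma~\ref{lemmconv2}. First I would note that the statement I am proving, Lemma~\ref{prt copy(1)}, is precisely the one-step $L^1$ bound, and the operator $N$ is the convolution $Nf = \rho * f$. The key observation is that the difference $L_{T_1}(f) - L_{T_2}(f)$ lives in the zero-average space: both $L_{T_1}$ and $L_{T_2}$ are Markov operators, so they preserve total mass, whence $(L_{T_1}(f) - L_{T_2}(f))(S^1) = f(S^1) - f(S^1) = 0$. This is the crucial structural fact that lets me upgrade a bound in the weak Wasserstein norm into a bound in $L^1$ after convolving with a $BV$ kernel.

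Concretely, I would write $NL_{T_1}(f) - NL_{T_2}(f) = \rho * (L_{T_1}(f) - L_{T_2}(f))$ and set $g := L_{T_1}(f) - L_{T_2}(f)$. Since $g \in BS(S^1)$ with $g(S^1)=0$ and $\rho \in BV$, Lemma~\ref{lemmconv2} applies and gives
\begin{equation*}
\Vert \rho * g \Vert_{L^1} \leq 2 \Vert g \Vert_W \cdot \Vert \rho \Vert_{BV}.
\end{equation*}
It then remains to control $\Vert g \Vert_W = \Vert L_{T_1}(f) - L_{T_2}(f) \Vert_W$, which is exactly the content of the immediately preceding unnamed lemma: it is bounded by $\Vert T_1 - T_2 \Vert_{L^\infty} \cdot \Vert f \Vert_{L^1}$. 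Substituting this in yields the claimed factor of $2 \Vert T_1 - T_2 \Vert_{L^\infty} \cdot \Vert \rho \Vert_{BV} \cdot \Vert f \Vert_{L^1}$, completing the argument.

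The two ingredients fit together with essentially no extra work, so the proof should be short. The only point requiring a moment of care is verifying that $g$ really has zero average, so that Lemma~\ref{lemmconv2} (whose hypothesis is $f(S^1)=0$) is legitimately invoked; this is where the Markov (mass-preserving) property of the deterministic transfer operators is used, and it is the conceptual heart of why convolving against a $BV$ kernel produces an honest $L^1$ gain rather than merely a Wasserstein estimate. I expect the only genuine obstacle to be making sure the hypotheses of the two cited lemmas are met verbatim — in particular that $\rho \in BV$ and that nonsingularity of $T_1, T_2$ guarantees $L_{T_1}(f), L_{T_2}(f) \in L^1$ so the Wasserstein estimate and the convolution regularization both apply — but once those are checked the chain of inequalities is immediate.
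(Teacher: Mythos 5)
Your proposal is correct and follows essentially the same route as the paper: write the difference as $\rho \ast (L_{T_1}(f)-L_{T_2}(f))$, apply Lemma~\ref{lemmconv2} to get the factor $2\Vert\rho\Vert_{BV}$ times the Wasserstein norm of the difference, and bound that Wasserstein norm by the preceding lemma. You even make explicit the zero-average hypothesis (mass preservation by the deterministic transfer operators) that the paper's terse proof leaves implicit.
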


\begin{proof}
Indeed, 
\begin{eqnarray*}
\Vert NL_{T_{1}}(f)-NL_{T_{2}}(f)||_{L^{1}} &=&\Vert \rho \ast
(NL_{T_{1}}(f)-NL_{T_{2}}(f))||_{L^{1}} \\
&\leq &2\Vert \rho \Vert _{BV}\cdot \Vert L_{T_{1}}(f)-L_{T_{2}}(f)\Vert
_{W}.
\end{eqnarray*}%
Using Lemma \ref{lemmconv2}\ in the last estimate, which gives the statement.
\end{proof}

\begin{proof}[Proof of Proposition \protect\ref{111}]
We have%
\begin{eqnarray*}
(NL_{T_{1}})^{n}-(NL_{T_{2}})^{n}
&=&\sum_{k=1}^{n}(NL_{T_{1}})^{n-k}(NL_{T_{1}}-NL_{T_{2}})(NL_{T_{2}})^{k-1}
\\
||(NL_{T_{1}})^{n}-(NL_{T_{2}})^{n}||_{L^{1}\rightarrow L^{1}} &\leq
&n||NL_{T_{1}}-NL_{T_{2}}||_{L^{1}\rightarrow L^{1}}.
\end{eqnarray*}%
Estimating $||NL_{T_{1}}-NL_{T_{2}}||_{L^{1}\rightarrow L^{1}}$ by Lemma \ref%
{prt copy(1)} we get the statement.
\end{proof}

The following corollary directly follow from Proposition \ref{prt copy(1)}
and show how to use it to estimate the rate of mixing of a perturbation

\begin{corollary}
\label{ext1} If%
\begin{equation}
||(NL_{T_{1}})^{n}f||_{V\rightarrow L^{1}}\leq \alpha <1  \label{lei}
\end{equation}%
then 
\begin{equation*}
||(NL_{T_{2}})^{n}f||_{V\rightarrow L^{1}}\leq \alpha +2n\Vert
T_{1}-T_{2}\Vert _{L^{\infty }}\cdot \Vert \rho \Vert _{BV}.
\end{equation*}
\end{corollary}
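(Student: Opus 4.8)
The plan is to derive the corollary directly from Proposition \ref{111} by a single application of the triangle inequality, so that all of the analytic content is already carried by that proposition. The object to be bounded is the restricted operator norm $\Vert (NL_{T_2})^n\Vert_{V\rightarrow L^1}$, which by definition equals the supremum of $\Vert (NL_{T_2})^n f\Vert_{L^1}$ over all $f\in V$ with $\Vert f\Vert_{L^1}\leq 1$. Thus it suffices to estimate $\Vert (NL_{T_2})^n f\Vert_{L^1}$ uniformly over such $f$.

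First I would fix an admissible $f$ and insert the intermediate term $(NL_{T_1})^n f$, writing
\begin{equation*}
\Vert (NL_{T_2})^n f\Vert_{L^1}\leq \Vert (NL_{T_1})^n f\Vert_{L^1}+\Vert (NL_{T_1})^n f-(NL_{T_2})^n f\Vert_{L^1}.
\end{equation*}
The first summand is controlled by the hypothesis \eqref{lei}: since $f\in V$ and $\Vert f\Vert_{L^1}\leq 1$, it is at most $\alpha$. The second summand is precisely the quantity estimated by Proposition \ref{111}, which bounds it by $2n\Vert T_1-T_2\Vert_{L^\infty}\cdot\Vert\rho\Vert_{BV}\cdot\Vert f\Vert_{L^1}\leq 2n\Vert T_1-T_2\Vert_{L^\infty}\cdot\Vert\rho\Vert_{BV}$, again using $\Vert f\Vert_{L^1}\leq 1$. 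Adding the two bounds and then passing to the supremum over all admissible $f$ gives exactly the claimed inequality.

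I do not expect any genuine obstacle: the corollary is a formal consequence of the stability estimate already proved. The only point worth verifying is that the setup is internally consistent, namely that $NL_{T_i}$ maps $V$ into itself so that the restricted $V\rightarrow L^1$ norms are the correct objects to compare. This holds because each transfer operator $L_{T_i}$ of a nonsingular map preserves total mass, and convolution with the probability density $\rho$ likewise preserves total mass, so $NL_{T_i}$ preserves the zero-average condition. Consequently the hypothesis \eqref{lei} may legitimately be applied to the vector $f$ appearing in the decomposition, and the argument closes.
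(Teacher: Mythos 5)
Your proof is correct and is exactly the argument the paper intends: the corollary is stated as following directly from Proposition \ref{111}, and the implicit step is precisely your triangle-inequality decomposition through $(NL_{T_1})^n f$ together with taking the supremum over $f\in V$ with $\Vert f\Vert_{L^1}\leq 1$. Your additional check that $NL_{T_i}$ preserves the zero-average space $V$ is a reasonable piece of diligence that the paper leaves unstated.
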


Corollary \ref{ext1} will be used in the following way: suppose to have
proved the mixing for the operator $NL_{T_{1}}$, i.e. we computed $n,\alpha $
for which \ref{lei} is satisfied, then Corollary \ref{ext1} implies that all
the operators $T_2$ such that 
\begin{equation}  \label{eq_ext}
\Vert T_{1}-T_{2}\Vert _{L^{\infty }}<\frac{1-\alpha }{2n\Vert \rho \Vert
_{BV}}
\end{equation}%
are still mixing.

\subsubsection{Perturbing the noise}

In this subsection we change the noise kernel with a small perturbation in $%
L^{1}.$ We see that the iterates of the new system are still near to the
ones of the original system, and thus we can estimate the rate of mixing.

\noindent \textbf{Notation} Let $\rho _{1,}$ $\rho _{2}\in BV$ be two noise
kernels let us denote by $N_{1},N_{2}$ the associated convolution operators
as defined in $($\ref{NNN}$)$, {then it holds}

\begin{proposition}
\label{222}For each $n\in \mathbb{N}$%
\begin{equation*}
||(N_{1}L)^{n}f-(N_{2}L)^{n}f||_{L^{1}}\leq n||\rho _{1}-\rho
_{2}||_{L^{1}}||f||_{L^{1}}.
\end{equation*}
\end{proposition}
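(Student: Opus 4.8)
The plan is to follow the same strategy as in the proof of Proposition \ref{111}, replacing the perturbation of the map by a perturbation of the noise kernel. The two ingredients are a one-step perturbation estimate and the telescoping identity
\begin{equation*}
(N_{1}L)^{n}-(N_{2}L)^{n}=\sum_{k=1}^{n}(N_{1}L)^{n-k}(N_{1}L-N_{2}L)(N_{2}L)^{k-1},
\end{equation*}
valid for any two operators. Applied to $f$ and combined with the triangle inequality, this reduces the claim to controlling each summand, which I would do by bounding separately the leftmost block, the difference in the middle, and the rightmost block.

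First I would establish the one-step estimate. Since $N_{i}f=\rho _{i}\ast f$ and the convolution is linear in the kernel,
\begin{equation*}
(N_{1}L-N_{2}L)g=(\rho _{1}-\rho _{2})\ast (Lg).
\end{equation*}
Because $\rho _{1},\rho _{2}\in BV\subset L^{1}$, their difference is absolutely continuous, so the total variation norm of $\rho _{1}-\rho _{2}$ as a signed measure coincides with $\Vert \rho _{1}-\rho _{2}\Vert _{L^{1}}$. Applying Lemma \ref{lemma1} with $Lg\in L^{1}$ and the signed measure $\rho _{1}-\rho _{2}$ gives
\begin{equation*}
\Vert (N_{1}L-N_{2}L)g\Vert _{L^{1}}\leq \Vert \rho _{1}-\rho _{2}\Vert _{L^{1}}\cdot \Vert Lg\Vert _{L^{1}}\leq \Vert \rho _{1}-\rho _{2}\Vert _{L^{1}}\cdot \Vert g\Vert _{L^{1}},
\end{equation*}
where the last inequality uses that the deterministic transfer operator $L$ is a weak contraction on $L^{1}$, i.e. $\Vert L\Vert _{L^{1}\rightarrow L^{1}}\leq 1$.

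It then remains to control the outer blocks in each summand. Both $N_{1}L$ and $N_{2}L$ are transfer operators of systems with additive noise, hence Markov operators: each $N_{i}$ is convolution with a probability density, so $\Vert N_{i}\Vert _{L^{1}\rightarrow L^{1}}\leq 1$, and likewise $\Vert L\Vert _{L^{1}\rightarrow L^{1}}\leq 1$, whence $\Vert N_{i}L\Vert _{L^{1}\rightarrow L^{1}}\leq 1$. Thus for each $k$ the factor $(N_{2}L)^{k-1}$ does not increase the $L^{1}$ norm of $f$, the middle factor contributes at most $\Vert \rho _{1}-\rho _{2}\Vert _{L^{1}}$ by the one-step estimate, and the outer factor $(N_{1}L)^{n-k}$ is again a weak contraction; hence every summand is bounded in $L^{1}$ by $\Vert \rho _{1}-\rho _{2}\Vert _{L^{1}}\cdot \Vert f\Vert _{L^{1}}$. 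Summing the $n$ terms yields the claimed bound. I do not expect a genuine obstacle here: the argument is parallel to Proposition \ref{111}, and the only point requiring minor care is recognizing that the norm of $\rho _{1}-\rho _{2}$ which naturally appears through Lemma \ref{lemma1} is its $L^{1}$ norm, thanks to the absolute continuity of both kernels.
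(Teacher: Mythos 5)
Your proposal is correct and follows essentially the same route as the paper: the telescoping identity $(N_{1}L)^{n}-(N_{2}L)^{n}=\sum_{k=1}^{n}(N_{1}L)^{n-k}(N_{1}L-N_{2}L)(N_{2}L)^{k-1}$, the bound $\Vert N_{1}L-N_{2}L\Vert _{L^{1}\rightarrow L^{1}}\leq \Vert \rho _{1}-\rho _{2}\Vert _{L^{1}}$ via the convolution estimate, and the fact that the remaining blocks are $L^{1}$-contractions. You merely spell out the details that the paper compresses into ``the proof is immediate,'' in particular the identification of the total variation of $\rho _{1}-\rho _{2}$ with its $L^{1}$ norm when invoking Lemma~\ref{lemma1}.
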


\begin{proof}
The proof is immediate%
\begin{eqnarray*}
(N_{1}L)^{n}-(N_{2}L)^{n}
&=&\sum_{k=1}^{n}(N_{1}L)^{n-k}(N_{1}L-N_{2}L)(N_{2}L)^{k-1} \\
||(N_{1}L)^{n}-(N_{2}L)^{n}||_{L^{1}} &\leq
&n||N_{1}-N_{2}||_{L^{1}\rightarrow L^{1}}\leq n||\rho _{1}-\rho
_{2}||_{L^{1}}.
\end{eqnarray*}
\end{proof}

Next corollary directly follow from Proposition \ref{222} and show to
estimate the rate of mixing of a perturbation of the operator.

\begin{corollary}
If%
\begin{equation*}
||(N_{1}L)^{n}f||_{V\rightarrow L^{1}}\leq \alpha <1
\end{equation*}%
then 
\begin{equation*}
||(N_{2}L)^{n}f||_{V\rightarrow L^{1}}\leq \alpha +n||\rho _{1}-\rho
_{2}||_{L^{1}}.
\end{equation*}
\end{corollary}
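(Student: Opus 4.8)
The plan is to follow exactly the argument that deduces Corollary~\ref{ext1} from Proposition~\ref{111}, replacing the map-perturbation estimate by its noise-perturbation counterpart, namely Proposition~\ref{222}. Recall that the quantity $\Vert \cdot \Vert_{V\rightarrow L^{1}}$ is the operator norm computed on the unit ball of the zero-average space $V$, i.e. $\Vert S\Vert_{V\rightarrow L^{1}}=\sup_{f\in V,\,\Vert f\Vert_{L^{1}}\leq 1}\Vert Sf\Vert_{L^{1}}$. Thus it suffices to produce, for an arbitrary $f\in V$ with $\Vert f\Vert_{L^{1}}\leq 1$, a bound on $\Vert (N_{2}L)^{n}f\Vert_{L^{1}}$ and then pass to the supremum.

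First I would fix such an $f$ and insert the unperturbed iterate by a triangle inequality:
\begin{equation*}
\Vert (N_{2}L)^{n}f\Vert_{L^{1}}\leq \Vert (N_{1}L)^{n}f\Vert_{L^{1}}+\Vert [(N_{2}L)^{n}-(N_{1}L)^{n}]f\Vert_{L^{1}}.
\end{equation*}
The first term on the right is bounded by $\alpha$ directly from the hypothesis, since $f\in V$ and $\Vert f\Vert_{L^{1}}\leq 1$. The second term is precisely what Proposition~\ref{222} controls: it gives $\Vert [(N_{1}L)^{n}-(N_{2}L)^{n}]f\Vert_{L^{1}}\leq n\Vert \rho_{1}-\rho_{2}\Vert_{L^{1}}\Vert f\Vert_{L^{1}}\leq n\Vert \rho_{1}-\rho_{2}\Vert_{L^{1}}$, using $\Vert f\Vert_{L^{1}}\leq 1$ in the last step. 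Combining the two bounds yields $\Vert (N_{2}L)^{n}f\Vert_{L^{1}}\leq \alpha+n\Vert \rho_{1}-\rho_{2}\Vert_{L^{1}}$ for every admissible $f$, and taking the supremum over the unit ball of $V$ gives the claimed estimate on $\Vert (N_{2}L)^{n}\Vert_{V\rightarrow L^{1}}$.

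There is essentially no serious obstacle here: the statement is an immediate corollary of Proposition~\ref{222} together with the hypothesis, exactly as the paper indicates. The only points requiring care are bookkeeping rather than substance — namely, that Proposition~\ref{222} is stated for all $f\in L^{1}$ and so applies in particular to $f\in V$, and that no hypothesis on $V$ being invariant under $N_{i}L$ is needed, because the conclusion concerns only the $L^{1}$-norm of the image and the supremum is taken over inputs in $V$. For this reason I would keep the proof to the two or three lines above, mirroring the brevity of the proof of Corollary~\ref{ext1}.
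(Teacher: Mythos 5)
Your proof is correct and is exactly the argument the paper intends: the corollary is stated as following directly from Proposition~\ref{222}, via the same triangle-inequality decomposition used to deduce Corollary~\ref{ext1} from Proposition~\ref{111}. No gaps.
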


\subsection{{Computer-aided estimates on the mixing rate}.\label{exp1}}

In this subsection we show the results of the computer aided estimates for a
family of systems with strong nonlinearity, $\epsilon =1.4$, with noise of
magnitude $\xi =0.1$ (similarly to the noise range considered in \cite{GCS08}%
). For several values of $\tau $. As explained at beginning of Section \ref%
{comptool} we use the algorithm described in Section 4 of \cite{GMN} to
prove that assumption $LR1$ holds for these systems and find $n$ such that 
\begin{equation}
||L_{\xi }^{n}||_{V\rightarrow L^{1}}\leq \alpha <1.  \label{leilei}
\end{equation}%
Then we use the theory developed in Subsection \ref{allarga} to provide the
mixing rate for nearby systems, obtaining a large interval.

\begin{proposition}
Let $\epsilon =1.4$ and $\xi =0.1$. Then, for each $\tau \in \lbrack
0.75,0.8]$\footnote{%
The zip file Arnold.results.zip at %
\url{https://bitbucket.org/luigimarangio/arnold_map/} contains the results
of more than 300 computer-aided estimates, including the ones listed in
table 2; these numerical data extend the result of proposition 27 for $\tau
\in \lbrack 0.7,0.8]$.}, the corresponding Arnold map with noise and
parameters $(\epsilon ,\tau ,\xi )$ satisfy assumption $LR1$ of Theorem \ref%
{th:linearresponse}.
\end{proposition}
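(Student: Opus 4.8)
The plan is to reduce assumption $LR1$ to the quantitative contraction estimate $\|L_\tau^{n}\|_{V\to L^1}\le\alpha<1$, which the computer-aided method of \cite{GMN} certifies at individual parameter values as in \eqref{leilei}, and then to propagate this bound from a finite grid of values $\tau_i$ to the whole interval $[0.75,0.8]$ using the perturbative stability result of Corollary \ref{ext1}. Throughout, $L_\tau$ denotes the annealed operator $NL_{T_{\tau,\epsilon}}$ with $Nf=\rho_\xi\ast f$. The structural observation that makes the covering feasible is that varying $\tau$ is a \emph{pure translation} of the deterministic part: since $T_{\tau_1,\epsilon}$ and $T_{\tau_2,\epsilon}$ differ only by the additive constant $\tau_1-\tau_2$, one has $\|T_{\tau_1,\epsilon}-T_{\tau_2,\epsilon}\|_{L^\infty}=|\tau_1-\tau_2|$ exactly, so the $L^\infty$ distance controlling the threshold \eqref{eq_ext} is simply the distance between parameters.

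First I would verify that the certified bound $\|L_\tau^{n}\|_{V\to L^1}\le\alpha<1$ really yields $LR1$. If $g\in BV$ has zero average then $g\in V$, and since $L_\tau$ is a Markov operator it preserves total mass and contracts the $L^1$-norm, so $V$ is $L_\tau$-invariant. Writing an arbitrary $m=kn+r$ with $0\le r<n$, the $L^1$-contraction gives
\begin{equation*}
\|L_\tau^{m}g\|_{L^1}\le\|L_\tau^{kn}g\|_{L^1}\le\alpha^{k}\,\|g\|_{L^1},
\end{equation*}
and the right-hand side tends to $0$ as $k\to\infty$. Hence $\lim_{m\to\infty}\|L_\tau^{m}g\|_{L^1}=0$, which is precisely $LR1$ of Theorem \ref{th:linearresponse}.

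Second, I would run the algorithm of \cite{GMN}, Section~4, at a finite grid $\{\tau_i\}\subset[0.75,0.8]$ with $\epsilon=1.4$ and $\xi=0.1$, producing for each $i$ a certified pair $(n_i,\alpha_i)$ with $\|L_{\tau_i}^{n_i}\|_{V\to L^1}\le\alpha_i<1$. The noise kernel is the fixed density $\rho_\xi=\tfrac1\xi\,\id_{[-\xi/2,\xi/2]}$, whose bounded-variation norm is $\|\rho_\xi\|_{BV}=\|\rho_\xi\|_{L^1}+Var(\rho_\xi)=1+\tfrac{2}{\xi}=21$. By Corollary \ref{ext1} together with the threshold \eqref{eq_ext} and the identity $\|T_{\tau_1,\epsilon}-T_{\tau_2,\epsilon}\|_{L^\infty}=|\tau_1-\tau_2|$, each certified point $\tau_i$ carries mixing to the whole interval $(\tau_i-r_i,\tau_i+r_i)$ with explicit radius
\begin{equation*}
r_i=\frac{1-\alpha_i}{2n_i\,\|\rho_\xi\|_{BV}}=\frac{1-\alpha_i}{42\,n_i}.
\end{equation*}
It then remains to confirm, directly from the certified outputs, that $\bigcup_i(\tau_i-r_i,\tau_i+r_i)\supseteq[0.75,0.8]$; choosing the grid spacing below $\min_i r_i$ forces this, and the covering is checked interval-by-interval.

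The main obstacle is the computational step rather than the analytic glue, and it carries a genuine tension. To push $\alpha$ comfortably below $1$ one typically needs a large number of iterates $n$ and a fine finite-element discretization, so that the rigorously propagated truncation and rounding errors do not overwhelm the contraction; but a large $n$ shrinks each radius $r_i=(1-\alpha_i)/(42\,n_i)$, forcing a finer grid and hence many more certified runs. Balancing these two demands, and confirming that the window $[0.75,0.8]$ stays clear of parameter ranges where mixing degenerates or is too slow to certify, is where the real effort lies; the large number of computer-aided estimates involved reflects exactly this cost.
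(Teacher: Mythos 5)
Your proposal follows essentially the same route as the paper: certify the contraction bound $\|L_{\tau_i}^{n_i}\|_{V\to L^1}\le\alpha_i<1$ at a finite grid of parameter values via the algorithm of \cite{GMN}, use Corollary \ref{ext1} and the threshold \eqref{eq_ext} to enlarge each certified point to an explicit interval of mixing parameters, and verify that these intervals cover $[0.75,0.8]$ (the paper's Table \ref{extmix} records exactly such a covering, with union $(0.7493\ldots,0.8007\ldots)$). Your added details --- that the $L^\infty$ distance between the maps is exactly $|\tau_1-\tau_2|$, that $\|\rho_\xi\|_{BV}=1+2/\xi=21$, and the explicit iteration argument showing the certified bound implies LR1 --- are correct and consistent with what the paper leaves implicit.
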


\begin{proof}
Suppose we have proved the mixing for a system with parameters $(\epsilon
_{0},\tau _{0},\xi _{0})$ and we have $\alpha $ and $n$ such that $($\ref%
{leilei}$)$ is satisfied, then $($\ref{eq_ext}$)$ implies that there exists $%
\theta _{0}>0$ and a whole interval $I_{0}=[\tau _{0}-\theta _{0},\tau
_{0}+\theta _{0}]$, such that all the systems with parameters $(\epsilon
_{0},\tau ,\xi _{0})$, $\tau \in I_{0}$, are still mixing. Moreover $($\ref%
{eq_ext}$)$ gives an explicit formula for $\theta _{0}$ which depends from $%
\alpha $ and $n$. These constants are explicitly computable with the
algorithm shown in \cite{GMN} which we are using in our code, hence we can
explicitly compute $\theta _{0}$.

To show that the mixing property holds for every system of parameters $%
(1.4,\tau ,0.1)$, with $\tau \in \lbrack 0.75,0.8]$, a strategy is to
consider a finite sequence of points $\{\tau _{i}\}\subset \lbrack 0.75,0.8]$
such that the systems with parameters $(1.4,\tau _{i},0.1)$ are mixing, for
every $i$, and the associated intervals $I_{i}:=[\tau _{i}-\theta _{i},\tau
_{i}+\theta _{i}]$, defined as above, cover the interval $[0.75,0.8]$.

In Table \ref{extmix} we show the computer aided estimates about the values
of $\theta _{0}$ by the method described above for each example. As it can
be seen, since the union of all this computed intervals is equal to $(a,b)$,
with $a=0.749399418088000$ and $b=0.800715949198087$, we have then proved
the desired property in the whole interval $[0.75,0.8]$.
\end{proof}

\begin{table}[tbp]
\caption{ Given the Arnold map with noise of magnitude $\protect\xi $ and
parameters $(\protect\tau _{0},\protect\epsilon _{0})$, for which we have
already proved mixing, the table shows the computed intervals $I_{0}=[%
\protect\tau _{0}-\protect\theta _{0},\protect\tau _{0}+\protect\theta _{0}]$%
, such that if $\protect\tau \in I_{0}$ then the Arnold map with parameters $%
(\protect\tau ,\protect\epsilon _{0})$ is mixing}
\label{extmix}\centering
{\ 
\begin{tabular}{ccc}
\toprule ${(\tau_0,\epsilon_0)}$ & ${\xi}$ & $[\tau_0-\theta_0,\tau_0+%
\theta_0]$ \\ 
\midrule $(0.7502,1.4)$ & $0.1$ & $[0.749399418088000, 0.751000581912001]$
\\ 
$(0.7516,1.4)$ & $0.1$ & $[0.750823157412727, 0.752376842587275]$ \\ 
$(0.7532,1.4)$ & $0.1$ & $[0.752340409081139, 0.754059590918862]$ \\ 
$(0.7548,1.4)$ & $0.1$ & $[0.753967757739070, 0.755632242260931]$ \\ 
$(0.7564,1.4)$ & $0.1$ & $[0.755472331622558, 0.757327668377443]$ \\ 
$(0.7582,1.4)$ & $0.1$ & $[0.757304135181951, 0.759095864818050]$ \\ 
$(0.7598,1.4)$ & $0.1$ & $[0.758934791149505, 0.760665208850496]$ \\ 
$(0.7616,1.4)$ & $0.1$ & $[0.760590252726379, 0.762609747273622]$ \\ 
$(0.7632,1.4)$ & $0.1$ & $[0.762227848809647, 0.764172151190354]$ \\ 
$(0.7648,1.4)$ & $0.1$ & $[0.763863841059881, 0.765736158940120]$ \\ 
$(0.7668,1.4)$ & $0.1$ & $[0.765692173699252, 0.767907826300749]$ \\ 
$(0.7688,1.4)$ & $0.1$ & $[0.767737148314721, 0.769862851685280]$ \\ 
$(0.7708,1.4)$ & $0.1$ & $[0.769780384398816, 0.771819615601185]$ \\ 
$(0.7728,1.4)$ & $0.1$ & $[0.771572710558602, 0.774027289441399]$ \\ 
$(0.7748,1.4)$ & $0.1$ & $[0.773628600754358, 0.775971399245643]$ \\ 
$(0.7768,1.4)$ & $0.1$ & $[0.775681672076595, 0.777918327923406]$ \\ 
$(0.7788,1.4)$ & $0.1$ & $[0.777626636524504, 0.779973363475497]$ \\ 
$(0.7812,1.4)$ & $0.1$ & $[0.779896782767258, 0.782503217232743]$ \\ 
$(0.7836,1.4)$ & $0.1$ & $[0.782295331693974, 0.784904668306028]$ \\ 
$(0.7860,1.4)$ & $0.1$ & $[0.784762771410257, 0.787237228589744]$ \\ 
$(0.7884,1.4)$ & $0.1$ & $[0.787166089347180, 0.789633910652821]$ \\ 
$(0.7908,1.4)$ & $0.1$ & $[0.789629611581057, 0.791970388418944]$ \\ 
$(0.7928,1.4)$ & $0.1$ & $[0.791684956386327, 0.793915043613674]$ \\ 
$(0.7948,1.4)$ & $0.1$ & $[0.793685568440505, 0.795914431559496]$ \\ 
$(0.7968,1.4)$ & $0.1$ & $[0.795685318203188, 0.797914681796813]$ \\ 
$(0.7988,1.4)$ & $0.1$ & $[0.797684476459441, 0.799915523540560]$ \\ 
$(0.7996,1.4)$ & $0.1$ & $[0.798484050801914, 0.800715949198087]$ \\ 
\bottomrule &  & 
\end{tabular}%
}
\end{table}

Once we have Assumption LR1 of Theorem \ref{th:linearresponse} satisfied for
this family of systems, applying Corollary \ref{corfin} we directly get

\begin{corollary}
Let $\epsilon =1.4$, $\xi =0.1$ then, for each $\tau \in \lbrack 0.75,0.8]$
the rotation number corresponding to the {\ Arnold map} with noise and
parameters $(\epsilon ,\tau ,\xi )$ is differentiable as $\tau $ varies and $%
($\ref{formfin}$)$ \ holds.
\end{corollary}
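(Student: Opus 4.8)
The plan is to recognize that this corollary is an immediate consequence of two results already in hand: the preceding Proposition, which verifies that Assumption LR1 of Theorem \ref{th:linearresponse} holds throughout the parameter range under consideration, and Corollary \ref{corfin}, which converts that mixing property into differentiability of the rotation number together with the explicit formula \eqref{formfin}. There is essentially no new analytic content to produce; the work is to check that the hypotheses line up and to invoke the earlier statements correctly.

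First I would recall that Corollary \ref{corfin} asserts that $\rho_\tau$ is differentiable at every parameter value $\tau_0$ for which the associated annealed transfer operator $L_{\tau_0}$ is mixing in the sense of Assumption LR1, and that at such a point its derivative is given by \eqref{formfin}. Thus the sole hypothesis required to apply that corollary at a given $\tau$ is the mixing condition LR1. Next I would feed in the preceding Proposition: for the fixed choice $\epsilon=1.4$, $\xi=0.1$, it establishes that the system with parameters $(\epsilon,\tau,\xi)$ satisfies LR1 for every $\tau\in[0.75,0.8]$. This is obtained by certifying \eqref{leilei} at a finite grid of base points $\{\tau_i\}$ via the computer-aided estimates, and then propagating mixing to neighborhoods using the stability bound of Corollary \ref{ext1}, which guarantees that mixing persists on each interval $I_i=[\tau_i-\theta_i,\tau_i+\theta_i]$; the grid is chosen so that $\bigcup_i I_i\supseteq[0.75,0.8]$. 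Hence LR1 holds at each $\tau$ in the interval.

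Finally, applying Corollary \ref{corfin} pointwise at each such $\tau$ yields the differentiability of $\tau\mapsto\rho_\tau$ on $[0.75,0.8]$ and the validity of \eqref{formfin} there, which is exactly the assertion.

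I do not expect a genuine obstacle: all of the heavy lifting (the regularization estimates of Section \ref{Reg}, the abstract linear-response theorem, the identification of the derivative operator in Lemma \ref{012}, and the representation of the rotation number as the integral of a Lipschitz observable in Proposition \ref{integral}) has already been carried out, and the covering argument for LR1 has already been completed in the preceding Proposition. The statement is thus a packaging of these ingredients. The only points deserving a moment's care are that differentiability is asserted \emph{pointwise} in $\tau$ — Corollary \ref{corfin} is applied separately at each $\tau\in[0.75,0.8]$, with no claim of uniformity — and that the continuity of the resolvent $(Id-L_\tau)^{-1}$ appearing in \eqref{formfin} is itself a consequence of LR1 through Proposition \ref{proplinresp}, so that the right-hand side of \eqref{formfin} is well defined at every such $\tau$.
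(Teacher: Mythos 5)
Your proposal is correct and matches the paper's own argument exactly: the paper likewise obtains this corollary by noting that the preceding proposition verifies Assumption LR1 for all $\tau\in[0.75,0.8]$ with $\epsilon=1.4$, $\xi=0.1$, and then applying Corollary~\ref{corfin} directly. No further comment is needed.
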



\section{{Non-monotonic rotation number {for} strong nonlinearity} \label%
{exp2}}

For $\epsilon <1$, in the case which the Arnold map is a diffeomorphism it
is well known that if $\tau _{2}>\tau _{1}$ then $\rho _{\tau _{2}}>\rho
_{\tau _{1}}$ (see \cite{wiggins2003}). In this section we show that the
rotation number is not necessarily monotonic anymore when $\epsilon >1$. We
prove this for a particular example with $\epsilon =1.4$ and $\xi =0.01.$
However non rigorous numerical experiments suggests that the phenomenon is
quite common when the noise is small (see Section \ref{last}). \ The proof
is done by rigorously approximating the value of the rotation number for
several values of $\tau $. This is done by the rigorous approximation of the
stationary measure with a small error in the $L^{1}$ norm using again the
algorithm described in \cite{GMN}. The algorithm approximates the transfer
operator by a finite rank one and approximates the stationary measure of the
initial transfer operator by the one of the approximated one. The algorithm
then validates this approximation, providing an explicit estimate on the
distance betweeen the two stationary measures in the $L^{1}$ norm by a
quantitative stability statement for the fixed points, not much different
from the one shown at Theorem \ref{th:linearresponse} (see \cite{GMN},
Section 3 for more details about how the algorithm works). This is
sufficient to get a certified estimate on the rotation number, as we have
seen, the rotation number is computable as the average of a Lipschitz
observable with respect to the stationary measure. More precisely,\ we will
find two values $\tau _{1}<\tau _{2}$ with corresponding rotation numbers $%
\rho _{1}\in I_{1}$, $\rho _{2}\in I_{2}$, where $I_{1}$ and $I_{2}$ are the
rigorous computed intervals in which the rotation numbers lie, furthermore,
the intervals are such that $max(I_{2})<min(I_{1})$. By this $\rho _{2}$
must be smaller then $\rho _{1}.$ Next proposition show an example of an
interval where we find a non monotonic behavior of the rotation number.

\begin{proposition}
\label{nonmonopr} Let $\epsilon = 1.4$ and $\xi = 0.01$, then the rotation
number $\rho_\tau$, as function of the parameter $\tau$, is not monotonic in
the interval $[0.707,0.716]$.
\end{proposition}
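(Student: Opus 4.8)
The plan is to reduce the non-monotonicity of $\rho_\tau$ to a pair of certified numerical inequalities on its values. By Proposition~\ref{integral}, at any $\tau$ for which the system is mixing one has the representation $\rho_\tau = \int_{S^1}\varphi_\tau\,d\mu_\tau$, where $\mu_\tau$ is the (unique) stationary measure and $\varphi_\tau(x)=\tau-\frac{\epsilon}{2\pi}\sin(2\pi x)$ is bounded, with $\|\varphi_\tau\|_\infty\le |\tau|+\frac{\epsilon}{2\pi}$. It therefore suffices to exhibit two parameters $\tau_1<\tau_2$ in $[0.707,0.716]$ together with certified intervals $I_1\ni\rho_{\tau_1}$ and $I_2\ni\rho_{\tau_2}$ satisfying $\max(I_2)<\min(I_1)$: this forces $\rho_{\tau_2}\le\max(I_2)<\min(I_1)\le\rho_{\tau_1}$, so $\rho_{\tau_2}<\rho_{\tau_1}$ although $\tau_2>\tau_1$, which contradicts monotonicity.

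For each chosen $\tau_i$ I would run the finite-element scheme of \cite{GMN}. It replaces the transfer operator $L_{\tau_i}$ by a finite-rank operator, computes the stationary density $\tilde\mu_{\tau_i}$ of the discretized system, and---via the quantitative fixed-point stability statement of \cite[Section 3]{GMN}, which itself rests on a certified contraction bound $\|L_{\tau_i}^n\|_{V\to L^1}\le\alpha<1$ (this also verifies mixing, so that Proposition~\ref{integral} indeed applies)---furnishes an explicit bound $\|\mu_{\tau_i}-\tilde\mu_{\tau_i}\|_{L^1}\le\eta_i$. Since both $\tilde\mu_{\tau_i}$ and $\varphi_{\tau_i}$ are explicit, the integral $\int_{S^1}\varphi_{\tau_i}\,d\tilde\mu_{\tau_i}$ can be enclosed in interval arithmetic by some $J_i$, and the elementary transport estimate
\[
\left|\int_{S^1}\varphi_{\tau_i}\,d\mu_{\tau_i}-\int_{S^1}\varphi_{\tau_i}\,d\tilde\mu_{\tau_i}\right|\le\|\varphi_{\tau_i}\|_\infty\,\|\mu_{\tau_i}-\tilde\mu_{\tau_i}\|_{L^1}\le\left(|\tau_i|+\tfrac{\epsilon}{2\pi}\right)\eta_i
\]
turns $J_i$, inflated by the right-hand side, into a certified interval $I_i$ containing $\rho_{\tau_i}$.

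The only delicate point is to choose $\tau_1,\tau_2$ and a discretization fine enough that $I_1$ and $I_2$ actually separate. Guided by non-rigorous numerics, one places $\tau_1$ near a local maximum of $\rho$ on $[0.707,0.716]$ and $\tau_2$ just past the ensuing minimum, so as to make the true gap $\rho_{\tau_1}-\rho_{\tau_2}$ as large as possible. The main obstacle is that this gap is small, so it must be certified to exceed the combined widths of the enclosures: the interval-arithmetic widths of $J_1,J_2$ together with the $L^1$-transport errors $(|\tau_i|+\epsilon/(2\pi))\eta_i$. This forces one to take the finite-element partition sufficiently fine---and the interval bookkeeping sufficiently tight, all the more so since the small noise $\xi=0.01$ yields only weak regularization---so that each $\eta_i$ and each width of $J_i$ is driven well below half the observed dip. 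Once a concrete pair $(\tau_1,\tau_2)$ with $\max(I_2)<\min(I_1)$ has been certified by the computer, the non-monotonicity of $\rho_\tau$ on $[0.707,0.716]$ follows at once.
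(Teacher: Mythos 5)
Your overall strategy is the one the paper uses: represent $\rho_\tau$ as $\int_{S^1}\varphi_\tau\,d\mu_\tau$ via Proposition~\ref{integral}, obtain a certified $L^1$ enclosure of the stationary density with the algorithm of \cite{GMN}, and convert it into a certified interval for $\rho_\tau$ through the transport bound $\left|\int_{S^1}\varphi_\tau\,d\mu_\tau-\int_{S^1}\varphi_\tau\,d\tilde\mu_\tau\right|\le\|\varphi_\tau\|_\infty\,\|\mu_\tau-\tilde\mu_\tau\|_{L^1}$. Your observation that the certified contraction $\|L^n\|_{V\to L^1}\le\alpha<1$ simultaneously delivers the mixing hypothesis needed for Proposition~\ref{integral} to apply is a point the paper leaves implicit, and is welcome.

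There is, however, a genuine logical gap in your reduction. Exhibiting a single pair $\tau_1<\tau_2$ with certified $\rho_{\tau_2}<\rho_{\tau_1}$ only proves that $\rho_\tau$ is not non-decreasing; it is entirely consistent with $\rho_\tau$ being monotonically \emph{decreasing} on all of $[0.707,0.716]$, in which case the function would still be monotonic and the stated conclusion would not follow. To establish non-monotonicity you must additionally certify an \emph{increase} somewhere in the interval. This is precisely what the paper's proof supplies: it computes disjoint certified enclosures at all ten grid points $\tau\in\{0.707,0.708,\dots,0.716\}$ (Table~\ref{nonmonotab}), which decrease from $\tau=0.707$ through $\tau=0.715$ and then increase from $\tau=0.715$ to $\tau=0.716$; the decrease and the final increase together rule out both monotone behaviors. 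The repair is cheap --- add a second certified pair witnessing an increase --- but as written your certificate does not prove the proposition.
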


\begin{proof}
As explained above, we use the algorithm of \cite{GMN} to estimate of the
stationary measure for $\epsilon =1.4$ and for each $\tau \in \left\{
0.707,0.708,....,0.716\right\} $. We estimate the expected value of the
observable $\tilde{T}_{\tau ,\epsilon }(x)-x$ \ with respect to the
stationary measure for each example. This gives a certified interval in
which the rotation number $\rho _{\tau }$ of each example lies (see
Proposition \ref{integral}). The results are reported in Table \ref%
{nonmonotab}. The inspection of these, disjoint, decreasing, intervals shows
that the rotation number decreases for $\tau \in \left\{
0.707,0.708,....0.715\right\} $. The last estimate at $(0.716,1.4)$ shows an
increasing behavior, showing non monotonicity.
\end{proof}

Of course in the estimates shown in Table \ref{nonmonotab} \ and in the
previous proof, the "decreasing part" is more interesting, as this is
somewhat unexpected for an increasing forcing.

\begin{table}[tbp]
\caption{This table shows the computed intervals in which the rotation
number lies for each value of the parameters in consideration.}
\label{nonmonotab}\centering
{\ 
\begin{tabular}{ccc}
\toprule ${(\tau,\epsilon)}$ & ${\xi}$ & ${\rho_{\tau}}$ \\ 
\midrule $(0.707,1.4)$ & $0.01$ & $[0.780594, 0.780604]$ \\ 
$(0.708,1.4)$ & $0.01$ & $[0.778348, 0.778361]$ \\ 
$(0.709,1.4)$ & $0.01$ & $[0.775291, 0.775302]$ \\ 
$(0.710,1.4)$ & $0.01$ & $[0.771833, 0.771844]$ \\ 
$(0.711,1.4)$ & $0.01$ & $[0.768335, 0.768348]$ \\ 
$(0.712,1.4)$ & $0.01$ & $[0.765170, 0.765183]$ \\ 
$(0.713,1.4)$ & $0.01$ & $[0.762568, 0.762590]$ \\ 
$(0.714,1.4)$ & $0.01$ & $[0.760585, 0.760612]$ \\ 
$(0.715,1.4)$ & $0.01$ & $[0.759288, 0.759344]$ \\ 
$(0.716,1.4)$ & $0.01$ & $[0.759915, 0.759970]$ \\ 
\bottomrule &  & 
\end{tabular}%
}
\end{table}

\section{Comparison of the results with further numerical estimates\label%
{sec:Ulam}}

In this section we compare the approximation for the invariant measures
obtained with our certified method with two Monte Carlo methods, one based
on a Ulam method in which the estimates needed to set up a Markov
approximation of the system are made in a Monte Carlo way, and the other is
a pure Monte Carlo method, where we iterate long orbits. The Monte Carlo
numerical methods used in this section are then non-rigorous and they do not
provide certified bounds on the accuracy of the estimates, however they
confirm the previous findings and complements them with further details. We
now provide a short description of both Monte Carlo methods used in this
section.

\subsection{Ulam's Monte Carlo method}

As we have seen before, the invariant measure of the Arnold map with noise
is a fixed point of the transfer operator $L$ associated to the system.
Ulam's method can be employed to get a finite-dimensional approximation of
the system by a Markov chain and then use this to get information on the
system's invariant measure.

Let us recall briefly the Ulam method applied to maps of the interval. Let $%
S:\quad \lbrack 0,1]\longrightarrow \lbrack 0,1]$ be a map and $L$ the
corresponding transfer operator. Let us divide the interval $[0,1]$ in $N$
subintervals $I_{i}=\left[ x_{i-1}-x_{i}\right] $ and $\chi _{I_{i}}$ be the
associated characteristic function. Let $L_{N}$ be the finite dimensional
representation of the transfer operator corresponding to $S$ for the
assigned partition. The entries of $L_{N}$ are defined by $L_{N,i,j}=m\left(
S^{-1}(I_{i})\cap I_{j}\right) /m(I_{i})$, where $m$ is the Lebesque measure
($m(I_{i})=1/N$). The matrix $L_{N}$ is a stochastic matrix with nonnegative
entries and therefore possesses a non-negative left eigenvector with
eigenvalue equal to 1 representing a finite dimensional approximation of the
invariant measure \cite{ulam1960,LM,ding2001}.

Our computer aided estimates relies on a certified version of the Ulam
method described in \cite{GMN} where the probabilities $L_{N,i,j}$ are
estimated by computing with a rigorous bound the images of small intervals
by $S$. A faster method to get approximation of the system can be
implemented using a Monte Carlo approach. This method cannot lead to
certified estimates of course but it is interesting to compare the results
of this method and the rigorous one. To estimate $L_{N,i,j}$, following \cite%
{bose2001}, \cite{murray2004} we randomly select $w_{k,i},\ k=1,2,..M$ , $M $
points from the interval $I_{i}$ and $r_{i,j}$ be the number of points of
the image of the set $\left\{ w_{k,i}|w_{k,i}\in I_{i},\ k=1,2,..M\right\} $
under $S$ falling in the interval $I_{j}$. Then, $L_{N,i,j}$ is estimated as

\begin{equation}
L_{N,i,j}=\dfrac{r_{i,j}}{M}\approx \dfrac{m\left( S^{-1}(I_{i})\cap
I_{j}\right) }{m(I_{i})}.
\end{equation}

The same procedure can be done if the trajectories of the randomly selected
points are generated by a random dynamical system instead of $S$. This is
the method implemented in our Ulam Monte Carlo approximation.

One way to approximate non rigorously the invariant density is to iterate a
uniform starting density with the operator $L_{N}.$ More precisely we
implement the following steps \textit{i}) Select a nonnegative vector $v\in
R^{n}$, for instance $v=(1,1,...,1)^{T}$; \textit{ii}) define $%
v_{j+1}^{T}=v_{j}^{T}P(N)$ and a norm $\Vert v_{j+1}-v_{j}\Vert $ (for
instance the canonical Euclidean norm in $R^{n}$); \textit{iii}) iterate
step \textit{ii}) by using as new initial vector $v_{j+1}$; \textit{iv})
repeat steps \textit{ii}) and \textit{iii}) up to find $\Vert
v_{j+1}-v_{j}\Vert <\delta $ (where $\delta $ is the desired tolerance).
Then, after normalizing the final $v$ ($\Vert v\Vert =1$), the corresponding
finite dimensional approximation of the invariant measure associated to the
map $S$ is given by $\varphi ^{\ast }=\sum_{j=1}^{N}v_{i}\chi _{I_{i}}$.

\subsection{Invariant measure from the simulation of long orbits}

Let us come to the description of the pure Monte Carlo approach we use to
determine the invariant measure. This method is based on the simulations of
very long orbits, that are employed to estimate the corresponding invariant
measure. The main steps of the implemented algorithm are the followings. 
\textit{i}) An initial condition $X_{0}(j)\in \lbrack 0,1],\
j=1,2,3,..N_{IC} $ is chosen and then, for a given noise realization, an
orbit is generated by $N_{IT}$ iterations of the map. The initial conditions
are chosen uniformly distributed in $[0,1]$. \textit{ii}) The interval $%
[0,1] $ is divided in $N_{bins}$ and a very efficient algorithm is employed
to determine, for each iteration, to which bin the corresponding value along
the orbit belongs. In this way a cumulative histogram containing $%
N_{T}=N_{IC}N_{IT}$ data is generated. \textit{iii}) Then, the histogram is
normalized to have a unit total area and the corresponding step-wise
function will be an approximation of the invariant measure.

\subsection{Results}

In this section we compare the results obtained with these three methods. We
show the results for $\tau =0.709$, $\epsilon =1.4$ and different noise
amplitude. For the Ulam's method the parameter were chosen as follow. The
interval $I=[0,1]$ was portioned in 10000 subintervals $I_{i}$. Then, for
each $I_{i}$ a random set of $M=600000$ points, $\left\{ w_{k,i}|w_{k,i}\in
I_{i},\ k=1,2,..M\right\} $, was generated and used to estimate $L_{N,i,j}$.
Instead, for the pure Monte Carlo approach, the chosen values of the
parameters were $N_{IC}=5\times 10^{4}$, $N_{IT}=5\times 10^{6}$ and $%
N_{bins}=10000$. In figure \ref{fig:Ulam} are plotted the results for two
different values of the noise. The data in the top panels corresponds to a
noise amplitude $\xi =0.01$, while that in the bottom ones to $\xi =0.1$.
The results reported in the left top panel show that there is a very good
agreement between the measures computed with the three approach. However, a
finer inspection of these data, indicates that this is not the case as shown
in the zooming of the curves reported in the right top panel. In particular,
the invariant measure computed with the certified method is very smooth,
while that obtained with the two Monte Carlo approaches exhibit
fluctuations. In particular the fluctuations are less pronounced for the
measure computed with the simulations of long orbits.

Note that a smoothing of the results could be obtained by averaging over
different noise realizations (the results presented here were obtained using
a single noise realization). In the bottom panels are shown the results for
the noise amplitude $\xi=0.1$ and they are qualitative similar to the case
with smaller noise amplitude.

\begin{figure}[h]
\caption{Comparison of the invariant measures computed using Ulam's Monte
Carlo algorithm (blue line) with the simulations of long orbits (red line)
and the certified Ulam method (black line). Results plotted for $\protect%
\epsilon=1.4$ and $\protect\tau=0.709$. Top panels -- noise amplitude $%
\protect\xi=0.01$; and bottom panels -- $\protect\xi=0.1$. Note that in the
left panels the three curves appear to coincide. The right panels show a
zoom, in which smaller-scale differences between the three curves are
visible.}
\label{fig:Ulam}\centering
\includegraphics[scale=0.5]{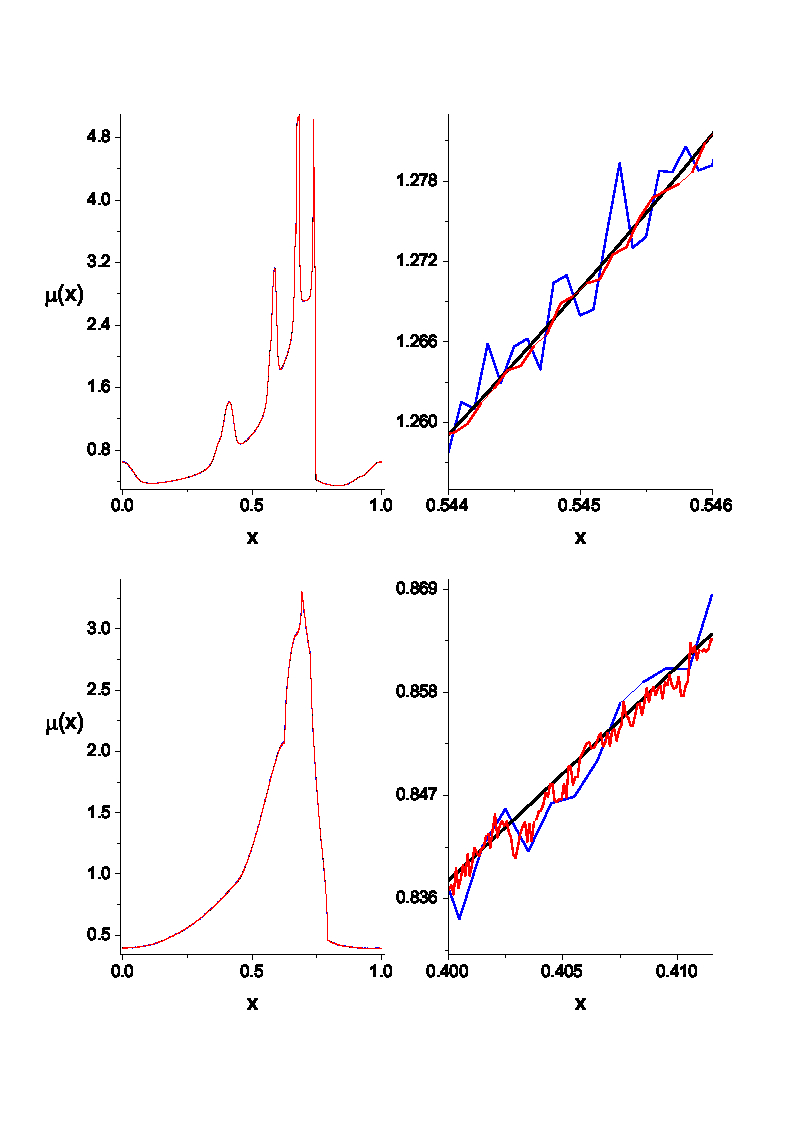}
\end{figure}

\subsection{Noise dependence of the rotation number's monotonicity}

\label{last}

In this section, we show numerical results that --- while not rigorous ---
suggest that, as remarked in Section~\ref{exp2}, the non-monotonicity of the
rotation number is a phenomenon typical of small noise amplitudes that
disappears as the noise amplitude is large enough. We consider the behavior
of the rotation number in the interval of $\tau $ values $[0.707,0.716]$. We
considered two amplitude of the noise: $\xi _{1}=0.01$ for which we known
that the rotation number is non monotonic and $\xi _{2}=0.05$. The numerical
simulations were carried out by using for each pair $(\tau ,\epsilon)$, with 
$\epsilon =1.4$ and $\tau \in \left\{ 0.707,0.708,....,0.716\right\} $,
60000 independent noise realizations. Moreover, for each realization, the
corresponding rotation number was estimated after $10^{6}$ iterates of the
Arnold map. The corresponding results are reported in figure~\ref{fig:noise}
and show that the rotation number becomes a monotonic function of $\tau $
when the noise amplitude is $\xi _{2}=0.05$.

We have seen that the presence of the noise promotes several effects on the
dynamics of the corresponding noisy Arnold map. In particular, the results
of the simulations discussed in Sections~\ref{sec:Ulam} and \ref{last} leads
to the following remarks:\newline
a) The presence of noise determines an increase of the support of the
corresponding invariant measure and to its smoothing.\newline
b) The comparison of the measures computed with the three approaches 
shows good agreement among them; see left panels of figure~\ref{fig:Ulam}.
However, the inspection of these data on a finer scale, shows that the
measures obtained with the Monte Carlo approaches exhibit fluctuations with
respect to that computed with the certified method.\newline
c) The noise amplitude strongly impacts the monotonicity properties of the
rotation number: i.e., for sufficiently high noise amplitude, a transition
from non-monotonic to monotonic behavior is observed.

To complete the above discussion, in figure \ref{fig:nonoise} are reported
the values of the rotation number against $\tau $ ($\epsilon =1.4$) in the
case of zero noise and for a larger interval of $\tau $ values. These data
were obtained using a Monte Carlo approach in which, for each $\tau $ value,
the mean and standard deviation of $\rho _{\tau }$ were obtained by
averaging over $10^{3}$ initial conditions (for each initial condition the
corresponding rotation number was estimated after $10^{6}$ iteration of the
noiseless map). The data clearly show the non monotonic behavior of the
rotation number in the chosen interval of $\tau $ values. The data on the
right of the figure also show that non monotonic behavior also occurs in
other intervals of $\tau $ values (of smaller amplitude). For each $\tau $
the distribution of the corresponding $\rho _{\tau }$ values determined with
this approach follow approximately a Gaussian distribution (data not shown).
Then, using the standard t-test, it was found that the mean probability that
the rotation number is monotonic in the intervals of interest is much lower
than $10^{-3}$.

\begin{figure}[h]
\centering
\includegraphics[scale=0.4]{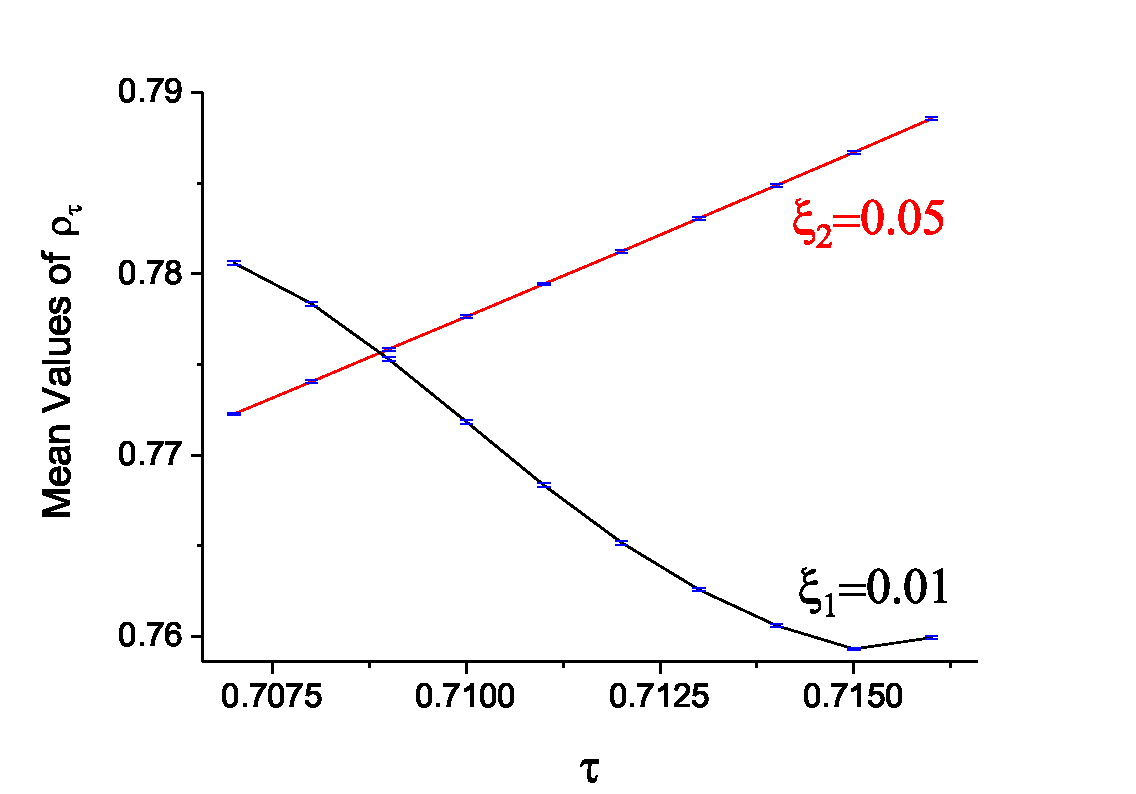}
\caption{Effect of the noise amplitude on the monotonicity of the rotation
number $\protect\rho = \protect\rho_{\protect\tau}$, shown for $\protect%
\epsilon=1.4$. The black line represents the numerical results for $\protect%
\xi=0.01$, while the red line is for $\protect\xi=0.05$. The errors bars
represent the corresponding standard deviations.}
\label{fig:noise}
\end{figure}

\begin{figure}[h]
\centering
\includegraphics[scale=0.4]{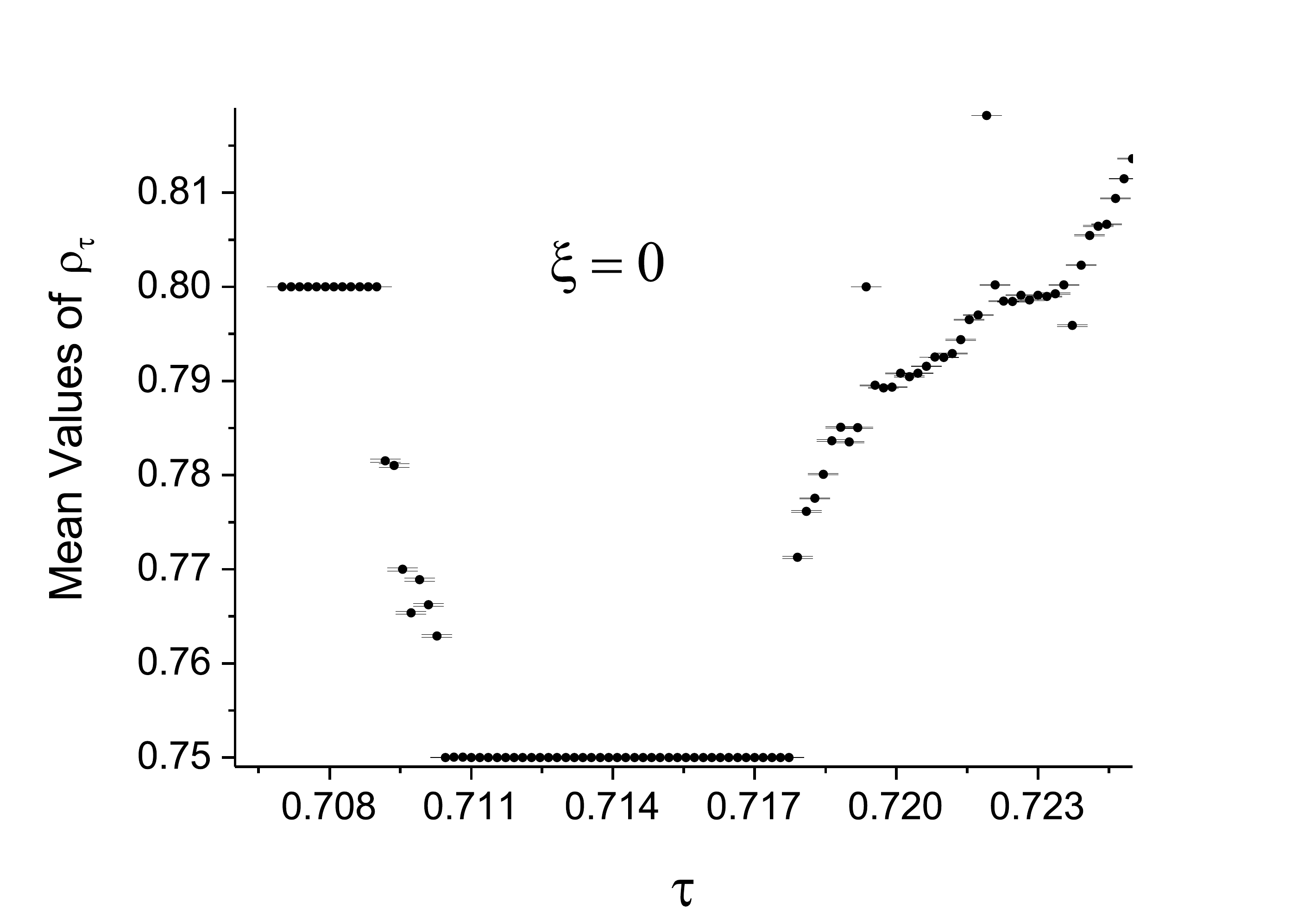}
\caption{Non monotonicity of the rotation number $\protect\rho_{\protect\tau%
} $ for $\protect\epsilon=1.4$ and in absence of noise ($\protect\xi=0$).
The black dots represent the mean values of $\protect\rho_{\protect\tau}$,
while the errorbars represent the corresponding standard deviations.}
\label{fig:nonoise}
\end{figure}

\newpage

\section{Concluding remarks}

\label{sec:conclude} {\ In this paper, we have studied Arnold circle maps
with strong nonlinearity and additive noise. We have proven rigorously that
the rotation number $\rho _{\tau }$ is differentiable with respect to the
parameter }$\tau ${, whenever the stochastically perturbed map is mixing,
providing a formula for the derivative of $\rho _{\tau }$ with respect to }$%
\tau ${. Moreover, we have shown, using a computer-aided proof, that $\rho
_{\tau }$ is not necessarily monotonic for $\epsilon \geq 1.$}

As outlined in the introduction, an important area of application of such
maps is the study of the El Ni\~{n}o--Southern Oscillation (ENSO)
phenomenon, which greatly affects seasonal-to-interannual climate
variability. The computational tools used in this paper are not directly
applicable to high-end, high-resolution global climate models (GCMs). But
there are three ways in which the results of this paper may shed light on
the behavior of such models and of the climate system itself.

First, the climate sciences have long relied on a systematic use of a
hierarchy of models, from the simplest ones, such as Arnold circle maps \cite%
{GCS08} --- through intermediate ones, such as those used to obtain Fig.~\ref%
{fig:bleacher} \cite{JNG94, JNG96} --- and on to the most detailed GCMs \cite%
{Ghil.2001, GR00, Held.2005, Sch.Dick.1974}. By applying systematically
advanced statistical methods \cite{Ghil.SSA.2002} to the simulations
produced by models of increasing detail and resolution, on the one hand, and
to observational data sets, on the other, it is possible to infer properties
of models at the top of the hierarchy and of the climate system itself,
given dynamical insights obtained with simple and intermediate models \cite%
{Ghil.2001, GR00}.

Second, linear response theory has played a crucial role in the arguments
developed in the present paper. This theory has already been applied fairly
widely in the climate sciences (\cite{Luu},\cite{GhLu},\cite{HM}). Linear
response theory was proved to apply generally in random systems in which the
presence of noise plays has a regulatizing effect (\cite{HM},\cite{GG}). We
can expect, therefore, that several of the general ideas presented herein
can be applied not only to very highly idealized models, like the Arnold
circle map, but to intermediate climate models as well, in the presence of
additive noise.

Third, methods of data-driven model building have been used to derive
relatively simple models directly from observational data sets and from
simulations of high-end models \cite{DAH.Arctic.2018, MSM.2015, KKG.2009,
MCP.1996}. With the greatly increased recent interest in such models in the
era of big data, this avenue will permit the formulation of more
sophisticated data-driven models that will be amenable to study by the full
set methods described and used herein.

%

\section{Acknowledgments}

The rigorous computations presented in subsections \ref{exp1} and \ref{exp2}
were performed on the supercomputer facilities of the M\'{e}socentre de
calcul de Franche-Comt\'{e}. \newline
JS was supported by the European Research Council (ERC) under the European
Union's Horizon 2020 research and innovation program (grant agreement No.
787304).

The present paper is TiPES contribution \#4; this project has received
funding from the European Union's Horizon 2020 research and innovation
program under grant agreement No. 820970.

\end{document}